\documentclass[11pt]{article}
\usepackage{amsmath}
\usepackage{hyperref}
\usepackage{pdfsync}
\usepackage{dsfont}
\usepackage{color}
\usepackage{esint}
\usepackage{amsthm}
\usepackage{enumerate}
\usepackage{amsfonts}
\usepackage{amssymb}
\usepackage{mathtools}
\usepackage{braket}
\usepackage{bm}
\usepackage{amsmath}
\usepackage{graphicx}
\usepackage{caption}
\usepackage{fullpage}
\usepackage{todonotes}
\usepackage{subcaption}

\usepackage[font=footnotesize,labelfont=bf]{caption}

\usepackage{mathtools}
\numberwithin{equation}{section}

\usepackage{amsfonts}
\usepackage{graphicx}
\setcounter{MaxMatrixCols}{30}
\providecommand{\U}[1]{\protect\rule{.1in}{.1in}}
\newtheorem{theorem}{Theorem}[section]

\newtheorem{corollary}[theorem]{Corollary}

\newtheorem{definition}[theorem]{Definition}
\newtheorem{example}[theorem]{Example}

\newtheorem{lemma}[theorem]{Lemma}

\newtheorem{proposition}[theorem]{Proposition}
\newtheorem{remark}[theorem]{Remark}

\allowdisplaybreaks[3]

\newcommand{\R}{\mathbb{R}}

\DeclareMathOperator{\sgn}{sgn}

\DeclareMathOperator{\argmin}{argmin}

\title{Tukey Depths and Hamilton-Jacobi Differential Equations}
\author{Martin Molina-Fructuoso and Ryan Murray}
\begin{document}
\maketitle

\begin{abstract}
  Widespread application of modern machine learning has increased the need for robust statistical algorithms. This work studies one such fundamental statistical measure known as the Tukey depth. We study the problem in the continuum (population) limit. In particular, we derive the associated necessary conditions, which take the form of a first-order partial differential equation. We discuss the classical interpretation of this necessary condition as the viscosity solution of a Hamilton-Jacobi equation, but with a non-classical Hamiltonian with discontinuous dependence on the gradient at zero. We prove that this equation possesses a unique viscosity solution, and that this solution always bounds the Tukey depth from below. In certain cases we prove that the Tukey depth is equal to the viscosity solution, and we give some illustrations of standard numerical methods from the optimal control community which deal directly with the partial differential equation. We conclude by outlining several promising research directions both in terms of new numerical algorithms and theoretical challenges.
\end{abstract}

\section{Introduction}
The success of statistical learning algorithms in recent years has often been tempered by a lack of understanding of their robustness. Questions of an algorithm's robustness or reliability is essential as we embed them into more parts of society, and is also intimately related to transparency, privacy, and fairness. These questions have sparked significant research activity and advances, and have significantly broadened the power and applicability of learning algorithms.

Classical statistics has, of course, invested significant effort in studying robustness. The most basic example of a robust statistic in one dimension is the \emph{median}. The median is robust in the sense that one has to move a significant proportion of data points in order to appreciably change the median. Similar arguments hold for the quantiles and quantile depths, namely the functions $Q(t) = \mu((-\infty,t))$ and $D(t) = \min ( \mu( (-\infty,t)),\mu(t,\infty) )$, where $\mu$ is an absolutely continuous measure on $\R$. These quantities are easily interpretable, and are robust to noise, outliers, and adversarial modification, all characteristics which are desirable for statistical learning algorithms.

There are many different ways to extend quantiles and depths to higher dimensions, and this work considers one such extension. Specifically, we consider the classical \emph{Halfspace} or \emph{Tukey depth}, which is defined by
\[
  T(x) := \inf\{ \mu(H) : H \text{ is a closed halfspace containing } x\}.
\]
This depth function is perhaps one of the most natural generalizations of one dimensional depths, and serves as an important prototype for many more sophisticated algorithms and ideas, both in the context of depth functions and more generally in robust statistical learning theory.

These depth functions have myriad applications in both pure and applied problems. For example, depth functions are used for 
robust treatment of outliers in estimation \cite{zuo2003projection} and classification \cite{jornsten2004clustering}, providing non-parametric statistical measures \cite{wang2005nonparametric}, creating higher-dimensional box plots \cite{rousseeuw1999bagplot}, and in functional data analysis \cite{cuevas2007robust}. They can be used to define generalized medians \cite{zuo2000general}, which provide a highly robust summary statistic in higher dimensions. Finally, they are objects of study in convex geometry, with applications to sampling theory, differential equations, and affine geometry \cite{nagy2019halfspace}. As robustness is a critical concern in data science applications, we posit that a deeper understanding of these depth functions and algorithms to compute them will also provide an avenue for the application of these ideas to more contemporary data problems.

Given a discrete measure described by a finite sample of points $\{x_1 \ldots x_d\}$, $x_i \in \R^n$, computation of the Tukey depth is a challenging problem. This is essentially related to the fact that any exact solution method must execute a combinatorial search over many possible hyperplanes defined by the $x_i$'s (a summary of some relevant literature is given in Section \ref{sec:lit}). The discrete nature of the points makes it difficult to extrapolate an optimal hyperplane from one point to other nearby points. Various heuristics have been proposed to improve computational efficiency of Tukey depths for finite point clouds, but the methods and analysis are often challenging.

In this paper, we study the population level problem, namely we consider the Tukey depth for an absolutely continuous measure $\mu$ (with density $\rho$). The Tukey depth of such a measure will satisfy necessary conditions which take the form of differential equations. In particular, we demonstrate that the Tukey depth formally satisfies the partial differential equation
\begin{equation}
  |\nabla T(x)| = \int_{(y-x) \cdot \frac{\nabla T}{|\nabla T|} = 0} \rho(y) d \mathcal{H}^{n-1}(y),
\label{eqn:IntroTukeyPDE}
\end{equation}
where $T$ is a function $T:\mathbb{R}^n \rightarrow \mathbb{R}$ and $d \mathcal{H}^{n-1}$ is the $n-1$-th dimensional Hausdorff measure.
This differential equation offers a potential avenue for leveraging local information to approximate the Tukey depth. However, the proper interpretation of this equation, and indeed whether it characterizes the Tukey depth, is not a simple matter. This work offers a first step in this direction, by linking the study of Tukey depths with the theory of viscosity solutions of partial differential equations. This formulation provide a new means of approximating Tukey depths. We provide a few numerical illustrations of this technique and its promise, but we set aside the important question of how to extend this formulation to empirical measures for future work.

On an informal level our contributions are as follows:
\begin{enumerate}
\item We identify a first-order partial differential equation \eqref{eqn:IntroTukeyPDE} that is satisfied by the Tukey depth at points where it is smooth. This equation is similar to the Eikonal equation from geometry, but possesses a ``right-hand side'' that does not fit with the classical theory.
\item We propose using the classical framework of viscosity solutions for Hamilton-Jacobi equations, which includes Eikonal equations, in order to recover well-posedness of those differential equations, and in order to characterize $T$ at any points where the Tukey depth is not differentiable. To remain self-contained, we do not assume the reader that is familiar with the theory of Hamilton-Jacobi equations. As the particular form of our differential equation is not covered by the standard theory, we extend the classical theory as needed, providing complete proofs.
\item We demonstrate that the proposed viscosity solutions always provide a lower bound for the Tukey depth. We provide non-trivial examples where the Tukey depth is and is not a viscosity solution to the differential equation, and then we provide one sufficient condition, for uniform distributions on convex domains in two dimensions, where Tukey depths are characterized as viscosity solutions. We conjecture a wide class of cases where the two may be equal.
\end{enumerate}
The organization of the work is as follows. In Section \ref{sec:lit} we review the relevant literature. In Section \ref{sec:Derivation} we offer the formal derivation of the differential equation. In Section \ref{sec:visc-sol} we identify why one needs a selection criteria to use this differential equation, and then develop the viscosity solution theory for this equation. In Section \ref{sec:depths-as-visc} we then prove that the viscosity solution provides a lower bound, and characterize one sufficient condition where the viscosity solution and the Tukey depth match. In Section \ref{sec:illustrations} we provide some basic numerical illustrations of using this formulation to approximate Tukey depths in cases where analytic solutions are known.

\subsection{Related Literature}\label{sec:lit}
Tukey depths were introduced in a number of early works \cite{Tukey-1975,hodges1955bivariate,Hotelling}.  Several basic properties of depth functions in the population setting, such as affine invariance, convexity of level sets, and some basic properties of minimizing hyperplanes at maxima, are given in \cite{rousseeuw1999depth}. Subsequently, these depth functions received attention due to their robustness properties. In particular, statistics based upon Tukey depths enjoy a high breakdown point \cite{donoho1992breakdown}, meaning that a large proportion of the data must be moved in order to appreciably affect the depth function. Later extensions of these concepts and properties to more general depth functions can be found in \cite{zuo2000general,zuo2003projection}. Exact solutions for several canonical examples such as uniform distributions on regular polygons and independent Gaussians and Cauchy distributions, can be found in \cite{rousseeuw1999depth}. Asymptotic consistency of depth functions in the empirical to population limits was studied in a number of works \cite{zuo2004stahel,ma2011asymptotic,nolan1992asymptotics,masse2004asymptotics}. Finally, there has been significant recent interest in generalized depth functions based upon Wasserstein distances \cite{chernozhukov2017monge}.

The computational geometry community has invested a significant effort in computational methods for the halfspace depth. Most exact computational formulations for the halfspace depth require evaluating empirical measures over many possible halfspaces. This can be cast as a combinatorial optimization problem, which generally does not scale well in dimension. For example, exact computation of the boundary of a level set of the Tukey depth of $n$ points in dimension $d$ has (neglecting log terms) $O(n^{d})$ time complexity \cite{liu2019fast,paindaveine2012computing}; this is one of the reasons that these methods are typically restricted to settings where $d$ is very small, i.e. 2 or 3. The computational complexity of methods for computing an exact solution to this problem are described in \cite{Chen-thesis}. Various heuristic or approximate approaches have  been proposed, including random projection methods \cite{cuesta2008random}, random sampling methods \cite{chen2013absolute} and geometric approximation algorithms \cite{bogicevic2018approximate}. One classical reference \cite{matousek1991computing} describes a $O(n)$ approximation algorithms for fixed error $\delta$, but is quick to point out that the big Oh notation obscures the dependence on $\delta$ in ways that are crucially important in practical computations. One such example, is \cite{chen2013absolute}, which lists a $O(\delta^{1-d} n)$ run time.  Although these approximation algorithms and heuristics offer some avenues of attack, computation of the Tukey depth is still a challenging problem.

In this paper, we consider an alternative approach. In particular, we notice that at the population level the necessary conditions take the form of a first-order partial differential equation, similar in form to the classical Eikonal equation. Equations of this type have been studied extensively in the context of optimal control; monographs introducing the topic include \cite{bardi2008optimal,katzourakis2014introduction}. Some works tackle the problem of HJB equations with right-hand sides that are not smooth \cite{elliott2004uniqueness,ishii1985hamilton}, but those works do not apply in our situation where the dependence is non-smooth in the gradient of the solution. A variety of efficient computational methods for viscosity solutions are also available. In particular, level set and fast marching methods \cite{sethian1999level,sethian1999fast} provide elegant, efficient means for approximating viscosity solutions. These methods have been extended to unstructured grids and graphs \cite{sethian2000fast,desquesnes2010efficient,desquesnes2013eikonal}, which suggests possible application to graphs based upon empirical distributions.

One line of work that is similar in spirit to ours is the work in \cite{calder2014hamilton,calder2015pde,calder2020limit,cook2020rates}. These works utilize Hamilton-Jacobi equations to study a variety of measures of depths among data points. For example, \cite{calder2020limit} studies the convex hull depth, an alternative to the Tukey depth, and characterizes a Hamilton-Jacobi equation that is satisfied in a continuum limit. Earlier works on Pareto fronts \cite{calder2014hamilton,calder2015pde} also identify appropriate Hamilton Jacobi equations to numerically approximate these fronts, which can be thought of as a type of ``depth'' for multi-objective optimization. Recent work \cite{cook2020rates} also quantifies the statistical consistency of such algorithms. These works complement ours, in that they, in the context of geometrical learning problems distinct from ours, demonstrate the promise of utilizing limiting Hamilton-Jacobi equations to understand properties of difficult geometric and combinatorial optimization problems.

Finally, there has also been a parallel literature within the convex geometry community studying \emph{convex floating bodies}, which is simply another name for the Tukey depth. These floating bodies were used to define a generalized notion of affine surface area, namely a notion of surface area that is invariant under volume preserving affine transformations \cite{schutt1990convex}. These affine surface areas enjoy a number of applications in sampling theory, differential geometry and information theory. An in depth review on the topic, with comparisons to the theory of Tukey depths, is given in \cite{nagy2019halfspace}.

\section{Formal derivation of partial differential equation} \label{sec:Derivation}

We consider a probability distribution on $\mathbb{R}^n$ with an absolutely continuous probability density $\mu$, that is, for any Borel set $E \subset \mathbb{R}^n$ we have $\mu(E)= \int_E \rho(x) dx$ for some integrable (in the sense of Lebesgue) function $\rho$. Additionally, we require that the support of $\rho$ is on $\overline{\Omega}$ for some open and bounded set $\Omega \in \mathbb{R}^n$. For concreteness, we restrict our attention to the bounded support case, but more general cases can be treated with appropriate modifications of the arguments in this work.
	
	We define the Tukey depth (also referred to in the literature as the halfspace depth) of a point $x \in \mathbb{R}^n$ in the following manner:
	\begin{equation}
	T(x) := \inf\{ \mu(H) : H \text{ is a closed halfspace containing $x$}\}.
	\label{eqn:Tukey-definition}
	\end{equation}
	
	We now formally derive a partial differential equation satisfied by the Tukey depth $T$, which can be roughly interpreted as a necessary condition for the Tukey depth minimization problem. This equation will be satisfied at any point that the Tukey depth is smooth. Furthermore, the viscosity solution formulation of these equations will turn out to also characterize the behavior of $T$ near singular points: further detail on the definition of viscosity solutions and their application to Tukey depths will be given in subsequent sections.
	
	Let us define the following function for $x \in \R^n$ and $\nu \in \mathbb{R}^n$:
	\begin{equation}
	Z(x,\nu) = \mu(\{ y: y \cdot \nu \geq x \cdot \nu\}).
	\label{eqn:massHalfspace}
	\end{equation}
	
	This function indicates the amount of mass of $\mu$ that lies on one side of the hyperplane that contains the point $x$ and is perpendicular to the direction $\nu$. We notice that the Tukey depth, according to Definition \eqref{eqn:Tukey-definition}, can be written in terms of the function $Z$ as
	\begin{equation}
	\label{eqn:Tukey-def-min}
	T(x) = \inf_{\nu \in \mathbb{S}^{n-1}} Z(x,\nu),
	\end{equation}
	where $S^{n-1}$ is the hypersphere in dimension $n-1$. In other words, we are restricting the minimization problem to the set of vectors $\nu \in \mathbb{R}^n$ with $|\nu|=1$.
	
	Let us assume that the mapping $\argmin Z(x,\cdot)$ is single valued and smooth in some neighborhood, and let us denote its value by $\nu(x)$, i.e., $T(x)=Z(x,\nu(x))$ locally. Then, assuming that $Z$ is smooth in both variables, by the chain rule we compute the total derivative of $T$:
	\[
	\nabla T = D_x Z + D_{\nu} Z \cdot D \nu .
	\]
	
	We can see $T$ as the minimization of $Z(x,\cdot)$ under the constraint $|\nu|=1$. According to the method of Langrange multipliers, at a minimizer we must have $D_{\nu} Z = k(x) \nu$ for some $k:\mathbb{R}^n \rightarrow \mathbb{R}$. We then have that
	\begin{equation}
	\label{eqn:gradT}
	\nabla T = D_x Z + k \nu \cdot D \nu = D_x Z + \frac{1}{2} k \, D (|\nu|^2) = D_x Z,
	\end{equation}
	where the second term in the equation vanishes because of the restriction $|\nu|=1$. 
	
	Let us note that the derivative of $Z(x,\nu)$ in the direction of a unit vector $h$ can be calculated directly from the definition:
	\begin{equation}
	\begin{array}{ccl}
	D_h Z (x,\nu) & =  & \displaystyle \lim_{t \rightarrow 0} \frac{Z(x+th,\nu)-Z(x,\nu)}{t}  =   - \sgn (h \cdot \nu) \lim_{t \rightarrow 0} \displaystyle \frac{1}{t} \int_{(x+th) \cdot \nu \geq y \cdot \nu \geq x \cdot \nu} \rho(y) dy \\
	& = & - \sgn (h \cdot \nu) \displaystyle \lim_{t \rightarrow 0} h \cdot \nu  \frac{1}{t} \int_{0}^{t} \int_{(y-x- \tau  h) \cdot \nu = 0} \rho(y) d \mathcal{H}^{n-1} (y) d \tau \\ & = &  - \displaystyle h \cdot \nu \int_{(y-x) \cdot \nu =0} \rho(y) d \mathcal{H}^{n-1}(y) \mbox{,}
	\end{array} 
	\label{eqn:DirDerZ}
	\end{equation}
	For the derivation of this expression we used the coarea formula for the change of variables $\displaystyle f(y)=\frac{(y-x) \cdot \nu}{h \cdot \nu}$, whose level sets $f(y)=c$ are planes with normal $\nu$ passing through the point $x+ch$.
	Under the assumption that $Z$ is differentiable, we then have the following expression for the differential of $Z$
	\begin{equation}
	D_x Z(x,\nu)= - \displaystyle \nu \int_{(y-x) \cdot \nu =0} \rho(y) \,d\mathcal{H}^{n-1}(y) \mbox{.}
	\end{equation}
	In view of Equation  \eqref{eqn:gradT} we have that
	\begin{equation}
	\nabla T = - \displaystyle \nu \int_{(y-x) \cdot \nu =0} \rho(y) \,d\mathcal{H}^{n-1}(y) \mbox{,}
	\label{eqn:Tukey-grad}
	\end{equation}
	from which it follows that $\nu=-\frac{\nabla T}{|\nabla T|}$. After multiplying Equation \eqref{eqn:Tukey-grad} by $\frac{\nabla T}{|\nabla T|}$ we obtain that
	\begin{equation}
	|\nabla T|  = \displaystyle \frac{\nabla T}{|\nabla T|} \cdot \frac{\nabla T}{|\nabla T|} \int_{(y-x) \cdot \frac{\nabla T}{|\nabla T|} =0} \rho(y) \,d\mathcal{H}^{n-1}(y) = \int_{(y-x) \cdot \frac{\nabla T}{|\nabla T|} =0} \rho(y) \,d\mathcal{H}^{n-1}(y) .
	\end{equation}
	
	Hence we find that the equation that the Tukey depth $T$ formally satisfies the differential equation
	\begin{equation}
	\label{eqn:TukeyEqn}
	|\nabla u|  =  \int_{(y-x) \cdot \frac{\nabla u}{|\nabla u|} =0} \rho(y) \,d\mathcal{H}^{n-1}(y),
	\end{equation}
	where we have denoted the unknown function by $u$ as it is customary in the theory of partial differential equations. This notation also emphasizes that we still need to determine under which assumptions and in what sense the Tukey depth is a solution of \eqref{eqn:TukeyEqn}.

	We mention that there are other ways to derive this type of differential equation. In particular, one can use the method of characteristics to derive a system of ordinary differential equations which is equivalent to \eqref{eqn:TukeyEqn} in regions where the solution is smooth. However, resolving the singularities is often very challenging within the context of the characteristic equations \cite{albano2002propagation}, and hence we do not pursue that formalism here.
	
	\begin{example} \label{ex:1D-Eikonal}
	  Consider the case where $d=1$, and $\rho(x) \equiv 1$ on the interval $[0,1]$ (namely the uniform distribution on $[0,1]$. In this case the Tukey depth is given by
	    \begin{displaymath}
	      T(x) = \begin{cases} \min(x,1-x) &\text{ for } x \in [0,1] \\ 0 &\text{otherwise.} \end{cases}
	    \end{displaymath}
	    Here Equation \eqref{eqn:TukeyEqn} reduces to $|u_x| = \rho(x) = 1$. One immediately verifies that $T$ satisfies this equation at every point where it is differentiable.
	  \end{example}
	  Indeed, if we consider a probability distribution in one dimension which has density $\rho$, supported on $[a,b]$, and so that $\rho$ is continuous on $(a,b)$, then the Tukey depth will solve the boundary value problem
\begin{equation}
	\label{eqn:Tukey-1D}
	|u_x|-\rho(x) = 0, \qquad u(a) = u(b) = 0
	\end{equation}
	at every point except for on the boundary of the set of medians.	

	\section{Definitions and properties of viscosity solutions}\label{sec:visc-sol}
	
	In Example \ref{ex:1D-Eikonal}, we saw that the Tukey depth solved the PDE \eqref{eqn:TukeyEqn} at almost every point. In many cases solving a PDE almost everywhere is enough to characterize the solution (for example in the context of weak solutions to elliptic PDE). However, in the context of first-order equations this is not the case, as evidenced by the following example.
	\begin{example} \label{ex:1D-inifinite-sol}
		Consider again the uniform distribution on $[0,1]$. Then there are multiple functions which satisfy \eqref{eqn:Tukey-1D} almost everywhere on $[0,1]$. For example, both $T$ and $-T$ satisfy the equation at any point where they are differentiable. Indeed, one can actually construct an infinite number of functions that satisfy the equation almost everywhere by considering functions of the form
		
		\begin{equation}
		u_m(x) = \left \{
		\begin{array}{lcl}
		\displaystyle x - \frac{k-1}{m} & \mbox{ if } & x \in \displaystyle \left[ \frac{k-1}{m}, \frac{k-1}{m} + \frac{1}{2m}\right] \\
		\displaystyle \frac{k}{m} - x & \mbox{ in } & \displaystyle x \in \left[ \frac{k}{m}-\frac{1}{2m}, \frac{k}{m} \right] \qquad \qquad k=1, \ldots m
		\end{array}
		\right.
		\label{eqn:InftySols1D}
		\end{equation}
	\end{example}
	\begin{figure}[h]
		\centering
		\includegraphics[width=0.75\textwidth]{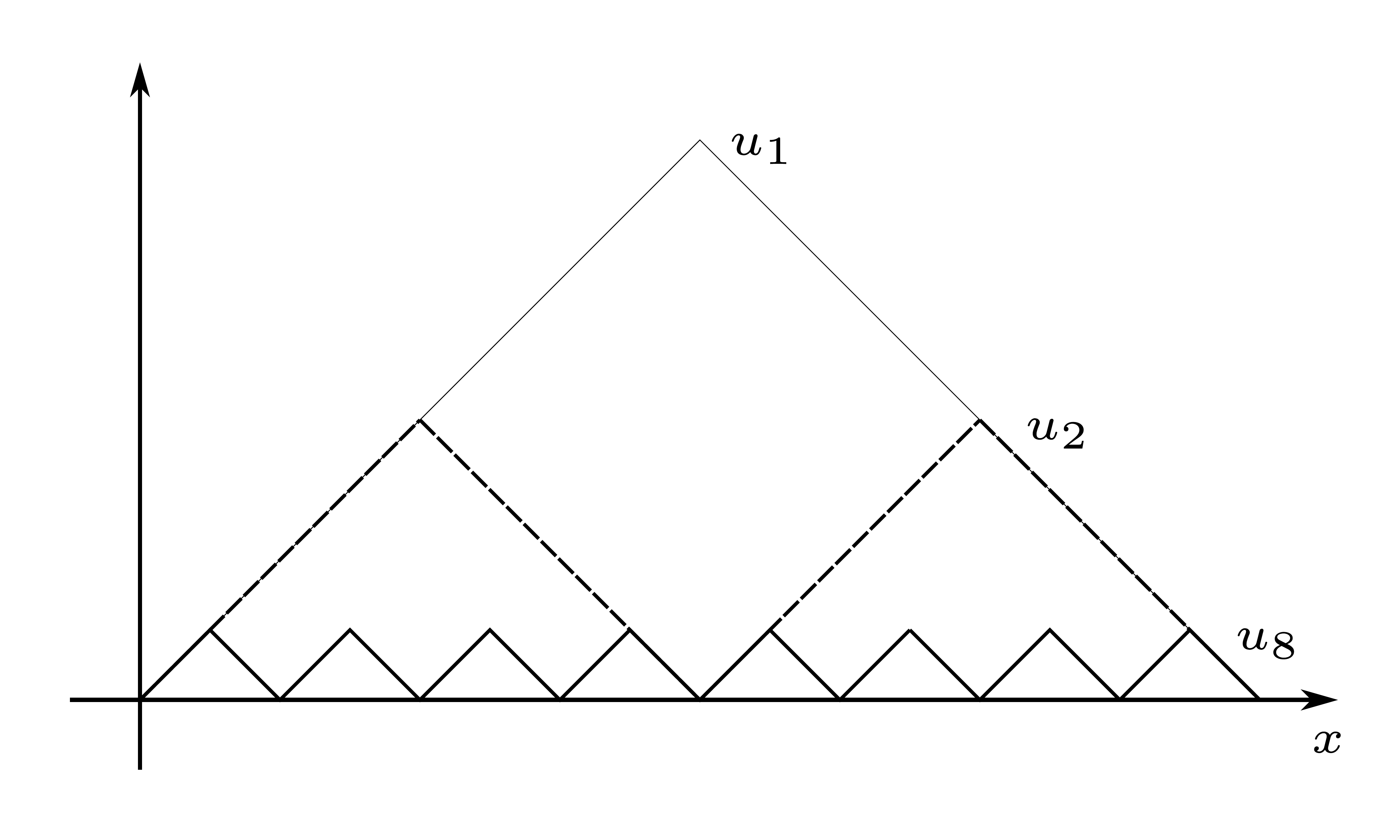}
		\caption{Shown here are some of the infinitely many functions given by formula \eqref{eqn:InftySols1D}. They all satisfy equation \eqref{eqn:Tukey-1D} almost everywhere on $[0,1]$ for the uniform distribution.}
	\end{figure}
	
	The previous example demonstrates that the ``almost everywhere'' solution formulation (or weak solution formulation) is not so useful in the context of these types of first-order PDE. This led to the (well-developed) theory of viscosity solutions, which we describe here to keep the exposition self-contained.
	
	\subsection{Classical theory of viscosity solutions}
	In order to rule out this undesirable situation when defining what we should understand by a solution of \eqref{eqn:Tukey-1D}, we draw upon the concept of viscosity solutions \cite{crandall1983viscosity}. Building upon Example \ref{ex:1D-Eikonal}, we define
	\begin{equation}
	H(x,p)= |p| - 1 ,
	\end{equation}
	for $(x,p) \in \mathbb{R}^2$.
	
	We can rewrite equation \eqref{eqn:Tukey-1D} as $H(x,D u) = 0$. We then need to define what this means at points where $u$ is not differentiable, and do so in a way that selects the ``correct'' solution (i.e. the Tukey depth) and precludes the spurious solutions from Example \ref{ex:1D-inifinite-sol}.
	
	We notice that the Tukey depth in this example has no ``downward pointing corners'', and the upward pointing corner is found by taking a minimum of two classical solutions to the differential equation. Indeed, the Tukey depth is the only weak (i.e. almost everywhere solution) of the differential equation with those properties that satisfies the $u(0) = u(1) = 0$. We thus only need to create a notion of generalized solution which selects these corners in an appropriate way. To do so, we recall the definition of the super- and subdifferential:
	
	\begin{equation}
	\begin{array}{ccl}
	  D^+ u (x) & = & \left\{p \in \mathbb{R}^n \left| \displaystyle \lim_{y \rightarrow x } \sup_{y \in \Omega} \frac{u(y)-u(x)-p \cdot (y-x)}{|x-y|} \leq 0 \right. \right\} \\ \\
	  D^- u (x) & = & \left\{p \in \mathbb{R}^n \left| \displaystyle \lim_{y \rightarrow x } \inf_{y \in \Omega} \frac{u(y)-u(x)-p \cdot (y-x)}{|x-y|} \geq 0 \right. \right\} \\
	\end{array}
	\label{eqn:semidiffs}
	\end{equation}
	
	If a vector $p$ is in $D^+u$ ($D^-u$) then we can understand it as the slope of a plane (or line) which locally supports the function from above (below). Furthermore, if for some $x$, $D^+(x)$ and $D^-(x)$ are nonempty then $u$ is differentiable at $x$ \cite{bardi2008optimal}.
	
		In order to formalize the concept of viscosity solutions, we recall the definitions of lower and upper semicontinuous functions:
		
		\begin{definition}
			A function $u:\mathcal{O} \subset \mathbb{R}^K \rightarrow \mathbb{R}$ is called upper semicontinuous at $x \in \mathcal{O}$ if
			\begin{equation}
			\inf_{r > 0} \,\, \sup_{y \in \mathcal{O} \cup B(x,r)} u(y)=:\limsup_{y \rightarrow x} u(y) \leq u(x).
			\end{equation}
			Similarly, $u:\mathcal{O} \subset \mathbb{R}^K \rightarrow \mathbb{R}$ is called lower semicontinuous at $x \in \mathcal{O}$ if
			\begin{equation}
			u(x) \leq \liminf_{y \rightarrow x} u(y):= \sup_{r > 0} \,\, \inf_{y \in \mathcal{O} \cup B(x,r)} u(y).
			\end{equation}	
		\end{definition}
		
		We will also need to introduce the concepts of lower and upper semi continuous envelopes.
		
		\begin{definition}
			The upper semicontinuous envelope $u^*$ of a function $u:\mathcal{O} \subset \mathbb{R}^K \rightarrow \mathbb{R}$ is defined as
			\begin{equation}
			u^*(x)=\limsup_{y \rightarrow x} u(y),
			\end{equation}
			and the lower semicontinuous envelope $u_*$ of a function $u:\mathcal{O} \subset \mathbb{R}^K \rightarrow \mathbb{R}$ is defined as
			\begin{equation}
			u_*(x)= \liminf_{y \rightarrow x} u(y)
			\end{equation}
		\end{definition}
		
		For any function $u$, we have from the definitions that $u^*$ and $u_*$ are respectively upper and lower semicontinuous and that $u_* \leq u \leq u^*$. It is also immediate that $u^*$ is the smallest upper semicontinuous function larger than or equal to $u$, and that $u_*$ is the largest lower semicontinuous function smaller than or equal to $u$. 
		
	Having defined the super and sub-differentials and the semicontinuous envelopes of a function, we may now state a definition for viscosity solutions. This definition is slightly more general than those given in classical references \cite{bardi2008optimal}, as we need to apply the theory for $H$ that is not continuous. References which use this definition include Katzourakis \cite{katzourakis2014introduction} and Ishii \cite{ishii1985hamilton}, but those references do not directly handle the types of discontinuities that we have in our context, and hence we present the necessary adaptation of the theory in detail. Our presentation follows that of \cite{calder2018lecture} and \cite{bardi2008optimal} for continuous $H$.

	\begin{definition}
		\label{def:viscositySol}
		An upper semicontinuous function $u$ is a subsolution to $H(x,u,Du)=0$ if
		\begin{equation} \label{eqn:sub-sol-ineq}
		H_*(x,p) \leq 0 \mbox{ for all } x\in \Omega \mbox{ and all } p \in D^+u(x) \mbox{,}
		\end{equation}
		where $H_*$ is the lower semicontinuous envelope of $H$. Similarly, we call a lower semicontinuous function $u$ a supersolution to $H(x,u,Du)=0$ if 
		\begin{equation}
		H^*(x,p) \geq 0 \mbox{ for all } x\in \Omega \mbox{ and all } p \in D^-u(x) \mbox{,}
		\end{equation}
		
		where $H^*$ is the upper-semicontinuous envelope of $H$. Finally, a continuous function $u$ is called a \emph{viscosity solution} if it is both a subsolution and a supersolution.
	\end{definition}
	In the context of Example \ref{ex:1D-Eikonal}, the supersolution concept precludes any downward pointing corners, as at a downward corner $0 \in D^- u$ and $H(x,0) =-1< 0$. The supersolution concept also requires that $|u_x| \geq 1$ anywhere that it exists. Similarly, the subsolution concept enforces that $|u_x| \leq 1$ anywhere that it exists. These points together turn out to guarantee that the only function which is a viscosity solution (i.e. which is both a super and subsolution) is given by the Tukey depth (in one dimension).
	
	The notion of viscosity solutions is notably dependent the sign chosen: if we replaced $H(x,p) = 0$ with $-H(x,p) = 0$ we would obtain a completely different viscosity solution; in the context of Example \ref{ex:1D-Eikonal} we would obtain $-T$. We also mention that, in cases where viscosity solutions exist they may be obtained as a limit of ``regularized'' PDES, namely as the limit of solutions to the elliptic equations $H(x,\nabla u) = \varepsilon \Delta u$ as $\varepsilon \to 0$: this is the origin of the name ``viscosity solution'', and offers many theoretical and numerical tools for finding solutions of the same \cite{crandall1983viscosity}. 
	
	Finally, there is any equivalent formulation of viscosity solutions, which is more convenient for certain proofs and computations, and which we give here:
	
	\begin{definition}
		\label{def:viscositySolEquiv}
		We say that an upper semicontinuous (lower semicontinuous ) function $u$ is a subsolution (supersolution) of the equation $H(x,D u)=0$ if 
		\begin{equation}
		\begin{array}{c}
		H_* (x, D \varphi (x)) \leq 0 \\
		(H^*(x, D \varphi (x)) \geq 0)
		\end{array}
		\end{equation}
		for any $\varphi \in \mathcal{C}^1(\Omega)$ such that $u - \varphi$ attains a local maximum (local minimum) at $x$. 
	\end{definition}
	More explicitly, in the context of Equation \eqref{eqn:TukeyEqn} we have that
	\begin{equation}
	\begin{array}{ccl}
	H_*(x, D \varphi(x)) & := & \displaystyle \liminf_{q \rightarrow D \varphi (x)}  \left[ |q| - \int_{q \cdot (\xi - x) =0} \rho (\xi) \,d\mathcal{H}^{n-1} \right] \\
	& = & \displaystyle |D \varphi(x)| - \limsup_{q \rightarrow D \varphi(x)} \int_{q \cdot (\xi - x) =0} \rho (\xi) \,d\mathcal{H}^{n-1}
	\end{array}
	\end{equation}
	and
	\[
	H^*(x,D \varphi(x)) = |D \varphi(x)| - \liminf_{q \rightarrow D \varphi(x)} \int_{q \cdot (\xi - x) =0} \rho (\xi) \,d\mathcal{H}^{n-1} .
	\]
	
	\subsection{Properties of viscosity solutions: well-posedness and maximum principle}
	\label{sec:maxprinciple}
	
	One of the main advantages of the viscosity solution approach is that it recovers the existence and uniqueness that is lost when we only consider weak solutions to the first-order PDE. This well-posedness is a consequence of the \emph{comparison principle}, which is known to hold for the viscous approximations (i.e. when $\varepsilon \Delta u$ is added to the equation), and which is inherited by the viscosity solutions. In this section we will establish the maximum principle within the context of \eqref{eqn:TukeyEqn}, following the classical comparison principle for Hamilton-Jacobi equations and modifying the proofs as needed. The consequence of that maximum principle will be the following well-posedness result for Equation \eqref{eqn:TukeyEqn}:
	
	\begin{theorem}
		\label{th:exuniq}
		Consider the equation
		\begin{equation}
		H(x,D u) = |D u|  -  \int_{(y-x) \cdot D u =0} \rho(y) \,d\mathcal{H}^{n-1}(y) =0,
		\label{eqn:HequationTukey}
		\end{equation}
		where $\rho$ is a uniformly continuous function with support $\overline{S}$ for an open and bounded set $S \subset \mathbb{R}^n$, where $\Omega= co(S)$, the convex hull of $S$. Then there is a unique viscosity solution $u$ of this problem that satisfies the boundary condition $u=0$ on $\partial \Omega$, if we assume that
		\begin{equation}
		\label{eq:sliceassumption}
		\int_{p \cdot (\xi - x) = 0} \rho (\xi) \,d\mathcal{H}^{n-1} \geq \delta(x) \mbox{\qquad for all \quad}  x \in \Omega
		\end{equation}
		for any $p \in \mathbb{R}^n$, with $\delta(x)=0$ only if $x \in \partial \Omega$.
	\end{theorem}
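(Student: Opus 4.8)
The plan is to deduce both existence and uniqueness from a comparison principle of the form: if $u$ is a subsolution and $v$ a supersolution with $u \le v$ on $\partial\Omega$, then $u \le v$ throughout $\Omega$. Uniqueness is then immediate (if $u_1,u_2$ are solutions vanishing on $\partial\Omega$, apply comparison twice), and existence follows from Perron's method once comparison and suitable barriers are available. The entire difficulty is concentrated in the comparison principle, and within it in two structural features of $H(x,p)=|p|-R(x,p)$, where I write $R(x,p)=\int_{(\xi-x)\cdot p=0}\rho\,d\mathcal{H}^{n-1}$: first, $R$ is discontinuous in $p$ at $p=0$, depending only on the direction $p/|p|$; second, $H$ carries no zeroth-order term, so the naive doubling argument does not close. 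The positivity hypothesis \eqref{eq:sliceassumption} is exactly what resolves both issues.

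Before the doubling argument I would record three facts about $R$. (a) Since $\rho$ is uniformly continuous with bounded support, $R$ is bounded, $R(x,p)\le C_R$; hence any subsolution gradient satisfies $|p|\le R(x,p)\le C_R$, so subsolution gradients stay bounded. (b) Writing $\nu=p/|p|$, $R(x,\nu)$ depends on $x$ only through $\nu\cdot x$, and translating the slicing hyperplane moves the integral by at most the maximal $\mathcal{H}^{n-1}$-area of a slice of $\overline{S}$ times the modulus of continuity of $\rho$; this yields a modulus $\omega$, \emph{uniform in the direction $\nu$}, with $|R(x,p)-R(y,p)|\le\omega(|x-y|)$ for every $p\ne 0$. (c) Because $R$ is invariant under positive scalings of $p$, for $\theta\in(0,1)$ one has $H(x,\theta p)=\theta|p|-R(x,p)$; hence if $u$ is a subsolution then $w_\theta:=\theta u$ is a \emph{strict} subsolution, with $H_*(x,q)\le(\theta-1)\delta(x)\le-(1-\theta)\delta_0<0$ for $q\in D^+w_\theta(x)$ and $x$ in a compact interior set on which $\delta\ge\delta_0>0$. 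This strictness, supplied directly by \eqref{eq:sliceassumption}, is the substitute for a zeroth-order term.

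With these in hand the comparison proof is the classical doubling of variables applied to $w_\theta$ and $v$. Suppose $M:=\max_{\overline\Omega}(w_\theta-v)>0$; since $w_\theta=v=0$ on $\partial\Omega$ the maximum is interior, so for small $\varepsilon$ the maximizer $(x_\varepsilon,y_\varepsilon)$ of $w_\theta(x)-v(y)-|x-y|^2/(2\varepsilon)$ is interior, with $p_\varepsilon:=(x_\varepsilon-y_\varepsilon)/\varepsilon\in D^+w_\theta(x_\varepsilon)\cap D^-v(y_\varepsilon)$ and $|x_\varepsilon-y_\varepsilon|\to 0$. The crucial point is that $p_\varepsilon\ne 0$: if $p_\varepsilon=0$ the supersolution inequality would read $H^*(y_\varepsilon,0)=-\min_{\nu}R(y_\varepsilon,\nu)\ge 0$, contradicting \eqref{eq:sliceassumption} at the interior point $y_\varepsilon$. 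Since $p_\varepsilon\ne 0$, both envelopes coincide with the continuous $R$ near $p_\varepsilon$, so the strict subsolution gives $H(x_\varepsilon,p_\varepsilon)\le-(1-\theta)\delta_0$ while the supersolution gives $H(y_\varepsilon,p_\varepsilon)\ge 0$; subtracting and using (b),
\[
(1-\theta)\delta_0 \le H(y_\varepsilon,p_\varepsilon)-H(x_\varepsilon,p_\varepsilon)=R(x_\varepsilon,p_\varepsilon)-R(y_\varepsilon,p_\varepsilon)\le\omega(|x_\varepsilon-y_\varepsilon|)\to 0,
\]
a contradiction for small $\varepsilon$. Hence $w_\theta\le v$ for every $\theta\in(0,1)$, and letting $\theta\to 1$ gives $u\le v$.

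For existence I would run Perron's method in Ishii's semicontinuous formulation, which is tailored to discontinuous $H$. The constant $0$ is a subsolution vanishing on $\partial\Omega$ (indeed $H_*(x,0)=-\max_\nu R(x,\nu)\le 0$), and a supersolution barrier equal to $0$ on $\partial\Omega$ can be built from the distance to $\partial\Omega$ scaled by $C_R$, using the convexity of $\Omega=co(S)$ and its supporting hyperplanes to control behavior at the boundary. The supremum of all subsolutions lying below this barrier is then shown to be a viscosity solution in the usual way, and the comparison principle guarantees both that it attains the boundary data and that it is the unique such solution. I expect the main obstacle to be the comparison principle, and specifically the interplay at $p=0$: the argument works only because \eqref{eq:sliceassumption} simultaneously forces the doubling gradient away from the single point of discontinuity and provides, through the scaling $\theta u$, the strict inequality that a zeroth-order term would otherwise furnish. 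A secondary technical point is establishing the uniform-in-direction modulus in (b) and the boundary barriers, both of which lean on the uniform continuity of $\rho$ and the boundedness and convexity of the support.
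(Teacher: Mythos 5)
Your proposal is correct and, at its core, follows the same route as the paper. The comparison principle is proved exactly as in the paper's Theorem \ref{th:ComparisonPrinciple}: your scaling $w_\theta = \theta u$ is the paper's $u_k = (1-\varepsilon_k)u$, and in both arguments the strict subsolution property comes from the zero-homogeneity of the integral term $R(x,p)$ in $p$ combined with assumption \eqref{eq:sliceassumption}; the doubling-of-variables maximizer is forced to have $p_\varepsilon \neq 0$ because $H^*(y_\varepsilon,0) \leq -\delta(y_\varepsilon) < 0$ would contradict the supersolution property (the paper makes the identical observation); and the contradiction closes via the uniform-in-direction modulus of continuity $|R(x,p)-R(y,p)| \leq C\,\mathrm{diam}(\Omega)^{n-1}\omega(|x-y|)$, which is the paper's change-of-variables estimate. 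Existence is likewise by Perron's method in the semicontinuous formulation, with $0$ as a subsolution. The one genuine difference is the supersolution barrier: you construct an explicit barrier $C_R\,\mathrm{dist}(\cdot,\partial\Omega)$, which works because the distance function is concave on the convex set $\Omega = co(S)$ (so its viscosity subdifferential is nonempty only at points of differentiability, where its gradient has norm $C_R \geq R$), whereas the paper takes the Tukey depth $T$ itself as the barrier, invoking Corollary \ref{cor:TukeySuper} from Section \ref{sec:Tukeyaresuper}. Your choice makes the well-posedness theorem self-contained, with no forward dependence on the marginal-function analysis of Section \ref{sec:depths-as-visc}; the paper's choice buys the inequality $U \leq T$ directly from the construction, tying the solution to the Tukey depth from the outset. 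One point to tidy in a final write-up: your comparison argument (like the paper's) tacitly needs $\theta u \leq v$ on $\partial\Omega$, which is automatic for the zero boundary data of the theorem but not for the general statement "$u \leq v$ on $\partial\Omega$" when $u$ is negative somewhere on the boundary.
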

	
	To prove this theorem, we must first establish a comparison principle for viscosity solutions of equation \eqref{eqn:HequationTukey}.
	\begin{theorem}
		\label{th:ComparisonPrinciple}
		Let $u$ and $v$ be respectively a subsolution and a supersolution of \eqref{eqn:HequationTukey}. If $u \leq v$ on $\partial \Omega$ then $u \leq v$ in $\Omega$.
	\end{theorem}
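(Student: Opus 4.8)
The plan is to follow the Crandall--Lions--Ishii doubling-of-variables method, adapted to the two features that make the standard theory inapplicable: the Hamiltonian carries no zeroth-order dependence on $u$, and it is discontinuous in the gradient at $p=0$. Because $H(x,Du)=0$ has no explicit $u$-dependence, a naive doubling argument will not close, so the first step is to manufacture a \emph{strict} subsolution from $u$. The decisive structural fact is that the integral term $g(x,p):=\int_{(y-x)\cdot p=0}\rho(y)\,d\mathcal{H}^{n-1}(y)$ is positively $0$-homogeneous in $p$, depending only on the direction $p/|p|$. Hence for $\theta\in(0,1)$ the scaled function $u^\theta:=\theta u$ satisfies, at any $p\in D^+u^\theta(x)=\theta D^+u(x)$ with $p\neq 0$, the chain $|p|=\theta|p'|\le \theta\,g(x,p')=\theta\,g(x,p)$, so that $H_*(x,p)\le -(1-\theta)g(x,p)\le -(1-\theta)\delta(x)$ by assumption \eqref{eq:sliceassumption} (for $p=0$ the subsolution inequality is automatic since $g\ge 0$). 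Thus $u^\theta$ is a strict subsolution with a quantitative interior gap, while $u^\theta\to u$ as $\theta\uparrow 1$. I would argue by contradiction, assuming $M:=\sup_\Omega(u-v)>0$, and fix $\theta$ close enough to $1$ that $\sup_\Omega(u^\theta-v)>0$ as well.

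With the strict subsolution in hand, I would introduce the penalization $\Phi_\varepsilon(x,y)=u^\theta(x)-v(y)-|x-y|^2/(2\varepsilon)$ on $\overline\Omega\times\overline\Omega$. Since $u^\theta$ is upper semicontinuous and $v$ lower semicontinuous, $\Phi_\varepsilon$ attains a maximum at some $(x_\varepsilon,y_\varepsilon)$. The standard penalization lemma then yields $|x_\varepsilon-y_\varepsilon|^2/\varepsilon\to 0$ and that every limit point $\bar x$ of $(x_\varepsilon)$ realizes $\sup_\Omega(u^\theta-v)>0$; because $u^\theta\le v$ on $\partial\Omega$, the point $\bar x$ lies in the interior, so $x_\varepsilon,y_\varepsilon\in\Omega$ and $\delta(x_\varepsilon)\ge\delta_0>0$ for all small $\varepsilon$. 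Using the quadratic penalty as a $\mathcal{C}^1$ test function (Definition \ref{def:viscositySolEquiv}), the common slope $p_\varepsilon:=(x_\varepsilon-y_\varepsilon)/\varepsilon$ belongs both to $D^+u^\theta(x_\varepsilon)$ and to $D^-v(y_\varepsilon)$, so the sub- and supersolution inequalities give $H_*(x_\varepsilon,p_\varepsilon)\le -(1-\theta)\delta_0$ and $H^*(y_\varepsilon,p_\varepsilon)\ge 0$.

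It remains to extract a contradiction from these two inequalities, which share the \emph{same} slope $p_\varepsilon$. If $p_\varepsilon=0$, the supersolution inequality reads $-\liminf_{q\to 0}g(y_\varepsilon,q)\ge 0$, contradicting $g(y_\varepsilon,\cdot)\ge\delta(y_\varepsilon)>0$ from \eqref{eq:sliceassumption}; hence this degenerate case cannot occur at an interior point. When $p_\varepsilon\neq 0$, the heart of the matter is a geometric continuity lemma for $g$, which I would establish separately: $g$ is jointly continuous on $\Omega\times(\R^n\setminus\{0\})$ and admits a modulus of continuity $\omega$ in $x$ that is uniform in the direction. Granting this, the envelopes collapse, $H_*=H^*=H$ at $(x_\varepsilon,p_\varepsilon)$ and $(y_\varepsilon,p_\varepsilon)$, and subtracting the two inequalities (the $|p_\varepsilon|$ terms cancel) gives $g(x_\varepsilon,p_\varepsilon)-g(y_\varepsilon,p_\varepsilon)\ge(1-\theta)\delta_0$. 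The left-hand side is at most $\omega(|x_\varepsilon-y_\varepsilon|)\to 0$, while the right-hand side is bounded below by $(1-\theta)\delta_0>0$ for the fixed $\theta$: the desired contradiction. Therefore $M\le 0$, i.e. $u\le v$ in $\Omega$.

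I expect the main obstacle to be precisely this continuity lemma for the slice functional $g$: controlling the oscillation of $\int_{(y-x)\cdot p=0}\rho\,d\mathcal{H}^{n-1}$ as the hyperplane both translates (the $x$-dependence, needed uniformly over directions) and rotates (the $p$-dependence for $p\neq 0$, needed to guarantee the envelopes $H_*,H^*$ coincide with $H$). For the translation part I would use that, with $\hat p:=p/|p|$ fixed, $g(x,p)$ depends on $x$ only through $x\cdot\hat p$, so shifting the hyperplane by $x_\varepsilon-y_\varepsilon$ changes the integrand by at most the modulus of $\rho$ over a slice whose $\mathcal{H}^{n-1}$-measure is bounded by the uniformly bounded cross-section of the support of $\rho$; the rotation part follows by the same dominated-convergence reasoning. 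This is exactly where the hypotheses that $\rho$ be uniformly continuous and compactly supported are essential, and it is the step most easily mishandled; by contrast, the degenerate slope $p=0$ is resolved cleanly and favorably by the positivity assumption \eqref{eq:sliceassumption}.
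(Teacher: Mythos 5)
Your proposal is correct and follows essentially the same route as the paper's own proof: the paper likewise builds a strict subsolution by scaling (there $u_k=(1-\varepsilon_k)u$, exploiting the $0$-homogeneity of the slice integral exactly as you do), runs the identical quadratic doubling-of-variables argument, rules out the degenerate slope $p=0$ at interior maxima via assumption \eqref{eq:sliceassumption}, and closes the contradiction with the same translation estimate $|H(y,p)-H(x,p)|\le C\,\mathrm{diam}(\Omega)^{n-1}\omega(|x-y|)$ obtained from the uniform continuity and compact support of $\rho$, before letting the scaling parameter tend to $1$. The only difference is presentational: the continuity of the slice integral in the direction $p\neq 0$, which you isolate as a separate lemma so that $H_*=H^*=H$ away from $p=0$, is used implicitly in the paper's comparison proof (and is verified in the course of proving Theorem \ref{thm:semi-diff-equalities}).
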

	
	To prove this fundamental comparison principle, we will construct an approximating sequence $u_k \to u$ so that $u_k$ are \emph{strict subsolutions}, meaning that the inequality \eqref{eqn:sub-sol-ineq} is a strict inequality. It turns out that the comparison principle is easier to prove for strict subsolutions. We begin with the following lemma:
		
	\begin{lemma}
		Let $u$ be a subsolution of \eqref{eqn:HequationTukey}.  Then the function $u_k = (1 - \varepsilon_k) u$, with $0<\varepsilon_k<1$, is a strict subsolution of \eqref{eqn:HequationTukey}, meaning that Equation \eqref{eqn:sub-sol-ineq} is a strict inequality for all $x \in \Omega$.
	\end{lemma}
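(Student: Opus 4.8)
The plan is to reduce the strict-subsolution property of $u_k$ to the ordinary subsolution property of $u$, exploiting two structural features of \eqref{eqn:HequationTukey}: the positive homogeneity of the superdifferential under scaling of the function, and the fact that the slice integral depends on the gradient only through its direction. First I would note that multiplication by the positive constant $1-\varepsilon_k$ preserves upper semicontinuity, so $u_k$ falls under Definition \ref{def:viscositySol}, and that directly from \eqref{eqn:semidiffs} one has the scaling identity $D^+ u_k(x) = (1-\varepsilon_k)\,D^+ u(x)$. Hence every $p \in D^+ u_k(x)$ has the form $p = (1-\varepsilon_k)\tilde p$ with $\tilde p \in D^+ u(x)$, and in particular $p \neq 0$ if and only if $\tilde p \neq 0$. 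It therefore suffices to prove the strict inequality $H_*(x,p) < 0$ for every such $p$ and every $x \in \Omega$.

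The crux is that the slice integral $\int_{q\cdot(\xi-x)=0}\rho(\xi)\,d\mathcal{H}^{n-1}$ is unchanged when $q$ is scaled by a positive factor, since the hyperplane of integration depends only on $q/|q|$. Consequently, for $\tilde p \neq 0$ the limsup defining $H_*$ (displayed after Definition \ref{def:viscositySolEquiv}) is taken along directions converging to the common direction $p/|p| = \tilde p/|\tilde p|$, so that
\begin{equation*}
\limsup_{q \to p}\int_{q\cdot(\xi-x)=0}\rho(\xi)\,d\mathcal{H}^{n-1} = \limsup_{q \to \tilde p}\int_{q\cdot(\xi-x)=0}\rho(\xi)\,d\mathcal{H}^{n-1} =: \Phi(x,\tilde p).
\end{equation*}
Writing $H_*$ out in both cases gives $H_*(x,p) = (1-\varepsilon_k)|\tilde p| - \Phi(x,\tilde p)$ and $H_*(x,\tilde p) = |\tilde p| - \Phi(x,\tilde p)$, with the \emph{same} quantity $\Phi(x,\tilde p)$ appearing in each. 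Since $u$ is a subsolution, $H_*(x,\tilde p) \le 0$, i.e. $\Phi(x,\tilde p) \ge |\tilde p|$, and substituting yields
\begin{equation*}
H_*(x,p) = (1-\varepsilon_k)|\tilde p| - \Phi(x,\tilde p) \le (1-\varepsilon_k)|\tilde p| - |\tilde p| = -\varepsilon_k|\tilde p| < 0,
\end{equation*}
which is the desired strict inequality whenever $\tilde p \neq 0$.

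It remains to treat the corner $p = 0$, corresponding to $\tilde p = 0$, which is precisely where the Hamiltonian is discontinuous and where the slice-positivity assumption \eqref{eq:sliceassumption} is essential. Here $|p| = 0$, and choosing $q = t\omega$ with $t \downarrow 0$ for any fixed direction $\omega$ shows that the slice integral along such a sequence equals $\int_{\omega\cdot(\xi-x)=0}\rho\,d\mathcal{H}^{n-1} \ge \delta(x)$, so that
\begin{equation*}
H_*(x,0) = -\limsup_{q \to 0}\int_{q\cdot(\xi-x)=0}\rho(\xi)\,d\mathcal{H}^{n-1} \le -\delta(x) < 0
\end{equation*}
for every $x \in \Omega$, where $\delta(x) > 0$ by \eqref{eq:sliceassumption}. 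Combining the two cases gives strictness for all $p \in D^+ u_k(x)$. I expect the main subtlety to lie not in any single estimate but in handling the lower semicontinuous envelope $H_*$ correctly near $p = 0$: one must check that the positive rescaling interacts properly with the limsup (so that $\Phi$ really is scale-invariant) and that it is the strict positivity $\delta(x) > 0$, rather than mere nonnegativity, that upgrades the corner inequality to a strict one. The factor $1-\varepsilon_k < 1$ supplies strictness away from the corner, while assumption \eqref{eq:sliceassumption} supplies it at the corner.
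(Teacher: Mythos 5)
Your proof is correct for the lemma as literally stated, and it rests on the same two ingredients as the paper's: the scaling identity $D^+u_k(x) = (1-\varepsilon_k)\,D^+u(x)$ combined with the invariance of the slice integral under positive rescaling of $q$ (so the limsup defining $H_*$ is the same at $p$ and at $\tilde p$), and the slice-positivity assumption \eqref{eq:sliceassumption} to handle the discontinuity of $H$ at the origin. The difference lies in the case split, and it has quantitative consequences. You split into $\tilde p \neq 0$, where the subsolution property of $u$ gives $H_*(x,p) \leq -\varepsilon_k|\tilde p|$, and $\tilde p = 0$, where \eqref{eq:sliceassumption} gives $H_*(x,0) \leq -\delta(x)$. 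This yields strictness at every point, but the gap $\varepsilon_k|\tilde p|$ degenerates as $|\tilde p| \to 0$, so you get no locally uniform lower bound on the amount of strictness. The paper instead splits on $|p| \leq \delta(x)/2$ versus $|p| > \delta(x)/2$, using \eqref{eq:sliceassumption} in the first case and the subsolution property of $u$ in the second, which produces the uniform estimate $H_*(x,p_k) \leq -\frac{\delta(x)}{2}\varepsilon_k$ valid for \emph{every} $p_k \in D^+u_k(x)$.

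This stronger form is not cosmetic: it is exactly what is invoked as \eqref{eqn:HLowerComparison} in the proof of Theorem \ref{th:ComparisonPrinciple}, where the relevant gradients $\alpha(x_\alpha - y_\alpha)$ vary with $\alpha$ and one needs the strictness gap to stay bounded away from zero along the sequence (it is then compared against $C\,\mbox{diam}(\Omega)^{n-1}\omega(|x_\alpha - y_\alpha|) \to 0$ to reach a contradiction). With your bound alone, $\varepsilon_k|\tilde p_\alpha|$ could a priori vanish in the limit and the contradiction would not follow; one could rescue the argument by also using the supersolution inequality for $v$ to bound $|\alpha(x_\alpha - y_\alpha)|$ below by $\delta(y_\alpha)$, but that amounts to modifying the comparison proof rather than the lemma. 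The fix within your own argument is one line: enlarge your corner case from $\tilde p = 0$ to $|p| \leq \delta(x)/2$ and apply \eqref{eq:sliceassumption} there, noting that $\limsup_{q \to p}\int_{q\cdot(\xi-x)=0}\rho\,d\mathcal{H}^{n-1} \geq \delta(x)$ for every $p$, which gives $H_*(x,p_k) \leq (1-\varepsilon_k)\frac{\delta(x)}{2} - \delta(x) \leq -\frac{\delta(x)}{2}\varepsilon_k$; together with your bound $-\varepsilon_k|\tilde p| < -\frac{\delta(x)}{2}\varepsilon_k$ in the complementary case $|p| > \delta(x)/2$, this recovers the uniform estimate the paper needs downstream.
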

	
	\begin{proof}
		
		According to Definition \ref{def:viscositySol}, in order to show that $u$ is a solution to \eqref{eqn:HequationTukey} we need to check that $H_*(x,p) \leq 0$ is satisfied for all $p \in \mathbb{R}^n$ in the superdifferential of $u$ at $x$.
		
		Let $p_k$ be in the superdifferential of $u_k$ at the point $x$. By the definition of $u_k$, we have that $p_k=(1-\varepsilon_k) p$ for some $p$ in the subdifferential of $u$. We need to consider two cases.
		
		If $|p| \leq \displaystyle \frac{\delta(x)}{2}$, then
		\begin{equation}
		\begin{array}{ccl}
		H_*(x,p_k) & = & |p_k|- \limsup_{q \rightarrow p_k} \displaystyle \int_{q \cdot (\xi - x) =0} \rho(\xi) \,d\mathcal{H}^{n-1} \\ & = & |p| - \varepsilon_k |p|- \limsup_{q \rightarrow p} \displaystyle \int_{q \cdot (\xi - x) =0} \rho(\xi) \,d\mathcal{H}^{n-1} \\
		& \leq & \displaystyle \frac{\delta(x)}{2} (1 - \varepsilon_k) - \delta(x) = -\displaystyle \frac{\delta(x)}{2} (1+ \varepsilon_k) \mbox{.}
		\end{array}
		\end{equation}
		
		If $|p| > \displaystyle \frac{\delta(x)}{2}$, then
		\begin{equation}
		\begin{array}{ccl}
		H_*(x,p_k) & = & |p| - \varepsilon_k |p| - \limsup_{q \rightarrow p_k} \displaystyle \int_{q \cdot (\xi - x)=0} \rho(\xi) \,d\mathcal{H}^{n-1} \\
		& = & |p| - \varepsilon_k |p| - \limsup_{q \rightarrow p} \displaystyle \int_{q \cdot (\xi - x) =0} \rho(\xi) \,d\mathcal{H}^{n-1} \\
		& \leq & - \varepsilon_k |p| < - \frac{\delta(x)}{2} \varepsilon_k
		\end{array}
		\end{equation}
		Thus we have that $H_*(x,p_k) \leq -\frac{\delta(x)}{2} \varepsilon_k$ and therefore $u_k$ is a strict subsolution of \eqref{eqn:TukeyEqn}.
	\end{proof}
	
	We now prove our comparison principle. The proof follows the classical ``doubling variables argument'', which doubles the number of variables and adds a quadratic regularization in order to construct appropriate functions that touch from above and below. We extend the proof in order to address the lack of continuity of $H$ and the possibility that densities go to zero near the boundary of their support. We remark that the proof likely can be adapted to handle other classes of distributions (e.g. densities with unbounded support and appropriate decay).
	
	\begin{proof}[Proof of Theorem \ref{th:ComparisonPrinciple}]
		
		Consider the function $u_k=(1- \varepsilon_k) u$ with $u_k \leq v$ on $\partial \Omega$, with $0<\varepsilon_k<1$. We want to show that $u_k \leq v$ on $\Omega$. The proof is carried out by contradiction. Assume that $\sup_{\overline{\Omega}} (u_k - v) >0$. Define the following function on $\overline{\Omega} \times \overline{\Omega}$:
		\begin{equation}
		\Phi(x,y) = u_k(x) - v (y) - \frac{\alpha}{2} |x-y|^2 
		\end{equation}
		The function is upper semicontinuous and, because $\Omega$ is bounded, it attains its maximum at some point $(x_{\alpha},y_{\alpha}) \in \overline{\Omega} \times \overline{\Omega}$.
		
		As $u,-v$ are bounded above on $\overline{\Omega}$ then $|x_{\alpha} - y_{\alpha}|^2 \leq \frac{C}{\alpha}$. We can assume that, as $\alpha \rightarrow \infty$, there is a subsequence $\{(x_{\alpha}, y_{\alpha})\} \rightarrow (x_0,x_0)$ for some $x_0 \in \overline{\Omega}$. 

		By the assumptions on the boundary values we must have that $x_0 \in \Omega$. Therefore $(x_{\alpha},y_{\alpha}) \in \Omega \times \Omega$ for large $\alpha$.
		
		Consider the function $\varphi(x) = \frac{\alpha}{2} |x - y_{\alpha}|^2$. By the definition of $\Phi$, $u - \varphi$ has a local maximum at $x_{\alpha} \in \Omega$. Then
		\begin{equation}
		H_* (x_{\alpha}, \alpha (x_{\alpha} - y_{\alpha})) \leq - \frac{\delta(x_{\alpha})}{2} \varepsilon_k
		\label{eqn:HLowerComparison}
		\end{equation}
		
		In the same manner, if we define the function $\psi (y) = - \frac{\alpha}{2} |x_{\alpha}- y |^2$ then $v - \psi$ has a local minimum at $y_{\alpha}$ and therefore
		\begin{equation}
		H^*(y_{\alpha}, \alpha (x_{\alpha}-y_{\alpha})) \geq 0
		\label{eqn:HUpperComparison}
		\end{equation}
		Note that if $x_{\alpha}= y _{\alpha}$, then this remark implies that $0$ is in the subdifferential of $v$ at the point $y_{\alpha}$. This contradicts the assumption that $v$ is a supersolution, namely that
		\begin{equation}
		\limsup_{q \rightarrow 0} \left[ |q| - \int_{q \cdot (\xi-x) = 0 } \rho (\xi) \,d\mathcal{H}^{n-1} \right] = 0 - \liminf_{q \rightarrow 0} \int_{q \cdot (\xi -x ) =0} \rho(\xi) \,d\mathcal{H}^{n-1} \geq 0
		\end{equation} 
		in view of assumption (\ref{eq:sliceassumption}). We can therefore be certain that $x_{\alpha} \neq y_{\alpha}$.
		
		Combining inequalities \eqref{eqn:HLowerComparison} and \eqref{eqn:HUpperComparison} yields
		\begin{equation}
		\label{eq:upperlowerdiff}
		0 < \delta \varepsilon_k \leq \frac{\delta(x_{\alpha})}{2} \varepsilon_k  \leq H^*(y_{\alpha}, \alpha (x_{\alpha}-y_{\alpha})) - H_* (x_{\alpha}, \alpha (x_{\alpha} - y_{\alpha})) , 
		\end{equation}
		where we know that $\frac{\delta (x_{\alpha})}{2} > \delta$ for some $\delta > 0$ because $x_{\alpha}$ is well-separated from the boundary for large enough $\alpha$.
		
		Note that for $p \neq 0$ we have that
		\begin{equation}
		\begin{array}{ccl}
		|H^*(y,p) - H_* (x,p)| & = & |H(y,p)- H(x,p)| \\
		& = & \left| \displaystyle \int_{p \cdot (\xi - y) = 0} \rho (\xi) \,d\mathcal{H}^{n-1}(\xi) - \int_{p \cdot (\xi - x) = 0} \rho (\xi) \,d\mathcal{H}^{n-1} (\xi) \right| \\
		& = & \left| \displaystyle \int_{p	 \cdot (\zeta - x) = 0} \rho (\zeta - x + y) \,d\mathcal{H}^{n-1} (\zeta) - \int_{p \cdot (\xi - x) = 0} \rho (\xi) \,d\mathcal{H}^{n-1}(\xi) \right| \\
		& = & \left|\displaystyle \int_{p \cdot (\xi - x) = 0} \left[ \rho(\xi - x + y) - \rho (\xi) \right] \,d\mathcal{H}^{n-1} (\xi) \right| \\
		& \leq & \displaystyle \int_{p \cdot (\xi - x) =0} \mathbf{1}_{\Omega} \,\,\omega (|y -x|) \,d\mathcal{H}^{n-1}(\xi)  \\ &= & \omega (|y - x|) \displaystyle \int_{p \cdot (\xi - x) = 0} \mathbf{1}_{\Omega} \,\, \,d\mathcal{H}^{n-1} (\xi) \leq C \mbox{diam} (\Omega) ^{n - 1} \omega (|y - x|)
		\end{array}
		\end{equation}
		where we have used the change of variables $\zeta = \xi + x - y$ and $\omega$ stands for the modulus of continuity of the density $\rho$. 
		
		Thus the inequality (\ref{eq:upperlowerdiff}) implies that
		\begin{equation}
		\begin{array}{{ccl}}
		0 < \delta \varepsilon_k & \leq &  H^*(y_{\alpha}, \alpha (x_{\alpha}-y_{\alpha})) - H_* (x_{\alpha}, \alpha (x_{\alpha} - y_{\alpha})) \\
		& \leq & C \mbox{diam} (\Omega)^{n-1} \omega(|y_{\alpha} - x_{\alpha}|)
		\end{array} \mbox{,}
		\end{equation}
		which yields a contradiction when $\alpha \rightarrow \infty$ as both $\delta$ and $\varepsilon_k$ are fixed and strictly greater than zero. Thus we have that $u_k \leq v$ in $\Omega$. Finally, we only need to take a sequence $\{\varepsilon_k\}$ such that $\varepsilon_k \rightarrow 0$ when $k \rightarrow \infty$ to conclude that $u \leq v$.
	\end{proof}
	
	The comparison principle immediately implies the following uniqueness property of viscosity solutions.
	
	\begin{corollary}\label{cor:uniqueness}
		Let $u_1$ and $u_2$ be viscosity solutions of \eqref{eqn:TukeyEqn}, and suppose that $u_1 = u_2$ on $\partial \Omega$. Then $u_1 = u_2$.
	\end{corollary}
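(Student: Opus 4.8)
The plan is to derive uniqueness directly from the comparison principle of Theorem \ref{th:ComparisonPrinciple}, applying it twice with the roles of the two solutions interchanged. The key observation is that, by Definition \ref{def:viscositySol}, a viscosity solution is by definition simultaneously a subsolution and a supersolution of \eqref{eqn:HequationTukey}. This dual status is exactly what allows a single comparison principle to yield a two-sided inequality.

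First I would note that $u_1$ is in particular a subsolution and $u_2$ is in particular a supersolution. Since we are given $u_1 = u_2$ on $\partial \Omega$, in particular $u_1 \leq u_2$ on $\partial \Omega$, so Theorem \ref{th:ComparisonPrinciple} gives $u_1 \leq u_2$ throughout $\Omega$. Next I would exchange the roles: now regard $u_2$ as the subsolution and $u_1$ as the supersolution. The same boundary hypothesis gives $u_2 \leq u_1$ on $\partial \Omega$, so a second application of Theorem \ref{th:ComparisonPrinciple} yields $u_2 \leq u_1$ in $\Omega$. Combining the two inequalities forces $u_1 = u_2$ on all of $\Omega$, and equality on $\partial \Omega$ holds by hypothesis, so $u_1 = u_2$ on $\overline{\Omega}$.

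There is essentially no serious obstacle here, since all the analytic difficulty was already absorbed into the doubling-of-variables argument establishing Theorem \ref{th:ComparisonPrinciple}; this corollary is a purely formal consequence. The only point that merits a moment of care is the legitimacy of swapping the two functions between the subsolution and supersolution slots. This is justified precisely because the notion of viscosity solution in Definition \ref{def:viscositySol} is symmetric in the sense that a single continuous function satisfies both the subsolution inequality \eqref{eqn:sub-sol-ineq} (through $H_*$ and $D^+u$) and the supersolution inequality (through $H^*$ and $D^-u$). The comparison principle never requires the two functions being compared to be distinct objects or to play fixed roles, so either assignment is admissible.

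\begin{proof}[Proof of Corollary \ref{cor:uniqueness}]
Since $u_1$ and $u_2$ are viscosity solutions of \eqref{eqn:TukeyEqn}, each is simultaneously a subsolution and a supersolution. Viewing $u_1$ as a subsolution and $u_2$ as a supersolution, the hypothesis $u_1 = u_2$ on $\partial \Omega$ gives $u_1 \leq u_2$ on $\partial \Omega$, so Theorem \ref{th:ComparisonPrinciple} implies $u_1 \leq u_2$ in $\Omega$. Interchanging the roles, we view $u_2$ as a subsolution and $u_1$ as a supersolution; again $u_2 \leq u_1$ on $\partial \Omega$, and Theorem \ref{th:ComparisonPrinciple} yields $u_2 \leq u_1$ in $\Omega$. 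Combining these two inequalities we conclude $u_1 = u_2$ in $\Omega$, and since $u_1 = u_2$ on $\partial \Omega$ by assumption, we obtain $u_1 = u_2$ on $\overline{\Omega}$.
\end{proof}
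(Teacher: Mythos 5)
Your proof is correct and is exactly the argument the paper intends: the paper states Corollary \ref{cor:uniqueness} as an immediate consequence of Theorem \ref{th:ComparisonPrinciple}, and your two applications of the comparison principle with the subsolution/supersolution roles interchanged is precisely that standard deduction.
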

	
	Having established the comparison principle and uniqueness, we now turn to proving the desired existence of solutions. In order to prove Theorem \ref{th:exuniq}, we want to use the Perron method for viscosity solutions of equations of the type 
	\begin{equation}
		\label{eqn:HxDu}
		H(x,Du)=0 .
	\end{equation}
	Given a supersolution $w$ of equation \eqref{eqn:HxDu}
	let us define
	\begin{equation}
	\mathcal{F}:= \left\{ u \Big| u \mbox{ is a viscosity subsolution of \eqref{eqn:HxDu} and } u \leq w \mbox{ in } \overline{\Omega}   \right\} .
	\end{equation}
Define now the function
	\begin{equation}\label{eqn:sup-U-def}
	U(x) := \sup \left\{ u(x) | u \in \mathcal{F} \right\}.
	\end{equation}
	As long as the set $\mathcal{F}$ is non-empty, the function $U$ is finite and well-defined, and provides a candidate viscosity solution. Informally, the method of proving that $U$ is a viscosity solution, also called the \emph{Perron method} is to demonstrate that $U^*$ and $U_*$, namely the upper-semicontinuous and lower semicontinuous envelopes of $U$, are, respectively, subsolutions and supersolutions to \eqref{eqn:TukeyEqn}. The maximum principle, namely Theorem \ref{th:ComparisonPrinciple}, then implies that $U^* \leq U_*$, which implies that $U_*=U=U^*$ and that $U$ is a viscosity solution.
	
	In order to use the Perron method, we need two lemmas.
	\begin{lemma}
		\label{lemma:sub}
		If $\mathcal{F}$ is nonempty then $U^*$, with $U$ defined by \eqref{eqn:sup-U-def}, is a viscosity subsolution of \eqref{eqn:TukeyEqn}.
	\end{lemma}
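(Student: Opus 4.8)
The plan is to work with the test-function formulation of subsolution from Definition \ref{def:viscositySolEquiv}: I must show that whenever $\varphi \in \mathcal{C}^1(\Omega)$ and $U^* - \varphi$ attains a local maximum at a point $x_0 \in \Omega$, one has $H_*(x_0, D\varphi(x_0)) \leq 0$. A first reduction is to assume that this maximum is \emph{strict}. Replacing $\varphi(x)$ by $\varphi(x) + |x - x_0|^2$ turns any local maximum into a strict one, and since the gradient of $|x-x_0|^2$ vanishes at $x_0$, this substitution leaves $D\varphi(x_0)$ — hence the quantity $H_*(x_0, D\varphi(x_0))$ we must bound — unchanged. Adding a constant to $\varphi$ as well, I normalize $U^*(x_0) = \varphi(x_0)$ and fix a closed ball $\overline{B}(x_0, r) \subset \Omega$ on whose boundary $U^* - \varphi \leq -2\eta < 0$.

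The heart of the argument is to produce, from the family of subsolutions defining $U$, interior test points that migrate to $x_0$. By definition of the upper envelope there is a sequence $x_m \to x_0$ with $U(x_m) \to U^*(x_0)$, and by definition of $U$ as a supremum I can choose $u_m \in \mathcal{F}$ with $u_m(x_m) \geq U(x_m) - 1/m$, so that $u_m(x_m) \to \varphi(x_0)$. Each $u_m - \varphi$ is upper semicontinuous and hence attains its maximum over $\overline{B}(x_0,r)$ at some $y_m$. Comparing the value at $y_m$ — which is at least $(u_m - \varphi)(x_m) \to 0$ — with the boundary bound $u_m - \varphi \leq U^* - \varphi \leq -2\eta$ on $\partial B(x_0,r)$ forces $y_m$ into the open ball for large $m$; and using $u_m \leq U^*$ together with the strictness of the maximum of $U^* - \varphi$ at $x_0$ and the upper semicontinuity of $U^* - \varphi$ pins any subsequential limit of $y_m$ down to $x_0$ itself. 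Thus $y_m \to x_0$, and each $y_m$ is an interior local maximum of $u_m - \varphi$.

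Since $u_m$ is a subsolution, Definition \ref{def:viscositySolEquiv} applied at the interior maximum $y_m$ gives $H_*(y_m, D\varphi(y_m)) \leq 0$ for every $m$. It remains to pass to the limit, and this is exactly where the non-classical, gradient-discontinuous Hamiltonian enters. Because $\varphi \in \mathcal{C}^1$, we have $D\varphi(y_m) \to D\varphi(x_0)$, and — crucially — the subsolution inequality is phrased through the lower semicontinuous envelope $H_*$. Lower semicontinuity of $H_*$ yields precisely
\[
H_*(x_0, D\varphi(x_0)) \leq \liminf_{m \to \infty} H_*(y_m, D\varphi(y_m)) \leq 0,
\]
which is the desired conclusion. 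I expect the main obstacle to be the localization of the second paragraph rather than the final limit passage: one must carefully combine the supremum structure of $U$, the envelope $U^*$, and the strictness of the maximum to guarantee $y_m \to x_0$, since $U$ itself need not be continuous. By contrast, the discontinuity of $H$ that defeats the classical Perron lemma is absorbed painlessly here, because the subsolution definition via $H_*$ was arranged so that lower semicontinuity makes the inequality stable under the limits $y_m \to x_0$ and $D\varphi(y_m) \to D\varphi(x_0)$.
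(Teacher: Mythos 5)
Your proof is correct and follows essentially the same route as the paper's: the strict-maximum modification $\varphi + |x-x_0|^2$, the choice of $u_k \in \mathcal{F}$ with $u_k(x_k) \geq U(x_k) - \frac{1}{k}$ along a sequence realizing $U^*(x_0)$, localization of the maximum points $y_k$ of $u_k - \varphi$ so that $y_k \to x_0$, and the final limit passage using lower semicontinuity of $H_*$ together with $D\varphi(y_k) \to D\varphi(x_0)$. The only cosmetic difference is that you prove $y_k \to x_0$ by a soft compactness and subsequence argument with the boundary bound $-2\eta$, whereas the paper uses the quantitative chain $|y_k - x_0|^2 \leq \widetilde{\varphi}(x_k) - U(x_k) + \frac{1}{k}$; both are valid.
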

	\begin{proof}
		Let $\varphi \in \mathcal{C}^1$ such that $U^*- \varphi$ has a local maximum at $x_0 \in \Omega$. We can assume without loss of generality that $\varphi(x_0)=U^*(x_0)$, by adding a constant to $\varphi$ if necessary. 	
		Thus we have that
		\begin{equation}
		U^*(x)-\varphi(x) \leq 0 \mbox{ on } B(x_0,r) ,
		\end{equation}
		for some $r>0$.
		Let us define the function
		\begin{equation}
		\widetilde{\varphi} (x) = \varphi(x)+|x-x_0|^2 
		\end{equation}
		We note that  $U^*-\widetilde{\varphi}$ has a strict local maximum at $x_0$, and in particular
		\begin{equation}
		\label{eqn:sublemmaineq}
		U^*(x)- \widetilde{\varphi}(x) \leq -|x-x_0|^2 \mbox{ on } B(x_0,r).
		\end{equation}
		
		By the definition of $U^*$ as the upper semicontinuous envelope of $U$, there exists a sequence $x_k \rightarrow x_0$ such that $U(x_k) \rightarrow U^*(x_0)$. As $U$ is pointwise defined as  the supremum of the elements in $\mathcal{F}$, for each positive integer $k$ we can find a $u_k \in \mathcal{F}$ such that 
		\begin{equation}
		u_k(x_k) \geq U(x_k) - \frac{1}{k}.
		\end{equation}
		
		Because each $u_k$ is a subsolution of \eqref{eqn:HxDu} and is therefore upper semicontinuous, the function $u_k-\varphi$ attains a maximum at some point $y_k \in B(x_0,r)$. In view of inequality \eqref{eqn:sublemmaineq}, we have that
		
		\begin{equation}
		\begin{array}{ccccl}
		|y_k-x_0|^2 & \leq & \widetilde{\varphi}(y_k) - U^*(y_k)& \leq & \widetilde{\varphi}(y_k) - u_k(y_k) \\
		& \leq & \widetilde{\varphi}(x_k) - u_k(x_k ) & \leq & \displaystyle \widetilde{\varphi}(x_k)-U(x_k) + \frac{1}{k},
		\end{array}
		\end{equation}
		where we have used that $U^* \geq U \geq u_k$ and the definitions of $x_k$ and $y_k$. Thus we have that
		\begin{equation}
		\lim_{k \rightarrow \infty} |y_k-x_0|^2 \leq \lim_{k \rightarrow \infty} \left(\widetilde{\varphi}(x_k) - U(x_k)+\frac{1}{k}\right)= \widetilde{\varphi}(x_0)-U^*(x_0)=0
		\end{equation}
		and therefore $\{y_k\}$ converges to $x_0$. This implies that for $k$ large enough $u_k - \widetilde{\varphi}$ attains a strict local maximum at $y_k$ in the interior of $B(x_0,r)$
		As $u_k$ is a subsolution of \eqref{eqn:HxDu} we have
			\begin{equation}
			H_*(y_k,D \widetilde{\varphi}(y_k)) \leq 0
			\end{equation}
		Finally, because $H_*$ is lower semicontinuous and the definition of $\widetilde{\varphi}$ implies that $$\lim_{y_k \rightarrow x_0} (y_k,D \widetilde{\varphi}(y_k))= (x_0, D\varphi(x_0)),$$ we have
			\begin{equation}
			H_*(x_0,D \varphi (x_0)) \leq \liminf_{k \rightarrow \infty} H_* (y_k, D \widetilde \varphi (y_k)) \leq 0,
			\end{equation}
		which shows that $U^*$ is a subsolution.
	\end{proof}
	
	The following result and the \textit{bump construction} \cite{crandall1992user} used in its proof are classical in the theory of viscosity solutions \cite{crandall1992user,crandall1983viscosity,calder2018lecture}. The proof we give mirrors the modern and clear presentation in \cite{calder2018lecture} and \cite{bardi2008optimal} with some modifications to take into account the more general definition of viscosity solution given in \eqref{def:viscositySol}.
	
	\begin{lemma}
		\label{lemma:super}
		Let $u \in \mathcal{F}$. If $u_*$ is not a viscosity supersolution of \eqref{eqn:TukeyEqn} then $v(x) > u(x)$ for some $v \in \mathcal{F}$ and some $x \in \Omega$. 
	\end{lemma}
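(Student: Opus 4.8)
The plan is to carry out the classical \emph{bump construction} from Perron's method, adapting it to the discontinuous Hamiltonian $H$ of \eqref{eqn:HequationTukey}. Since $u_*$ fails to be a supersolution of \eqref{eqn:TukeyEqn}, Definition \ref{def:viscositySolEquiv} furnishes a point $x_0 \in \Omega$ and a test function $\varphi \in \mathcal{C}^1(\Omega)$ such that $u_* - \varphi$ has a local minimum at $x_0$ and $H^*(x_0, D\varphi(x_0)) < 0$. I would first normalize so that $\varphi(x_0) = u_*(x_0)$, and then replace $\varphi$ by $\varphi - |x - x_0|^2$, which makes the minimum \emph{strict} while leaving $D\varphi(x_0)$, and hence the value $H^*(x_0,D\varphi(x_0))$, unchanged; thus $u_*(x) > \varphi(x)$ for $0 < |x-x_0| < r$. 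Because $H^*$ is upper semicontinuous and $x \mapsto D\varphi(x)$ is continuous, the strict inequality $H^*(x_0, D\varphi(x_0)) < 0$ persists on a neighborhood: after shrinking $r$, $H^*(x, D\varphi(x)) < 0$, and a fortiori $H_*(x, D\varphi(x)) < 0$, for all $x \in B(x_0,r)$. Consequently $\varphi_\delta := \varphi + \delta$, which has the same gradient as $\varphi$, is a strict classical subsolution of \eqref{eqn:TukeyEqn} on $B(x_0,r)$ for every $\delta$.

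The key step, and the one I expect to be the main obstacle, is guaranteeing that the bumped function remains below the barrier $w$ defining $\mathcal{F}$. Here I would prove the claim $u_*(x_0) < w(x_0)$. Indeed, were $u_*(x_0) = w(x_0)$, then since $u \le w$ yields $u_* \le w$, the chain $w(x) \ge u_*(x) \ge \varphi(x)$ near $x_0$ together with $w(x_0) = u_*(x_0) = \varphi(x_0)$ would exhibit $w - \varphi$ as also attaining a local minimum at $x_0$; as $w$ is a supersolution this would force $H^*(x_0, D\varphi(x_0)) \ge 0$, contradicting the failure inequality. With $\eta := w(x_0) - u_*(x_0) > 0$ secured, lower semicontinuity of $w$ lets me take $\delta < \eta/4$ and shrink $r$ so that $\varphi_\delta(x) < w(x)$ throughout $B(x_0,r)$.

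Next I would glue. Let $2\delta_0 > 0$ be the minimum of the lower semicontinuous function $u_* - \varphi$ over the compact annulus $\{ r/2 \le |x - x_0| \le r\}$, which is positive by the strict minimum; choosing $\delta \le \delta_0$ guarantees $\varphi_\delta < u$ on that annulus. I then set
\begin{equation}
v(x) = \begin{cases} \max\{ u(x), \varphi_\delta(x)\} & x \in B(x_0,r), \\ u(x) & x \in \Omega \setminus B(x_0,r). \end{cases}
\end{equation}
Because $\varphi_\delta < u$ near $\partial B(x_0,r)$, the two branches agree on an annulus, so $v$ is well defined and upper semicontinuous. On $B(x_0,r)$ both $u$ and $\varphi_\delta$ are subsolutions, and the standard fact that a maximum of two subsolutions is again a subsolution holds here verbatim, since its proof only inspects $H_*$ at the gradient of the touching test function; elsewhere $v = u$. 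As the subsolution property is local, $v$ is a subsolution on all of $\Omega$, and combined with $v \le w$ from the previous paragraph we conclude $v \in \mathcal{F}$.

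Finally, to produce a point where $v > u$ I would use a sequence $x_n \to x_0$ realizing the lower envelope, so that $u(x_n) \to u_*(x_0)$. Along it $\varphi_\delta(x_n) \to \varphi(x_0) + \delta = u_*(x_0) + \delta$, whence for $n$ large $\varphi_\delta(x_n) > u(x_n)$ and therefore $v(x_n) = \varphi_\delta(x_n) > u(x_n)$ at $x_n \in \Omega$, which is the desired strict inequality. I emphasize that the only place where the nonstandard, gradient-discontinuous form of $H$ could interfere is the passage from $H^*(x_0, D\varphi(x_0)) < 0$ to its negativity on a full neighborhood; this is dispatched cleanly by the upper semicontinuity of the envelope $H^*$, so no structural hypotheses on $\rho$ beyond those already in force are required.
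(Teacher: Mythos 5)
Your proposal is correct and follows essentially the same route as the paper: the classical Perron bump construction, including the same contradiction argument showing $w(x_0) > u_*(x_0)$, the use of upper semicontinuity of $H^*$ to propagate the strict inequality to a ball, the gluing of $u$ with the bumped test function via a maximum, and the sequence $x_n \to x_0$ realizing $u_*(x_0)$ to exhibit the strict increase. The only cosmetic difference is that you strictify the minimum by subtracting $|x-x_0|^2$ and then add a constant bump, whereas the paper adds the quadratic bump $\delta\left(\left(\tfrac{r}{2}\right)^2 - |x-x_0|^2\right)$ directly; the two devices are equivalent.
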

	\begin{proof}
		The idea of the proof is to directly construct a subsolution $v$ in $\mathcal{F}$ with the required properties using $u$.
		Let $u \in \mathcal{F}$ and let us assume that $u_*$ is not a supersolution of \eqref{eqn:TukeyEqn}. Then there exists a function $\varphi \in \mathcal{C}^{\infty} (\Omega)$ such that $u_*- \varphi$ attains a local minimum at $x_0$ and
		\begin{equation}
		H^*(x_0,D \varphi(x_0))  <0 ,
		\label{eqn:maximcontr}
		\end{equation}
		We can assume that the local minimum at $x_0$ is equal to zero, i.e. $\varphi(x_0)=u_*(x_0)$, by adding a constant to $\varphi$ if necessary.
		
		By the definition of $\mathcal{F}$, we have that $u_* \leq w$ . This implies that $ 0 \leq u_* -\varphi \leq w-\varphi$ for $x \in B(x_0,r)$ for some $r>0$, because zero is the local minimum of $u_*-\varphi$ at $x_0$. We cannot have that $\varphi(x_0)=w(x_0)$, because this would imply that $w-\varphi$ has a local minimum at $x_0$ and contradict that $w$ is a supersolution in view of \eqref{eqn:maximcontr}. Then we have that $w(x_0)-\varphi(x_0)>0$. 
		
		Because $H^*$ is upper semicontinuous, there is $\varepsilon > 0 $ and a ball $B(x_0,r) \subset \Omega$ such that $\varphi \leq u_*$ and $\varphi + \varepsilon \leq w$ and
		\begin{equation}
		H^*((x,D \varphi(x)) +\varepsilon \leq 0
		\end{equation}
		for $x \in B(x_0,r)$. Let us define
		\begin{equation}
		\psi(x)=\varphi(x) + \delta \left(\left(\frac{r}{2} \right)^2 -|x-x_0|^2 \right).
		\end{equation}
		Again, by the upper semicontinuity of $H^*$, it is possible to choose a small enough $\delta >0$ so that $\psi \leq w$ on $B(x_0,r)$ and
		\begin{equation}
		H^*(x, D \psi(x)) \leq 0
		\label{eqn:psiissubsolution}
		\end{equation}
		on $B(x_0,r)$. Let us define the function $v$ as
		\begin{equation}
		v(x) = \left\{
		\begin{array}{ll}
		\max \{ u(x), \psi(x) \} & \mbox{ for } x \in B(x_0,r) \\
		u(x) & \mbox{anywhere else}
		\end{array}
		\right.
		\end{equation}
		
		Note that $\psi$ is a viscosity subsolution in view of equation \eqref{eqn:psiissubsolution} and therefore $v$ is a subsolution on $B(x_0,r)$. By the definition of $\psi$, we have $\psi(x) \leq \varphi(x)$ in $B(x_0,r) \backslash B(x_0,\frac{r}{2})$. On the other hand, we chose $\varepsilon$ so that $\varphi \leq u_* $ ($\leq u$) on $B(x_0,r)$, so we have that $v=u$ on $B(x_0,r) \backslash B(x_0,\frac{r}{2})$. Hence $v$ is a subsolution with $v \leq w$ and then $v \in \mathcal{F}$.
		
		By the definition of $u_*$, there is a sequence $x_k \rightarrow x_0$ such that $u(x_k) \rightarrow u_*(x_0)$. By the definition of $v$, we have $v(x) \geq \psi(x)$ on $B(x_0,r)$ and then:
		\begin{equation}
		\liminf_{k \rightarrow \infty} v(x_k) \geq \lim_{k \rightarrow \infty} \psi(x_k) = \lim_{k \rightarrow \infty} \varphi(x_k) + \delta \left(\frac{r}{2}\right)^2 = u_*(x_0)  + \delta \left(\frac{r}{2}\right)^2 .
		\end{equation}
		
		Thus, for $k$ large enough,
		we have that
		\begin{equation}
		v(x_k) \geq u(x_k) + \delta \left(\frac{r}{2}\right)^2
		\end{equation} 
		so that $v(x_k) > u(x_k)$, which concludes the proof.

	\end{proof}
	
	With these tools in hand, we can now prove the existence and uniqueness of viscosity solutions of equation \eqref{eqn:HequationTukey}

	\begin{proof}[Proof of Theorem \ref{th:exuniq}]
		We will prove in Section \ref{sec:Tukeyaresuper} that $T(x)$ is a viscosity supersolution of \eqref{eqn:HequationTukey} in $\Omega$. Let us take $w=T$ in the definition of $\mathcal{F}$, and $U$ as given in \eqref{eqn:sup-U-def}. We can be sure that $\mathcal{F}$ is not empty because $0$ is a subsolution of \eqref{eqn:HequationTukey}. As we have that $U \leq T$ in $\Omega$ and $T$ is continuous then we also have that $U^* \leq T$ in $\Omega$, by the characterization of the upper semicontinuous envelope of $U$ as the smallest upper semicontinuous function equal to or greater than $U$. In particular, $U^*(x) \leq T(x)=0$ for $x \in \partial \Omega$.
		
		Now, because $U^*$ is a subsolution of \eqref{eqn:HequationTukey}, according to Lemma \ref{lemma:sub}, and $U^* \leq T$, then $U^* \in \mathcal{F}$. By the definition of $U$, we then have $U \geq U^*$, and hence $U=U^*$. Additionally we have that $0 \leq U$ in $\overline{\Omega}$ and that, by the characterization of $U_*$ as the largest lower semicontinuous function greater than or equal to $U$, $0 \leq U_*$ in $\Omega$. In particular $0 \leq  U_*(x)$,  for $x \in \partial \Omega$.
		
		By applying the contrapositive of Lemma \ref{lemma:super} for $u = U$, using the definition for $U$ \eqref{eqn:sup-U-def} we have that $U_*$ is a viscosity supersolution of \eqref{eqn:TukeyEqn}. Because $U^* \leq 0 \leq U_*$ on $\partial \Omega$ we can use the comparison Theorem \ref{th:ComparisonPrinciple} to get that $U^* \leq U_*$ in $\Omega$.
		
		This then implies that $U=U^*=U_*$ in $\Omega$ and therefore $U$ is a viscosity solution to \eqref{eqn:TukeyEqn}. The uniqueness property in Corollary \ref{cor:uniqueness} then gives uniqueness and completes the proof.
	\end{proof}

	\section{Tukey depths as viscosity solutions}\label{sec:depths-as-visc}
	Motivated by the one-dimensional situation, we have developed a theory of viscosity solutions which applies to the first-order PDE that Tukey depths solve at points where they are smooth. However, we have not yet rigorously made any connection between the viscosity solutions of the PDE and the Tukey depths themselves. The following sections partially address this question. In particular, we demonstrate that Tukey depths are supersolutions and that, in certain cases with underlying convexity, Tukey depths are actually viscosity solutions. We also provide a specific example where the Tukey depth is not a viscosity solution. Fully resolving the question of when Tukey depths match the viscosity solution is a topic of current work. In any case, the viscosity solution provides a new avenue for computing lower bounds on Tukey depths in all situations, and for actually computing them in many situations.
	\subsection{Tukey depths are supersolutions}
	\label{sec:Tukeyaresuper}
	In view of Definition \ref{def:viscositySol}, we are interested in a precise characterization of the super- and subdifferentials, $D^+T(x)$ and $D^-T(x)$, of the Tukey depth. We shall see that the super- and subdifferentials of a function $u$ of the form $u(x) := \inf_{b \in B} g(x,b)$, in our case $g:S \times \mathbb{S}^{n-1} \rightarrow \mathbb{R}$ for some open $S \subset \mathbb{R}^n$, have certain special properties and characterization under differentiability assumptions on $g$. These functions are also called \emph{marginal} in the theory of optimal control.
	For the proofs of Propositions \ref{prop:supsub1} and \ref{prop:supsub2}, which are classical results in this area, we refer the reader to Chapter 2 of \cite{bardi2008optimal}.
	
Let us define the (possibly empty) set
\begin{equation}
M(x) := \{b \in B | u(x) = g(x,b)\},
\label{eqn:M-def}
\end{equation}
which contains the elements $b \in B$ that minimize $g$ for a given $x$. 

	\begin{proposition}
	  Assume that $g(x,b)$ is bounded for all $x \in S \subset \mathbb{R}^n$ and that $x \mapsto g(x,b)$ is continuous in $x$, uniformly with respect to $b \in B$, where $B \subset \mathbb{R}^n$ is a compact set, namely
		\begin{equation}
		\label{eq:gunifcont}
		|g(x,b)-g(y,b)| \leq \omega(|x-y|,R)
		\end{equation}
		for all $|x|,|y| \leq R, b \in B$ for some modulus of continuity $\omega$. Then for $u(x) := \inf_b g(x,b)$, we have that  $u \in C(\Omega)$ and
		\begin{equation}
		\begin{array}{ccc}
		D_x^+ g(x,b) & \subset & D^+ u (x) \\
		D^- u(x) & \subset & D_x^- g(x,b)
		\end{array}
		\end{equation}
		for all $b \in M(x)$, with $M$ given as in \eqref{eqn:M-def}. Here $D^\pm_x g(x,b)$ denote the super- and subdifferentials of $g$ in $x$. 
		\label{prop:supsub1}
	\end{proposition}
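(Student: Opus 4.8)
The plan is to reduce both inclusions to a single elementary comparison: for any $b \in M(x)$ we have $u(x) = g(x,b)$ by the definition of $M(x)$ in \eqref{eqn:M-def}, while $u(y) = \inf_{b'} g(y,b') \leq g(y,b)$ for every $y$ by the definition of the infimum. This one-sided inequality, combined with the $\limsup$/$\liminf$ structure of the semidifferentials in \eqref{eqn:semidiffs}, is exactly what is needed to pass information between $g(\cdot,b)$ and $u$ in each direction. As a preliminary step I would record the continuity of $u$: for $|x|,|y| \leq R$ the standard estimate $|\inf_b g(x,b) - \inf_b g(y,b)| \leq \sup_b |g(x,b)-g(y,b)| \leq \omega(|x-y|,R)$ shows that $u$ inherits the modulus of continuity from hypothesis \eqref{eq:gunifcont}, so $u \in C(\Omega)$.

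For the superdifferential inclusion, fix $b \in M(x)$ and take $p \in D_x^+ g(x,b)$. Subtracting $u(x) = g(x,b)$ and using $u(y) \leq g(y,b)$ yields
\[
u(y) - u(x) - p \cdot (y-x) \leq g(y,b) - g(x,b) - p \cdot (y-x)
\]
for all $y$ near $x$. Dividing by $|y-x|>0$ and taking $\limsup_{y \to x}$, the right-hand quotient is $\leq 0$ precisely because $p \in D_x^+ g(x,b)$, hence the left-hand $\limsup$ is $\leq 0$ as well and $p \in D^+ u(x)$.

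The subdifferential inclusion is the mirror image. Fix $b \in M(x)$ and take $p \in D^- u(x)$. Reading the same comparison as $g(y,b) - g(x,b) - p \cdot (y-x) \geq u(y) - u(x) - p \cdot (y-x)$ bounds the difference quotient for $g(\cdot,b)$ below by that for $u$; taking $\liminf_{y \to x}$ and using $p \in D^- u(x)$ forces the lower limit to be $\geq 0$, so $p \in D_x^- g(x,b)$. I do not anticipate a genuine obstacle once the comparison $u \leq g(\cdot,b)$ with equality at $x$ is identified; the only point needing care is that $M(x)$ may be empty, in which case both inclusions hold vacuously and no attainment or continuity-in-$b$ hypothesis is required.
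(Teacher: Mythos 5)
Your proposal is correct, and it is essentially the canonical argument: the paper itself does not prove Proposition \ref{prop:supsub1} but defers to Chapter 2 of \cite{bardi2008optimal}, where the proof is exactly this comparison — $u \leq g(\cdot,b)$ with equality at $x$ for $b \in M(x)$, giving the modulus of continuity for $u$ and both semidifferential inclusions by monotonicity of $\limsup$/$\liminf$ of the difference quotients. Your added remark that the inclusions hold vacuously when $M(x) = \emptyset$ is also correct and consistent with the statement.
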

	
	\begin{proposition}
		Assume that $g$ is as in Proposition \ref{prop:supsub1} and that $g(\cdot,b)$ is differentiable at $x$ uniformly in $b$, that is
		\begin{equation}
		\label{eq:gunifdiff}
		|g(x+h,b)-g(x,b)-D_x g(x,b) \cdot h| \leq |h| \omega_1(|h|)
		\end{equation}
		for $b \in B$, $h$ small and some modulus of continuity $\omega_1$.
		
		Additionally, suppose that
		\begin{equation}
		\label{gbcont}
		\begin{array}{ll}
		b \rightarrow D_x g(x,b) & \mbox{ is continuous } \\
		b \rightarrow g(x,b) & \mbox{ is lower semicontinuous.}
		\end{array}
		\end{equation}
		
		Let us define
		\begin{equation}
		\begin{array}{ccccc}
		Y(x) & = & \{D_x g(x,b) \, | \, b \in M(x) \} . & &
		\end{array}
		\end{equation} 
		
		Then we have that $Y(x) \neq  \emptyset$ and we can characterize the super- and subdifferentials of $u$ in the following way
		\begin{equation}
		\label{eq:semidiffcharac}
		\begin{array}{ccl}
		D^+u(x) & = &  \overline{\mbox{co}} \, Y(x) \\
		D^-u(x) & = & \left\{  \begin{array}{ccl}
		\{y\} & \mbox{if} & Y(x)=\{y\} \\
		\emptyset & \mbox{if} & Y(x) \mbox{ is not a singleton} .
		\end{array}\right.
		\end{array}
		\end{equation}
		\label{prop:supsub2}
	\end{proposition}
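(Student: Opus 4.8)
The plan is to reduce the entire statement to a single computation of the one-sided directional derivatives of $u$, and then read off both semidifferentials from the support-function characterization of closed convex sets. First I would record the easy structural facts. Since $B$ is compact and $b\mapsto g(x,b)$ is lower semicontinuous by \eqref{gbcont}, the infimum defining $u(x)$ is attained, so $M(x)\neq\emptyset$; as $M(x)$ is a closed subset of the compact set $B$ and $b\mapsto D_xg(x,b)$ is continuous, $Y(x)$ is a nonempty \emph{compact} set, so $\overline{\mathrm{co}}\,Y(x)=\mathrm{co}\,Y(x)$ is compact and convex. The inclusion $Y(x)\subset D^+u(x)$ is immediate from Proposition \ref{prop:supsub1} together with \eqref{eq:gunifdiff}: for $b\in M(x)$ differentiability gives $D^\pm_x g(x,b)=\{D_xg(x,b)\}$, so $D^+_xg(x,b)\subset D^+u(x)$ reads $D_xg(x,b)\in D^+u(x)$; since $D^+u(x)$ is closed and convex this already yields $\overline{\mathrm{co}}\,Y(x)\subset D^+u(x)$, and dually $D^-u(x)\subset\bigcap_{b\in M(x)}\{D_xg(x,b)\}$.

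The crux is the directional-derivative identity: for every unit vector $q$,
\[
\lim_{t\to0^+}\frac{u(x+tq)-u(x)}{t}=\min_{y\in Y(x)}y\cdot q .
\]
I would prove ``$\leq$'' by testing with a fixed minimizer $b^*\in M(x)$: from $u(x+tq)\leq g(x+tq,b^*)$, $u(x)=g(x,b^*)$ and \eqref{eq:gunifdiff} one gets $\limsup_{t\to0^+}\frac{u(x+tq)-u(x)}{t}\leq D_xg(x,b^*)\cdot q$, and then minimizing over $b^*\in M(x)$. The reverse inequality ``$\geq$'' is the main obstacle. Writing $u(x+tq)=g(x+tq,b_t)$ for a minimizer $b_t\in M(x+tq)$, I would use $g(x,b_t)\geq u(x)$ and \eqref{eq:gunifdiff} to obtain $u(x+tq)-u(x)\geq t\,D_xg(x,b_t)\cdot q+o(t)$ with the error uniform in $b_t$. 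To pass to the limit I need every cluster point of $b_t$ as $t\to0^+$ to lie in $M(x)$ (upper semicontinuity of $x\mapsto M(x)$): taking $b_{t_k}\to\bar b$ along a subsequence (compactness of $B$), the uniform continuity \eqref{eq:gunifcont} together with lower semicontinuity in \eqref{gbcont} give $g(x,\bar b)\leq\liminf_k g(x+t_kq,b_{t_k})=\lim_k u(x+t_kq)=u(x)$, hence $\bar b\in M(x)$; continuity of $b\mapsto D_xg(x,b)$ then yields the lower bound. This is exactly the step where all four hypotheses (compactness of $B$, uniform differentiability, and the two semicontinuity conditions) are used simultaneously, and controlling the error $o(t)$ \emph{uniformly} in $b$ is the delicate point.

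With the identity in hand, the characterization of $D^+u(x)$ is finished by proving the remaining inclusion $D^+u(x)\subset\overline{\mathrm{co}}\,Y(x)$. If $p\in D^+u(x)$ then $\limsup_{t\to0^+}\frac{u(x+tq)-u(x)}{t}\leq p\cdot q$ for every $q$; applying this in the direction $-q$ and invoking the identity gives $-\max_{y\in Y(x)}y\cdot q\leq -p\cdot q$, i.e. $p\cdot q\leq\max_{y\in Y(x)}y\cdot q=\sigma_{\overline{\mathrm{co}}\,Y(x)}(q)$ for all $q$. Since $\overline{\mathrm{co}}\,Y(x)$ is closed and convex, the support-function (bipolar) characterization forces $p\in\overline{\mathrm{co}}\,Y(x)$, establishing \eqref{eq:semidiffcharac} for $D^+$.

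Finally I would treat $D^-u(x)$ by the same identity. If $Y(x)=\{y\}$, the identity shows every directional derivative equals $y\cdot q$, which is linear in $q$, so $u$ is differentiable at $x$ with $\nabla u(x)=y$ and therefore $D^-u(x)=\{y\}$. If $Y(x)$ is not a singleton and $p\in D^-u(x)$, then $\liminf_{t\to0^+}\frac{u(x+tq)-u(x)}{t}\geq p\cdot q$ combined with the identity gives $\min_{y\in Y(x)}y\cdot q\geq p\cdot q$ for all $q$; applying this to $q$ and to $-q$ forces $\max_{y\in Y(x)}y\cdot q\leq p\cdot q\leq\min_{y\in Y(x)}y\cdot q$, hence $\max_{y\in Y(x)}y\cdot q=\min_{y\in Y(x)}y\cdot q$ for every $q$. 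This can hold only when $Y(x)$ is a single point, contradicting the hypothesis, so $D^-u(x)=\emptyset$. The nonemptiness $Y(x)\neq\emptyset$ was already noted in the first step, completing the proof.
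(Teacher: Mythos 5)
The paper offers no proof of this proposition at all: it defers to Chapter 2 of \cite{bardi2008optimal}, and your argument is essentially that classical proof --- establish the one-sided directional-derivative formula $\lim_{t\to0^+}t^{-1}(u(x+tq)-u(x))=\min_{y\in Y(x)}y\cdot q$ for the marginal function, then recover both semidifferentials from it. Your preliminary steps (attainment of the infimum via compactness and lower semicontinuity, compactness of $Y(x)$, the inclusion $\overline{\mathrm{co}}\,Y(x)\subset D^+u(x)$ from Proposition \ref{prop:supsub1} and closedness/convexity of $D^+u(x)$), your two-sided proof of the directional-derivative identity (including the cluster-point argument showing that limits of minimizers at $x+tq$ lie in $M(x)$, which is indeed where all four hypotheses enter), and the support-function step giving $D^+u(x)\subset\overline{\mathrm{co}}\,Y(x)$ are all correct. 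So is your treatment of the non-singleton case of $D^-$, which in fact already follows from your earlier observation $D^-u(x)\subset\bigcap_{b\in M(x)}\{D_xg(x,b)\}$.

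The one genuine gap sits in the singleton case of $D^-$: the inference ``every directional derivative equals $y\cdot q$, which is linear in $q$, so $u$ is differentiable at $x$'' is not valid. Existence of all one-sided directional derivatives, linear in the direction, is Gateaux-type differentiability and does not imply Fr\'echet differentiability: the continuous function $f(x_1,x_2)=x_1^3x_2/(x_1^4+x_2^2)$, $f(0)=0$, has every directional derivative at the origin equal to zero, yet $f(x_1,x_1^2)=x_1/2$, so it is not differentiable at $0$; one can even arrange such failures with nonempty superdifferential, so knowing $y\in D^+u(x)$ does not rescue the inference. This matters because it is exactly the one part of the $D^-$ statement that your step-one inclusion does not already give: that inclusion yields $D^-u(x)\subset\{y\}$, but not $y\in D^-u(x)$. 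The repair is available inside your own machinery: run your lower bound with \emph{arbitrary} increments $h$ rather than radial ones. Taking $b_h\in M(x+h)$, hypothesis \eqref{eq:gunifdiff} gives $u(x+h)-u(x)\ge D_xg(x,b_h)\cdot h-|h|\omega_1(|h|)$; your cluster-point argument (which never used that the direction was fixed) shows every cluster point of $b_h$ as $h\to0$ lies in $M(x)$, so the continuity assumption in \eqref{gbcont} together with $Y(x)=\{y\}$ forces $D_xg(x,b_h)\to y$. Hence $\liminf_{h\to0}|h|^{-1}\bigl(u(x+h)-u(x)-y\cdot h\bigr)\ge0$, i.e. $y\in D^-u(x)$, which closes the case directly (and, combined with $y\in D^+u(x)$, gives the differentiability you wanted as a byproduct rather than as an input).
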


	We now apply the previous propositions to the Tukey depth in order to characterize its super- and subdifferentials.
	
	\begin{theorem} \label{thm:semi-diff-equalities}
	  Suppose that $\mu$ has a bounded and continuous density $\rho$ with an open and bounded support $S \subset \mathbb{R}^n$ and let us consider the Tukey depth defined by $T(x) = \inf_{\nu \in S^{n-1}} Z(x,\nu)$, as in (\ref{eqn:Tukey-def-min}).
		Then we have that
		\begin{equation}
		\begin{array}{ccl}
		D^+T(x) & = & \overline{\mbox{co}} \left\{\displaystyle - \nu_i \int_{(\xi-x) \cdot \nu_i =0} \rho(\xi) \,d\mathcal{H}^{n-1}(\xi) \left| \nu_i \in \displaystyle \underset{\nu \in \mathbb{S}^{n-1}}{\argmin}\, Z(x,\nu) \right. \right\} \\ \\
		D^-T(x) & = & \left\{ \begin{array}{ccl}
		\displaystyle - \nu \int_{(\xi-x) \cdot \nu =0} \rho(\xi) \,d\mathcal{H}^{n-1}(\xi) & \mbox{ if } & \underset{ \nu \in \mathbb{S}^{n-1}}{\argmin} \, Z(x,\nu)= \{\nu\} \\ \\
		\emptyset & \mbox{ if }  &  \underset{ \nu \in \mathbb{S}^{n-1}}{\argmin} \, Z(x,\nu) \mbox{ is not a singleton}
		\end{array} \right.
		\end{array}
		\end{equation}
	\end{theorem}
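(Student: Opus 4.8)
The plan is to recognize $T$ as a \emph{marginal} function and apply Propositions \ref{prop:supsub1} and \ref{prop:supsub2} with the identifications $g(x,\nu) = Z(x,\nu)$ and $B = \mathbb{S}^{n-1}$, so that $u(x) = \inf_{\nu \in \mathbb{S}^{n-1}} g(x,\nu) = T(x)$ as in \eqref{eqn:Tukey-def-min}. Since $\mathbb{S}^{n-1} \subset \mathbb{R}^n$ is compact, the conclusion will follow by reading off \eqref{eq:semidiffcharac} once the gradient $D_x Z(x,\nu)$ is identified; all the genuine work consists of verifying the structural hypotheses on $g = Z$.

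First I would check the hypotheses of Proposition \ref{prop:supsub1}. Boundedness is immediate, since $0 \leq Z(x,\nu) \leq 1$ because $\mu$ is a probability measure. For the uniform continuity \eqref{eq:gunifcont}, I observe that for fixed $\nu$ the difference $Z(x,\nu) - Z(y,\nu)$ equals the signed $\mu$-mass of the slab bounded by the parallel hyperplanes $\{z \cdot \nu = x\cdot\nu\}$ and $\{z \cdot \nu = y \cdot \nu\}$; this slab has width $|x\cdot\nu - y\cdot\nu| \leq |x - y|$ in the $\nu$-direction. Using that $\rho$ is bounded and supported on a set of diameter $\mbox{diam}(\Omega)$, the slab mass is at most $C\,\mbox{diam}(\Omega)^{n-1}\,|x-y|$, with a constant independent of $\nu$. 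This gives \eqref{eq:gunifcont} with a linear modulus, so $T \in C(\Omega)$ and the inclusions of Proposition \ref{prop:supsub1} hold.

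Next I would verify the hypotheses of Proposition \ref{prop:supsub2}. The gradient $D_x Z(x,\nu)$ has already been identified in \eqref{eqn:Tukey-grad} as
\[
D_x Z(x,\nu) = -\,\nu \int_{(\xi - x)\cdot\nu = 0} \rho(\xi)\,d\mathcal{H}^{n-1}(\xi),
\]
which is exactly the vector appearing in the statement. The lower semicontinuity of $\nu \mapsto Z(x,\nu)$ required in \eqref{gbcont} follows (indeed, full continuity holds) from the absolute continuity of $\mu$: every hyperplane has $\mu$-measure zero, so the halfspace mass depends continuously on $\nu$. The two remaining conditions---the uniform differentiability \eqref{eq:gunifdiff} and the continuity of $\nu \mapsto D_x Z(x,\nu)$---are where the real difficulty lies, and I expect these to be the main obstacle. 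For \eqref{eq:gunifdiff}, writing $Z(x+h,\nu) - Z(x,\nu)$ as a slab mass and applying the coarea formula reduces the remainder to $\int_{x\cdot\nu}^{(x+h)\cdot\nu}\big[\,\sigma_\nu(t) - \sigma_\nu(x\cdot\nu)\big]\,dt$, where $\sigma_\nu(t) := \int_{z\cdot\nu = t} \rho\,d\mathcal{H}^{n-1}$; translating one slice onto the other and using the uniform continuity of $\rho$ on its compact support bounds this by $|h|\,C\,\mbox{diam}(\Omega)^{n-1}\,\omega(|h|)$, uniformly in $\nu$, which is precisely \eqref{eq:gunifdiff}. The continuity of $\nu \mapsto D_x Z(x,\nu)$ reduces to continuity of the slice integral under rotation of the slicing hyperplane; I would establish this by parametrizing $\{(\xi-x)\cdot\nu = 0\}$ through a locally continuous orthonormal frame for $\nu^\perp$, changing variables, and passing to the limit by dominated convergence using the boundedness and continuity of $\rho$. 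These estimates are close in spirit to the slice comparison already carried out in the proof of Theorem \ref{th:ComparisonPrinciple}.

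With all hypotheses in force, Proposition \ref{prop:supsub2} yields $Y(x) = \{ D_x Z(x,\nu) : \nu \in M(x)\}$ with $M(x) = \argmin_{\nu \in \mathbb{S}^{n-1}} Z(x,\nu)$, and substituting the explicit form of $D_x Z(x,\nu)$ into \eqref{eq:semidiffcharac} produces exactly the claimed identities: $D^+ T(x) = \overline{\mbox{co}}\, Y(x)$ is the stated convex hull, while $D^- T(x)$ is the single gradient vector when the minimizing direction is unique and empty otherwise.
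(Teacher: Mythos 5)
Your proposal is correct and follows essentially the same route as the paper: both reduce the theorem to verifying the hypotheses of Propositions \ref{prop:supsub1} and \ref{prop:supsub2} for the marginal function $g = Z$ on $B = \mathbb{S}^{n-1}$ (boundedness, uniform continuity in $x$, uniform differentiability, lower semicontinuity of $\nu \mapsto Z(x,\nu)$, and continuity of $\nu \mapsto D_x Z(x,\nu)$) and then read off \eqref{eq:semidiffcharac}. Your technical variants for the two harder hypotheses---the coarea/slice-translation bound in place of the paper's mean-value-theorem argument, and a local orthonormal frame with dominated convergence in place of the paper's rotation matrix $R_{\mu\nu}$---are equivalent in substance to the paper's estimates.
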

	
	\begin{proof}
		The uniform continuity assumption (\ref{eq:gunifcont}) on $Z(x,\nu)$ is satisfied because $\rho$ is bounded and has bounded support
		\begin{equation}
		\begin{array}{ccl}
		| Z(x,\nu)-Z(y,\nu)  | & = & \displaystyle \left| \int_{(\xi-x) \cdot \nu \geq 0} \rho (\xi) d\xi - \int_{(\xi-y) \cdot \nu \geq 0} \rho (\xi) d \xi \right| = \left| \int_{y \cdot \nu \leq \xi \cdot \nu \leq x \cdot \nu } \rho (\xi) d \xi\right| \\ \\
		& \leq & \displaystyle  M \int_{y \cdot \nu \leq \xi \cdot \nu \leq x \cdot \nu } \mathbf{1}_{S} \, d \xi \lesssim M (\mbox{diam}(S))^{n-1} |(x-y) \cdot \nu|
		\end{array}
		\label{eqn:ZunifContX}
		\end{equation}
		which is uniform in $\nu$, because $\nu \in \mathbb{S}^{n-1}$. Here $M$ is an upper bound for $\rho$ and the symbol $\lesssim $ indicates that the inequality holds up to a constant.

		The uniform differentiability assumption on $Z$ \eqref{eq:gunifdiff} is also satisfied as $\rho$ is compactly supported and bounded.	As derived in \eqref{eqn:DirDerZ}, the derivative of $Z(x,\nu)$ in the direction of a unit vector $h$ is given by

		\begin{equation}
		D_h Z (x,\nu) = - \displaystyle h \cdot \nu \int_{(\xi-x) \cdot \nu =0} \rho(\xi) \,d\mathcal{H}^{n-1}(\xi),
		\end{equation}
		from which we have that 
		\begin{equation}
		D_x Z(x,\nu)= \textcolor{red}{-} \displaystyle \nu \int_{(y-x) \cdot \nu =0} \rho(y) \,d\mathcal{H}^{n-1}(y) \mbox{.}
		\end{equation}	
		We can now show that (\ref{eq:gunifdiff}) holds uniformly in $b$. Note that by the mean value theorem we have that
		\begin{equation}
		Z(x+h,b)= Z(x,b) + D_x Z(\eta,b) \cdot h
		\end{equation}
		for some $\eta= x +c h $ in the segment joining $x$ and $x+h$. Then we have that
		\begin{equation}
		\begin{array}{ccl}
		|Z(x+h,b)-Z(x,b)-D_x Z(x,b) \cdot h| & = & |D_x Z(\eta,b) \cdot h - D_x Z(x,b) \cdot h| \\
		&  \leq & |h| |D_x Z(\eta,b)-D_x Z(x,b)| .
		\end{array}
		\end{equation}
For the right-hand side we can write
		\begin{equation}
		\begin{array}{ccl}
		|D_x Z(\eta,b)-D_x Z(x,b)|  & = & \displaystyle  \left| b \int_{(\xi - \eta) \cdot b = 0} \rho (\xi) \,d\mathcal{H}^{n-1} (\xi) -  b \int_{(\xi - x) \cdot b = 0} \rho (\xi) \,d\mathcal{H}^{n-1} (\xi)\right| \\
		& \leq & |b| \displaystyle \left|\int_{(\zeta-x) \cdot b = 0} \rho (\zeta - x + \eta) \,d\mathcal{H}^{n-1}(\zeta) - \int_{(\xi - x) \cdot b = 0} \rho (\xi) \,d\mathcal{H}^{n-1}(\xi)  \right| \\
		& = & |b| \displaystyle \left|\int_{(\xi-x) \cdot b = 0} \rho (\xi - x + \eta) \,d\mathcal{H}^{n-1}(\xi) - \int_{(\xi - x) \cdot b = 0} \rho (\xi) \,d\mathcal{H}^{n-1}(\xi)  \right| \\
		& = & |b| \displaystyle \left| \int_{(\xi-x) \cdot b = 0} [\rho (\xi - x + \eta) - \rho (\xi)] \,d\mathcal{H}^{n-1} (\xi) \right| ,
		\end{array}
		\end{equation}
		where we have used the change of variables $\zeta = \xi + x - \eta$. Now, if we denote the modulus of continuity of $\rho$ by $\omega_1$ (i.e. $|\rho (y)-\rho(x)| \leq \omega_1 (|y-x|)$) then we have that
		\begin{equation}
		|D_x Z(\eta,b)-D_x Z(x,b)| \leq |b| |h| \omega_1 (ch) (\mbox{diam} (S))^{n-1} \mbox{,}
		\end{equation}
		which is uniform in $b$ as $|b|=1$. This shows that $Z$ is uniformly diferentiable in $b$.
		
		In order to use Theorem \ref{thm:semi-diff-equalities} we only need to show that $Z(x,\nu)$ is lower semicontinuous in $\nu$ and that $D_x Z(x,\nu)$ is continuous in $\nu$. The first result comes from the fact that the mass between two hyperplanes whose normal vectors are close is necessarily small if both the density and its support are bounded. This result is analogous to the estimate \eqref{eqn:ZunifContX}, that states that the mass between two hyperplanes with the same normal varies continuously with the separation between them.
		
		To show that $D_x(x,\nu)$ is continuous in $\nu$, let us take two arbitrary directions $\nu,\mu$. We have
		
		\begin{equation}
		\begin{array}{cl}
		& |D_x Z(x,\nu)- D_x Z(x,\mu)| \\ = & \displaystyle \left| \nu \int_{(\xi-x) \cdot \nu =0} \rho(\xi) \,d\mathcal{H}^{n-1}(\xi) - \mu \int_{(\xi-x) \cdot \mu =0} \rho(\xi) \,d\mathcal{H}^{n-1}(\xi) \right| \\
		= & \displaystyle |\nu \left( \int_{(\xi-x) \cdot \nu =0} \rho(\xi) \,d\mathcal{H}^{n-1}(\xi) - \int_{(\xi-x) \cdot \mu =0} \rho(\xi) \,d\mathcal{H}^{n-1}(\xi) \right) +(\nu-\mu) \int_{(\xi-x) \cdot \mu =0} \rho(\xi) \,d\mathcal{H}^{n-1}(\xi)| \\
		\lesssim & \displaystyle \left| \int_{(\xi-x) \cdot \nu =0} \rho(\xi) \,d\mathcal{H}^{n-1}(\xi) - \int_{(\xi-x) \cdot \mu =0} \rho(\xi) \,d\mathcal{H}^{n-1}(\xi) \right| + |\nu-\mu| M (\mbox{diam}(S))^{n-1} \\
		= & \displaystyle \left| \int_{(\xi-x) \cdot \mu =0} \rho(R_{\mu \nu} (\xi-x)+x) \,d\mathcal{H}^{n-1}(\xi)  - \int_{(\xi-x) \cdot \mu =0} \rho(\xi) \,d\mathcal{H}^{n-1}(\xi) \right| + |\nu-\mu| M (\mbox{diam}(S))^{n-1} \\
		= & \displaystyle \left| \int_{(\xi-x) \cdot \mu =0} (\rho(R_{\mu \nu} (\xi-x)+x)  - \rho(\xi)) \,d\mathcal{H}^{n-1}(\xi) \right| + |\nu-\mu| M (\mbox{diam}(S))^{n-1} 
		\end{array}
		\end{equation}
		where $R_{\mu \nu}$ is a rotation matrix that takes $\mu$ to $\nu$ and $M$ is an upper bound for $\rho$. By the continuity of the mapping $\mu \mapsto R_{\mu \nu} (\xi-x) + x$ and the continuity of $x \mapsto \rho(x)$ we can choose $\mu$ close enough to  $\nu$ so that for a given $\varepsilon > 0$, $|(\rho(R_{\mu \nu} (\xi-x)+x)  - \rho(\xi))| < \varepsilon$. We then have
		\begin{equation}
		|D_x Z(x,\nu)- D_x Z(x,\mu)| \leq \epsilon (\mbox{supp}(\rho))^{n-1}  + |\nu-\mu| M (\mbox{diam}(S))^{n-1},
		\end{equation}
		which means that the mapping $\nu \rightarrow Z(x,\nu)$ is continuous.
		We then have, according to Proposition \ref{prop:supsub2},
		\begin{equation}
		\begin{array}{ccl}
		D^+T(x) & = & \overline{\mbox{co}} \left\{\displaystyle - \nu_i \int_{(\xi-x) \cdot \nu_i =0} \rho(\xi) \,d\mathcal{H}^{n-1}(\xi) \left| \nu_i \in \underset{\nu \in \mathbb{S}^{n-1}}{\argmin} \, Z(x,\nu) \right. \right\} \\ \\
		D^-T(x) & = & \left\{ \begin{array}{ccl}
		- \displaystyle \nu \int_{(\xi-x) \cdot \nu =0} \rho(\xi) \,d\mathcal{H}^{n-1}(\xi) & \mbox{ if } & \underset{ \nu \in \mathbb{S}^{n-1}}{\argmin} \, Z(x,\nu)= \{\nu\} \\ \\
		\emptyset & \mbox{ if }  &  \underset{ \nu \in \mathbb{S}^{n-1}}{\argmin} \, Z(x,\nu) \mbox{ is not a singleton}
		\end{array} \right.
		\end{array}
		\end{equation}
		\label{eqn:TexplicitSemidiffs}
	\end{proof}
	We notice that the formula for $D^-T$ is analogous to the observation in one dimension that the Tukey depth has no ``downward pointing corners''.  It follows from the definitions of super- and subdifferential \eqref{eqn:semidiffs} that if $D^+T$ and $D^-T$ are nonempty, then T is differentiable (Lemma 1.8 in \cite{bardi2008optimal}). The explicit formula for $D^-T$ \eqref{eqn:TexplicitSemidiffs} then implies that $T$ is a supersolution of \eqref{eqn:HequationTukey}: If $D^-T$ is nonempty at some point $x$, then $T$ is differentiable and therefore the condition for $T$ to be a supersolution \eqref{def:viscositySol} is satisfied with an equality. On the other hand, if $D^-T$ is empty at some point $x$ then the condition for $T$ to be a supersolution \eqref{eqn:semidiffs} is trivially satisfied at $x$. Thus, we have the following corollary of Theorem \eqref{thm:semi-diff-equalities}. 
	\begin{corollary}
		Under the assumptions of Theorem \ref{th:exuniq}, the Tukey depth $T(x)$ is a supersolution of Equation \eqref{eqn:HequationTukey}.
	\label{cor:TukeySuper}
	\end{corollary}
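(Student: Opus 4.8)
The plan is to derive the corollary directly from the explicit formula for the subdifferential $D^-T(x)$ established in Theorem \ref{thm:semi-diff-equalities}. Recall that, by Definition \ref{def:viscositySol}, to show that the lower semicontinuous function $T$ is a supersolution of \eqref{eqn:HequationTukey} it suffices to verify that $H^*(x,p) \geq 0$ for every $x \in \Omega$ and every $p \in D^-T(x)$. Theorem \ref{thm:semi-diff-equalities} tells us that $D^-T(x)$ is either empty or a single point, so there are only two cases to treat, and in neither case is any further hard analysis required.

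First, if $D^-T(x) = \emptyset$ there is nothing to check and the supersolution inequality is vacuously satisfied at $x$. Second, suppose $\argmin_{\nu \in \mathbb{S}^{n-1}} Z(x,\nu) = \{\nu\}$ is a singleton, so that $D^-T(x) = \{p\}$ with $p = -\nu \int_{(\xi-x)\cdot\nu = 0}\rho(\xi)\,d\mathcal{H}^{n-1}(\xi)$. Since $|\nu| = 1$, we have $|p| = \int_{(\xi-x)\cdot\nu = 0}\rho(\xi)\,d\mathcal{H}^{n-1}(\xi)$. The key observation is that, by assumption \eqref{eq:sliceassumption}, this slice integral is bounded below by $\delta(x) > 0$ for every interior point $x \in \Omega$, and hence $p \neq 0$.

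Because $H$ is continuous at every $(x,p)$ with $p \neq 0$, at such a point the upper semicontinuous envelope coincides with $H$ itself, so $H^*(x,p) = H(x,p)$, and it remains only to evaluate $H(x,p)$ directly. Writing $p/|p| = -\nu$, the hyperplane $\{\xi : (\xi - x)\cdot \tfrac{p}{|p|} = 0\}$ appearing in \eqref{eqn:HequationTukey} is exactly the hyperplane $\{\xi : (\xi-x)\cdot\nu = 0\}$, so the slice integral in $H$ equals $|p|$; thus $H^*(x,p) = H(x,p) = |p| - |p| = 0 \geq 0$, and the supersolution inequality holds with equality. Combining the two cases establishes the corollary.

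The only genuinely delicate point is the discontinuity of $H$ at $p = 0$: there $H^*(x,0) = -\liminf_{q\to 0}\int_{(\xi-x)\cdot q=0}\rho\,d\mathcal{H}^{n-1}$ can be strictly negative, which would break the supersolution property outright. The role of assumption \eqref{eq:sliceassumption} is precisely to exclude this in the interior, by guaranteeing that whenever $D^-T(x)$ is nonempty the corresponding gradient has strictly positive length, so that we always land in the region where $H$ is continuous. I expect this to be the main conceptual obstacle, but it is already dispatched by the hypotheses together with the subdifferential formula of Theorem \ref{thm:semi-diff-equalities}, which is why the proof reduces to the short case analysis above.
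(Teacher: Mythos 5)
Your proof is correct and follows essentially the same route as the paper: both arguments reduce the corollary to the explicit characterization of $D^-T(x)$ in Theorem \ref{thm:semi-diff-equalities}, treating the empty case as vacuous and the singleton case by showing the equation holds there with equality. If anything, your treatment is slightly more careful than the paper's (which passes through differentiability of $T$ and the formal derivation of Section \ref{sec:Derivation}), since you explicitly use assumption \eqref{eq:sliceassumption} to rule out $p=0$ and thus justify $H^*(x,p)=H(x,p)$; note that this last step could be shortened further by observing that $H^* \geq H$ always, so $H(x,p)=0$ alone already gives the supersolution inequality.
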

	
	The previous corollary reveals an important property: that the viscosity solution of \eqref{eqn:TukeyEqn} will necessarily give a lower bound for the Tukey depth. This holds without any structural assumptions on the distribution (besides weak smoothness assumptions). In certain situations there are high fidelity algorithms for solving differential equations of the form \eqref{eqn:TukeyEqn}; we provide some illustrations in the last section of the paper. Although there are many computational considerations in obtaining a solution to \eqref{eqn:TukeyEqn}, especially in the context of data clouds, Corollary \ref{cor:TukeySuper} suggests a promising new avenue for constructing lower bounds on Tukey depths.

\subsection{Tukey depths on convex domains are subsolutions}

Up to this point, we have demonstrated that the Tukey depth is a supersolution to the Hamilton-Jacobi equation \eqref{eqn:TukeyEqn}, and that there exists a unique viscosity solution to the equation. However, it is not yet clear under what conditions the Tukey depth is equal to the viscosity solution. The following two examples, one positive and one negative, are illustrative.

\begin{example}
  Consider a uniform distribution on $[0,1]\times [0,1]$. The Tukey depth is explicitly given by \cite{nagy2019halfspace}
  \[
    T(x,y) = \min 2\left( xy,(1-x)y,x(1-y),(1-x)(1-y) \right).
  \]
  We may directly compute that $T$ is differentiable on the set $x\neq 1/2,y\neq 1/2$, and has gradient given by
  \[
    \nabla T = \begin{cases} 2(y,x) &\text{ for } x<1/2,y<1/2 \\
      2(1-y,-x) &\text{ for } x < 1/2,y>1/2 \\
      2(-y,1-x) & \text{ for } x >1/2, y<1/2 \\
    2(y-1,x-1) &\text{ for } x > 1/2, y> 1/2. \end{cases}
  \]
  The superdifferential on the lines $x = 1/2$ and $y=1/2$ is given by the convex hull of the gradients approaching those points (see Theorem \ref{thm:semi-diff-equalities}).

  In order to prove that $T$ is a subsolution of the HJB equation, it suffices to show, at points where either $x = 1/2$ or $y=1/2$, that for any $v \in D^+ T$ we have that
  \[
    |v| - \int_{v \cdot x = v\cdot y} \rho(y) \,\,d\mathcal{H}^{n-1}(y) \leq 0.
  \]
  By symmetry, it suffices to consider the case where $x=1/2$ and $0 < y < 1/2$, and the case $x = 1/2, y=1/2$. In the first case, we have that $D^+ T = co( 2(y,1/2),2(-y,1/2))$. Thus to verify the subsolution inequality at the base point $(1/2,y)$, we need only consider $v = (2t,1)$, for $t \in (-y,y)$. We then observe that the integral term in the subsolution inequality, with a particular choice of $v = (t,1/2)$, will be equal to the length of the intersection of the square with the line through $(1/2,y)$ with direction given by $(1/2,-t)$. This length will be given by the distance between $(0,y-t)$ to $(1,y+t)$ (i.e. the boundary points of that line), and that distance is hence given by $\sqrt{1+(2t)^2}$. But this is exactly the length of $v$, which implies that the subsolution inequality is actually an equality for all permissible $v$'s. 

    For the point $(1/2,1/2)$, we have that $D^+ T = co((\pm 1, \pm 1) )$. Using exactly the same computation as above, we have that the sub-solution inequality is satisfied (with equality) for any point in $\partial D^+ T$. For $v$ on the interior of $D^+ T$ the inequality also holds, without equality, as the norm term will have decreased (as compared to a point in $\partial D^+ T$), while the negative integral term will have remained unchanged.
\end{example}

The previous example suggests that, in at least some cases, the Tukey depth does coincide with the viscosity solution of the HJB equation. We notice that the example does have non-trivial singularities along lines, and hence it seems unlikely to be a coincidence. However, the following example demonstrates that the Tukey depth is not always equal to the viscosity solution.

\begin{example}\label{ex:not-visc-sol}
	\begin{figure}[h]
		\centering
		\includegraphics[width=0.8\textwidth]{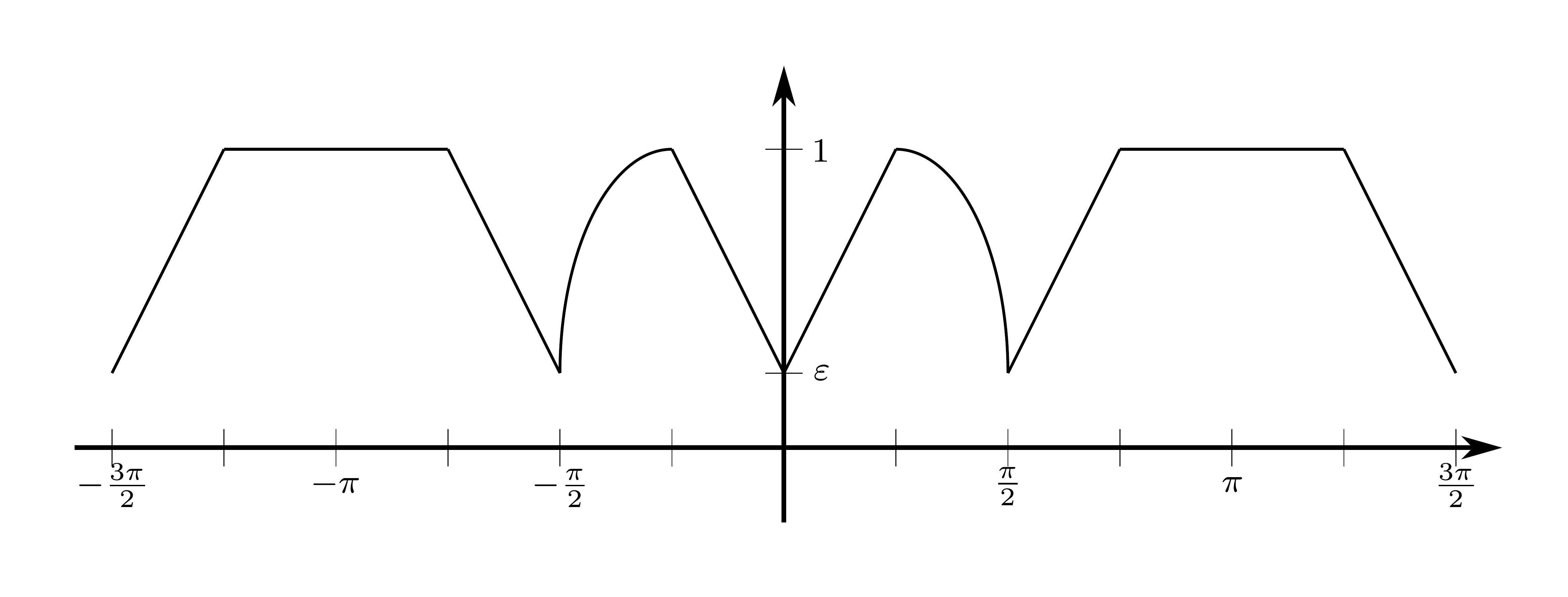}
		\caption{Definition of the function $\psi$ in Example \ref{ex:not-visc-sol}.}
		\label{fig:counterexample}
	\end{figure}
	
		In $\mathbb{R}^2$, define a function $\rho$ on $B(0,1)$ in polar coordinates as
		\begin{equation}
		\rho(r,\theta)=C \, r \, \psi(\theta) ,
		\end{equation} 
		where $\psi$ is the $2 \pi$-periodic function described in figure \ref{fig:counterexample} for some small $\varepsilon >0$ and $C$ is the normalization constant that makes $\rho$ a probability density on $B(0,1)$. We shall see that the Tukey depth $T$ is not a subsolution of the HJB equation \eqref{eqn:HequationTukey}.
		
		We can identify a vector $\nu \in \mathbb{S}^2$ by the angle $\phi$ it forms with the $x$-axis. For the mass on the halfspace defined by $\nu$ we have the expression
		\begin{equation}
		Z(0,\nu)= \int_{\phi -\frac{\pi}{2}}^{\phi +\frac{\pi}{2}} \int_{0}^{1} C \,r \rho(r,\theta) \, dr \, d \theta = \frac{C}{3} \int_{\phi -\frac{\pi}{2}}^{\phi +\frac{\pi}{2}} \psi(\theta) \, d \theta
		\end{equation}	
		where $\approx$ denotes equality up to a constant. By the continuity of $\psi$, $Z$ is differentiable with respect to $\phi$ and critical points of $Z$ come from finding the zeros of
		\begin{equation}
		\frac{d Z}{d \phi} = \frac{C}{3} \left[ \psi \left(\phi + \frac{\pi}{2} \right) - \psi \left(\phi - \frac{\pi}{2} \right) \right].
		\end{equation}
		Equivalently, we want to find pairs of points in the domain of $\psi$ that are $\pi$ units apart and at which $\psi$ attains the same value. Checking the sign of $\frac{d Z}{d \phi}$ with the help of the graph of $\psi$ we can see that, up to multiples of $2 \pi$, there are three distinct points corresponding to local minima: $-\frac{\pi}{4}$, $\frac{\pi}{4}$, and $\pi$. By comparing areas under the graph of  $\psi$, we find that $Z(-\frac{\pi}{4})=Z(\frac{\pi}{4}) < Z(\pi)$. According to theorem \ref{thm:semi-diff-equalities}, both $\nu_1= C \, (\cos (-\frac{\pi}{4}), \sin (-\frac{\pi}{4}))$ and $\nu_2= C\,(\cos(\frac{\pi}{4}),\sin(\frac{\pi}{4}))$ are in $\partial^+T(0)$ and therefore $v=(\frac{C}{\sqrt{2}},0)$, which is in the convex hull of $\nu_1$ and $\nu_2$, is also in $\partial^+T(0)$. However, we have that
		\begin{equation}
		H(0,v)= |v|- \int_{y \cdot v=0} \rho(y) \,d\mathcal{H}^{n-1} (y) =\frac{C}{\sqrt{2}}- \varepsilon \, C .
		\end{equation}
		Because $\varepsilon$ can be made as small as desired in the definition of $\psi$, $T$ cannot be a subsolution of \eqref{eqn:HequationTukey}.
\end{example}

In light of the previous examples, it would be desirable to characterize sufficient conditions that guarantee that the Tukey depth and the viscosity solution of \eqref{eqn:HequationTukey} match. Finding such a sufficient condition is not simple, and is a topic of our current work. However, we notice that in the previous example, the failure of the Tukey depth to be a viscosity solution is linked with the fact that the density has non-convexity: namely it has a ``valley'' between two regions of higher density. This contrasts with the fact that the uniform density on the square does not have such valleys. One then may guess that convexity may be connected to a sufficient condition that Tukey depths and viscosity solutions match. The following theorem provides a first step in this direction.

\begin{theorem}
  Suppose that $\rho$ is uniformly distributed on a convex domain in $\R^2$. Then the Tukey depth and the viscosity solutions are equal.
  \label{thm:convex-sub-sol}
\end{theorem}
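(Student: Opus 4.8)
The plan is to upgrade the one-sided information we already have: by Corollary \ref{cor:TukeySuper} the Tukey depth $T$ is always a supersolution of \eqref{eqn:HequationTukey}, and $T=0$ on $\partial\Omega$, so it suffices to show that in the uniform/convex/planar setting $T$ is also a \emph{subsolution}. Indeed, $T$ would then be a viscosity solution with the correct boundary data, and the uniqueness statement of Theorem \ref{th:exuniq} (equivalently Corollary \ref{cor:uniqueness}) would force $T$ to equal the unique viscosity solution. Thus the entire theorem reduces to verifying $H_*(x,v)\le 0$ for every $x\in\Omega$ and every $v\in D^+T(x)$. Writing $\rho\equiv c$ for the constant density and using the explicit description of $D^+T(x)$ from Theorem \ref{thm:semi-diff-equalities}, we have $D^+T(x)=\overline{\mathrm{co}}\{p_i\}$ with $p_i=-c\,L_i\,\nu_i$, where $\nu_i$ ranges over the minimizers of $Z(x,\cdot)$ and $L_i$ is the length of the chord $C_i$ cut by the line through $x$ orthogonal to $\nu_i$. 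Since $p_i\parallel\nu_i$, the subsolution inequality is exactly the geometric statement $|v|\le c\,L_K(v)$, where $L_K(v)$ is the length of the chord of the domain $K$ through $x$ perpendicular to $v$ (the case $v=0$ being immediate, as interior chords have positive length). Equivalently, I must show $D^+T(x)\subseteq R:=\{v:|v|\le c\,L_K(v)\}$.

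Two facts particular to the uniform planar setting drive the rest. First, differentiating $Z(x,\nu(\phi))$ in the rotation angle $\phi$ by the coarea computation of \eqref{eqn:DirDerZ} gives $\tfrac{c}{2}(s_+^2-s_-^2)$, where $[s_-,s_+]$ parametrizes the chord by signed arclength from $x$; hence every minimizing direction is critical, and criticality is equivalent to $s_+=-s_-$, i.e.\ to $x$ \emph{bisecting} the chord $C_i$. This is precisely where uniformity is used: for non-constant $\rho$ the criticality condition becomes a first-moment condition and bisection fails, consistent with the negative Example \ref{ex:not-visc-sol}. Second, because each minimizing chord is bisected by $x$, the convex polygon $P:=\mathrm{co}\{\,x\pm\tfrac{L_i}{2}\tau_i\,\}$, with $\tau_i=\nu_i^\perp$, is centrally symmetric about $x$, and by convexity of $K$ one has $P\subseteq K$.

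The payoff is that the chord geometry of $P$ is trivial. Being centrally symmetric about $x$, every chord of $P$ through $x$ has length equal to twice the radial function of $P$, so the ``chord-length body'' of $P$ is simply $2P$, hence convex; rotating by $\pm\pi/2$ and scaling by $c$ shows that $R_P:=\{v:|v|\le c\,L_P(v)\}$ is a convex set. Moreover $P\subseteq K$ gives $L_P\le L_K$ and therefore $R_P\subseteq R$, while the chord of $P$ perpendicular to $p_i$ is exactly the diagonal $C_i$, so that $|p_i|=c\,L_i=c\,L_P(p_i)$ and each extreme point $p_i$ lies on $\partial R_P$. Convexity of $R_P$ then yields $D^+T(x)=\overline{\mathrm{co}}\{p_i\}\subseteq R_P\subseteq R$, which is the subsolution inequality.

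The main obstacle, and the reason convexity of $K$ is genuinely needed, is exactly this last geometric step. One is tempted to prove directly that $R$ itself is convex, i.e.\ that the chord-length radial body of $K$ is convex, but this is \emph{false} for general convex $K$ and off-center $x$ (near a vertex of a triangle the chord-length polar curve develops reentrant corners). The resolution is to never work with $K$'s own chord body: one passes instead to the inscribed centrally symmetric polygon $P$ built from the minimizing chords, whose chord body $2P$ is automatically convex, and transfers the inequality through the monotonicity $R_P\subseteq R$. The remaining points are routine bookkeeping: arranging that the set of minimizing directions is closed so that $P$ is well defined even when there is a continuum of minimizers, handling differentiable points (where $D^-T=D^+T$ is a singleton and the equation holds with equality), and invoking the boundary condition on $\partial\Omega$.
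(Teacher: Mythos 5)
Your proposal is correct, and it shares the paper's overall skeleton: reduce to proving the subsolution property (then conclude via Corollary \ref{cor:TukeySuper}, the boundary values, and the uniqueness in Theorem \ref{th:exuniq}); invoke Theorem \ref{thm:semi-diff-equalities} so that only non-extreme points of $D^+T(x)$ need checking; and use the bisection property of minimizing chords for uniform planar densities, which is exactly the paper's Lemma \ref{lem:balanced-lines}. Where you genuinely diverge is in the central geometric step. The paper argues pairwise: it normalizes two extreme points to $p_1=2e_1$, $p_2=2e_2$, $x=0$ via an affine transformation (justified by Lemma \ref{lem:visc-affine-invariant}), compares the chord of the convex domain against chords of two inscribed triangles to handle convex combinations $tp_1+(1-t)p_2$, and then upgrades from pairs to all of $D^+T(x)$ using the two-dimensional boundary-structure Lemma \ref{lem:boundary-convex-2D} together with a star-shapedness/homogeneity observation. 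You instead build, in one stroke, the inscribed centrally symmetric polygon $P=\overline{\mathrm{co}}\{x\pm\tfrac{L_i}{2}\tau_i\}$ spanned by \emph{all} minimizing chords, observe that the chord-length body of a centrally symmetric planar body is just a dilate of the body (rotated by $\pi/2$ and scaled), hence convex, and finish by the monotonicity $L_P\le L_K$ under the inclusion $P\subseteq K$. This eliminates the affine normalization, the triangle comparison, the pairwise-then-extend structure, and the need for Lemma \ref{lem:boundary-convex-2D} and the star-shaped argument: convexity and closedness of $R_P$ swallow the whole closed convex hull $D^+T(x)$ at once. Your version is arguably cleaner and makes explicit where planarity enters (the identification of the perpendicular chord-length body with a rotated dilate of $P$ is specifically two-dimensional, since in higher dimensions the integral term is a slice integral rather than a chord length), which is the same obstruction the paper acknowledges in its remark about higher dimensions. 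Your side remark that one cannot work with the chord body of $K$ directly (it need not be convex for off-center $x$) correctly identifies why the detour through $P$ is needed, and the bookkeeping items you defer (closedness of the minimizer set, the $v=0$ case via $H_*$, continuity of the chord length in the direction for $v\neq 0$) are indeed routine.
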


The proof that we present centers on a direct geometric argument. We now state several lemmas which will be needed in proving the previous theorem.

\begin{lemma}
  Let $\rho$ be continuous. The viscosity solution to Equation \eqref{eqn:TukeyEqn} is invariant under invertible affine transformations.
  \label{lem:visc-affine-invariant}
\end{lemma}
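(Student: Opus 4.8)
The plan is to avoid manipulating $u$ directly and instead to use the uniqueness of viscosity solutions (Corollary \ref{cor:uniqueness}): I would show that the pushed-forward candidate $\tilde u := u \circ A^{-1}$ is itself a viscosity solution of the correspondingly transformed equation, so that uniqueness forces it to be \emph{the} viscosity solution of that equation, which is exactly the asserted invariance. Write an invertible affine map as $A(x) = Mx + b$ with $M$ invertible, let $\mu$ have density $\rho$ on $\Omega = co(S)$, and let $\tilde\mu = A_{\#}\mu$ be its pushforward, which is absolutely continuous with density $\tilde\rho(z) = |\det M|^{-1}\rho(A^{-1}(z))$ supported on $A(\overline{S})$; since affine maps commute with convex hulls, its convex hull is $\tilde\Omega = A(\Omega)$, with $\partial\tilde\Omega = A(\partial\Omega)$. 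As $u = 0$ on $\partial\Omega$ gives $\tilde u = 0$ on $\partial\tilde\Omega$, it remains only to check that $\tilde u$ solves the transformed equation in the viscosity sense.

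The heart of the argument is a pointwise identity between the Hamiltonians $H_\rho(x,p) := |p| - I_\rho(x,p)$ and $H_{\tilde\rho}$, where $I_\rho(x,p) := \int_{p \cdot (\xi - x) = 0} \rho(\xi)\,d\mathcal{H}^{n-1}(\xi)$ denotes the slice integral. Writing $z = A(x)$, the chain rule shows that a $z$-gradient $q$ corresponds to the $x$-gradient $p = M^T q$. I would then change variables by $\eta = A(\xi)$ in the slice integral for $\tilde\rho$: the source hyperplane $\{p \cdot (\xi - x) = 0\}$ maps onto the target hyperplane $\{q \cdot (\eta - z) = 0\}$, the $(n-1)$-dimensional Hausdorff measure scales by the factor $|\det M|\,|M^{-T}\hat p|$ (where $\hat p = p/|p|$), and the Jacobian $|\det M|^{-1}$ coming from $\tilde\rho$ cancels the $|\det M|$. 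Using $|M^{-T}\hat p| = |q|/|p|$, this yields
\begin{equation}
I_{\tilde\rho}(z,q) = \frac{|q|}{|p|}\,I_\rho(x,p), \qquad \text{hence} \qquad H_{\tilde\rho}(z,q) = \frac{|q|}{|p|}\,H_\rho(x,M^T q).
\end{equation}
Because $|q|/|p| > 0$, the two Hamiltonians agree in sign; and since the factor $|q'|/|p'|$ is continuous and positive for $q' \neq 0$, it may be pulled out of the relevant $\limsup$ and $\liminf$, so the same positive-multiple identity holds for the semicontinuous envelopes $H_{\tilde\rho,*}$ and $H_{\tilde\rho}^*$ at every $q \neq 0$.

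With this identity the viscosity transfer is routine through the test-function formulation (Definition \ref{def:viscositySolEquiv}). If $\varphi \in C^1(\tilde\Omega)$ and $\tilde u - \varphi$ has a local maximum at an interior point $z_0$, then $u - \varphi \circ A$ has a local maximum at $x_0 = A^{-1}(z_0)$, with $D(\varphi \circ A)(x_0) = M^T D\varphi(z_0)$; the subsolution property of $u$ gives $H_{\rho,*}(x_0, M^T D\varphi(z_0)) \le 0$, and multiplying by the positive factor $|D\varphi(z_0)|/|M^T D\varphi(z_0)|$ yields $H_{\tilde\rho,*}(z_0, D\varphi(z_0)) \le 0$. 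The supersolution property transfers identically using $H^*$ and local minima, so $\tilde u$ is a viscosity solution at every point where the gradients involved are nonzero.

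I expect the only point requiring care to be the degenerate gradient $q = 0$ (equivalently $p = 0$), where $\hat p$ and the factor $|M^{-T}\hat p|$ are undefined, so the identity does not directly apply. On the subsolution side this is harmless: if $0 \in D^+\tilde u(z_0)$, then $H_{\tilde\rho,*}(z_0,0) = -\limsup_{q'\to 0} I_{\tilde\rho}(z_0,q') \le 0$ automatically, since the slice integral is nonnegative. On the supersolution side I would note that $0$ can never belong to $D^-\tilde u(z_0)$ at an interior point: the transported slice assumption \eqref{eq:sliceassumption} holds for $\tilde\rho$ because slice integrals transform by strictly positive factors, and then --- exactly as in the proof of Theorem \ref{th:ComparisonPrinciple} --- a vanishing subdifferential element at an interior point would force the slice integral to vanish there, contradicting \eqref{eq:sliceassumption}. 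With the degenerate case dispatched, the nondegenerate identity shows that $\tilde u$ is a viscosity solution of the transformed problem, whose hypotheses are met since \eqref{eq:sliceassumption} is preserved; uniqueness (Corollary \ref{cor:uniqueness}) then identifies $\tilde u = u \circ A^{-1}$ as its viscosity solution, which is precisely the claimed affine invariance.
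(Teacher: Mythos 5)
Your proposal is correct and follows essentially the same route as the paper: both prove the key scaling identity for the slice integral under the affine change of variables (the paper derives the Hausdorff-measure factor via a slab-limit computation, you quote it directly), obtaining $H_{\tilde\rho}(z,q) = \tfrac{|q|}{|p|}\,H_\rho(x,M^T q)$ with $p = M^T q$, and then transfer the sub/supersolution inequalities through the affine correspondence of semidifferentials/test functions, the positive factor preserving signs. If anything, you are more careful than the paper's own proof, which does not explicitly address the semicontinuous envelopes or the degenerate gradient $q=0$; your added appeal to uniqueness (Corollary \ref{cor:uniqueness}) is a harmless reframing of the same content.
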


\begin{proof}
  If $u$ is a viscosity solution corresponding to a density $\rho$, and $L$ is an invertible affine transformation, we let $\tilde \rho(\tilde{x}) = \rho(L(\tilde{x})) det(D L)$, and define $\tilde u(\tilde{x}) = u(L(\tilde{x}))$. As in the rest of the paper, we are treating all vectors as column vectors. Using the definition of the upper and lower semi-differentials (see, e.g., Chapter II in \cite{bardi2008optimal}), we then have that $D^\pm \tilde u = (DL)^T \, D^\pm u$, meaning that if $p \in D^+ u$ then $(DL)^T p \in D^+ \tilde u$. Hence for a given $\tilde p \in D^+ \tilde u$ we have
  \[
    |\tilde p| = |(DL)^T p|
  \]
  for a $p \in D^+u$ at some point $x$.
  
  On the other hand, using a change of variables $\tilde \xi = L^{-1}(\xi)$, and the continuity of $\rho$, we may compute
  \begin{align*}
    \int_{\tilde p\cdot(\tilde \xi-L^{-1}(x))=0} \tilde \rho(\tilde \xi)\,d\mathcal{H}^{n-1}(\tilde\xi) &= \lim_{h \to 0} \frac{|\tilde p|}{2h} \int_{|\tilde p\cdot(\tilde \xi-L^{-1}(x))|\leq h} \tilde \rho(\tilde \xi)\,d \mathcal{L}^n(\tilde\xi) \\
    &  = \lim_{h \to 0} \frac{|\tilde p|}{2h} \int_{|\tilde p\cdot(\tilde \xi-L^{-1}(x))|\leq h} \rho(L(\tilde \xi)) \det (DL)\,d \mathcal{L}^n(\tilde\xi) \\
    &= \lim_{h \to 0} \frac{|\tilde p|}{|p|}\frac{|p|}{2h} \int_{| ((DL)^T p) \cdot( L^{-1}(\xi)-L^{-1}(x))|\leq h} \rho( \xi)\,d \mathcal{L}^n(\xi) \\
    &= \lim_{h \to 0} \frac{|(DL)^T p|}{|p|}\frac{|p|}{2h} \int_{| p \cdot((DL) (DL)^{-1}(\xi-x))|\leq h} \rho( \xi)\,d \mathcal{L}^n(\xi) \\
    &= \lim_{h \to 0} \frac{|(DL)^T p|}{|p|}\frac{|p|}{2h} \int_{| p \cdot(\xi-x)|\leq h} \rho( \xi)\,d \mathcal{L}^n(\xi) \\
     &= \frac{|(DL)^T p|}{|p|} \int_{p \cdot (\xi-x)=0} \rho(\xi)  \,d\mathcal{H}^{n-1}(\xi).
  \end{align*}
  As $u$ is a subsolution, this implies that
  \[
    |\tilde p| - \int_{\tilde p\cdot(\tilde \xi-L(x))=0} \tilde \rho(\tilde \xi)\,d\tilde\xi = \frac{|(DL)^T p|}{|p|} \left(|p| - \int_{p \cdot (\xi-x)=0} \rho(\xi)  \,d\mathcal{H}^{n-1}(\xi)\right) \leq 0
  \]
  An analogous argument applies for supersolutions, which concludes the proof.
\end{proof}

We recall that an extreme point $w$ of a closed, convex set $C$ is a point so that $w$ cannot be represented as a convex combination of two other points in $C$.

\begin{lemma}
  Let $w$ be a non-zero extreme point of $D^+T(x)$. Then we have that
  \[
    |w|-\int_{w\cdot(\xi-x) = 0} \rho(\xi) \,d\mathcal{H}^{n-1}(\xi) = 0.
  \]
  \label{lem:extreme-points}
\end{lemma}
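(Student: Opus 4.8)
The plan is to read off the answer from the explicit description of the superdifferential in Theorem \ref{thm:semi-diff-equalities} and then propagate the desired identity from the generating vectors to the extreme point $w$ using the structure theory of extreme points of convex hulls. Introduce the shorthand $g(\nu) := \int_{(\xi-x)\cdot\nu = 0}\rho(\xi)\,d\mathcal{H}^{n-1}(\xi)$ and $M(x) := \argmin_{\nu\in\mathbb{S}^{n-1}} Z(x,\nu)$, so that Theorem \ref{thm:semi-diff-equalities} reads $D^+T(x) = \overline{\mbox{co}}\,Y(x)$, where $Y(x) = \{\, y_\nu := -g(\nu)\nu \,:\, \nu \in M(x)\,\}$. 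The first key point is that each generator already lies on the zero set of the Hamiltonian. Indeed, since $g(\nu)\geq 0$ and $|\nu|=1$ we have $|y_\nu| = g(\nu)$, and when $y_\nu \neq 0$ the vector $y_\nu$ is a nonzero multiple of $\nu$, so the hyperplane $\{y_\nu\cdot(\xi-x)=0\}$ coincides with $\{\nu\cdot(\xi-x)=0\}$ and hence $\int_{y_\nu\cdot(\xi-x)=0}\rho\,d\mathcal{H}^{n-1} = g(\nu)$. Thus $|y_\nu| - \int_{y_\nu\cdot(\xi-x)=0}\rho\,d\mathcal{H}^{n-1} = 0$ for every nonzero $y_\nu \in Y(x)$.

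It remains to transfer this identity to an arbitrary nonzero extreme point $w$ of $D^+T(x) = \overline{\mbox{co}}\,Y(x)$. First I would record that $Y(x)$ is compact: the set $M(x)$ is a closed subset of the compact sphere $\mathbb{S}^{n-1}$ --- closed because $\nu\mapsto Z(x,\nu)$ is lower semicontinuous (established in the proof of Theorem \ref{thm:semi-diff-equalities}), so that a limit of minimizers is again a minimizer --- and the map $\nu\mapsto y_\nu = D_x Z(x,\nu)$ is continuous in $\nu$ (also established there); hence $Y(x)$, the continuous image of a compact set, is compact. In particular $\overline{\mbox{co}}\,Y(x)$ is a compact convex set whose extreme points, by Milman's partial converse to the Krein--Milman theorem, all lie in $Y(x)$ itself. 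Consequently $w = y_\nu$ for some $\nu\in M(x)$, and since $w\neq 0$ we have $g(\nu)\neq 0$; the computation of the previous paragraph then gives exactly $|w| - \int_{w\cdot(\xi-x)=0}\rho\,d\mathcal{H}^{n-1}(\xi) = 0$, as claimed.

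I expect the only genuinely delicate step to be the appeal to Milman's theorem, whose hypotheses force me to verify the compactness of the generating set $Y(x)$ so that no new extreme points are manufactured in passing to the closed convex hull. Should one wish to avoid proving that $Y(x)$ is closed, a variant is available: Milman's theorem alone places $w$ in $\overline{Y(x)}$, and since $w\neq 0$ one may conclude by continuity of the map $y\mapsto |y| - \int_{y\cdot(\xi-x)=0}\rho\,d\mathcal{H}^{n-1}$ on $\mathbb{R}^n\setminus\{0\}$, approximating $w$ by generators $y_{\nu_k}$ each satisfying the identity. Everything else is the elementary bookkeeping of the first paragraph, namely the identity $|y_\nu| = g(\nu)$ together with the coincidence of the two hyperplanes.
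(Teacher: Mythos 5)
Your proof is correct, but it follows a genuinely different route from the paper's. The paper argues by approximation through points of differentiability: by Lemma 1.8 of \cite{bardi2008optimal} the set where $T$ is differentiable is dense, the PDE holds classically at such points by the derivation of Section \ref{sec:Derivation}, and an extreme point $w$ of $D^+T(x)$ is realized as a limit of gradients $DT(x_n)$ along a sequence $x_n \to x$; the identity then passes to the limit because the integral term is continuous in $p$ away from $p=0$ (this is where $w \neq 0$ enters). Your argument is instead static: you read off $D^+T(x) = \overline{\mathrm{co}}\,Y(x)$ from Theorem \ref{thm:semi-diff-equalities}, observe that every nonzero generator $y_\nu = -g(\nu)\nu$ satisfies the identity tautologically (its magnitude equals the mass of the slice perpendicular to it, and it determines the same hyperplane as $\nu$), and then pin the extreme point $w$ to a generator via compactness of $Y(x)$ and Milman's theorem, with $w \neq 0$ entering only to exclude the degenerate generator $0$. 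Each approach buys something. The paper's is shorter, but its assertion that extreme points of the superdifferential are reachable gradients is justified only by ``the definition of $D^+T$,'' which is really a nontrivial fact about marginal (semiconcave-type) functions; your version replaces that step with elementary finite-dimensional convex geometry --- in $\mathbb{R}^n$, Milman's theorem reduces to the observation that an extreme point of the convex hull of a compact set must lie in that set --- at the mild cost of verifying compactness of $Y(x)$, which, as you correctly note, follows from the lower semicontinuity of $Z(x,\cdot)$ and the continuity of $\nu \mapsto D_x Z(x,\nu)$ already established in the proof of Theorem \ref{thm:semi-diff-equalities}. Your fallback variant, placing $w$ in $\overline{Y(x)}$ and concluding by continuity of $p \mapsto |p| - \int_{p\cdot(\xi-x)=0}\rho\,d\mathcal{H}^{n-1}$ on $\mathbb{R}^n\setminus\{0\}$, is also sound and is, in spirit, the closest to the paper's limiting argument.
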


\begin{proof}
  By Lemma 1.8 in \cite{bardi2008optimal}, the set of points where $T$ is differentiable is dense, and at any such point the PDE \eqref{eqn:TukeyEqn} is satisfied by the argument in Section \ref{sec:Derivation}. As $w$ is an extreme point of $D^+T(x)$, by the definition of $D^+T$ there must exist a sequence of points $x_n \to x$ and $DT(x_n) = w_n \to w$. Taking limits and using the continuity of the integral term, which holds as $w$ is non-zero, we then have that the desired equality holds.
\end{proof}

\begin{lemma}
  In $\R^2$, any hyperplane (with normal $\nu$) minimizing the halfspace depth through a point $x$ is \emph{balanced}, in the sense that
  \[
    \int_0^\infty \rho(x +  r\nu^\perp) r \,dr = \int_{-\infty}^0 \rho( x + r\nu^\perp) r \,dr,
  \]
  where $\nu^\perp$ is a normal vector orthogonal to $\nu$.  In particular, for uniform densities in dimension two we have that the probability mass through the line bounding a minimal halfspace is the same on either side of $x$; or in other words the line has the same length on either side of $x$.
  \label{lem:balanced-lines}
\end{lemma}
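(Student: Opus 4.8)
The plan is to read off the \emph{balanced} condition as the first-order optimality condition for the angular minimization $T(x) = \inf_{\nu\in\mathbb{S}^1} Z(x,\nu)$ from \eqref{eqn:Tukey-def-min}, written in polar coordinates centered at $x$. Writing $\nu(\phi) = (\cos\phi,\sin\phi)$ and $y = x + s\,e_\theta$ with $e_\theta = (\cos\theta,\sin\theta)$ and $s \geq 0$, so that $dy = s\,ds\,d\theta$, the halfspace $\{(y-x)\cdot\nu(\phi)\geq 0\}$ becomes $\{\cos(\theta-\phi)\geq 0\}$, i.e. $\{\theta \in [\phi-\tfrac{\pi}{2},\phi+\tfrac{\pi}{2}]\}$. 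Hence
\[
Z(x,\nu(\phi)) = \int_{\phi-\pi/2}^{\phi+\pi/2} G(\theta)\,d\theta, \qquad G(\theta) := \int_0^\infty \rho(x+s\,e_\theta)\,s\,ds,
\]
and, since $\rho$ is continuous with bounded support, $G$ is continuous and bounded, so $\phi\mapsto Z(x,\nu(\phi))$ is $C^1$ with derivative $G(\phi+\tfrac{\pi}{2}) - G(\phi-\tfrac{\pi}{2})$. This is exactly the computation already carried out in Example \ref{ex:not-visc-sol}.

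Next I would invoke optimality: if $\nu(\phi^*)$ is a minimizing normal, then $\phi^*$ is a critical point of the $C^1$ map $\phi\mapsto Z(x,\nu(\phi))$ on the circle (which has no boundary), so $G(\phi^*+\tfrac{\pi}{2}) = G(\phi^*-\tfrac{\pi}{2})$. Since $e_{\phi^*+\pi/2} = \nu^\perp$ and $e_{\phi^*-\pi/2} = -\nu^\perp$, substituting $r = s$ on the first ray and $r = -s$ on the second turns this equality into
\[
\int_0^\infty \rho(x+r\nu^\perp)\,r\,dr = \int_0^\infty \rho(x-r\nu^\perp)\,r\,dr,
\]
which is the asserted moment balance (the right-hand integral being the first moment over the opposite ray $r<0$, up to the orientation convention for $\nu^\perp$). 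For the uniform case $\rho\equiv\rho_0$ on its support we have $G(\theta) = \tfrac12\rho_0\,L(\theta)^2$, where $L(\theta)$ is the length of the chord from $x$ into the support in direction $e_\theta$; the balance then gives $L(\phi^*+\tfrac{\pi}{2}) = L(\phi^*-\tfrac{\pi}{2})$, from which both the equal-length and equal-mass claims follow.

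The main obstacle is the rigorous justification that $\phi\mapsto Z(x,\nu(\phi))$ is $C^1$ with the stated derivative, which reduces to the continuity of $G$. I would establish this from the uniform continuity of $\rho$ and the boundedness of its support via dominated convergence, paralleling the estimates already used for $D_x Z$ in the proof of Theorem \ref{thm:semi-diff-equalities}. The only other point to watch is that the boundary of a minimal halfspace indeed passes through $x$ — otherwise one could translate it toward $x$ and strictly decrease the enclosed mass — which is what justifies using the representation \eqref{eqn:Tukey-def-min} and placing the bounding line at $\{x + r\nu^\perp\}$.
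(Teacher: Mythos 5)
Your proposal is correct and follows essentially the same route as the paper's proof: both express the halfspace mass in polar coordinates centered at $x$ as $\int_{\phi-\pi/2}^{\phi+\pi/2} G(\theta)\,d\theta$, differentiate in the normal's angle $\phi$, and read off the balance condition $G(\phi^*+\tfrac{\pi}{2})=G(\phi^*-\tfrac{\pi}{2})$ from first-order optimality, with the uniform case following since $G$ is then proportional to the squared chord length. Your version merely supplies extra rigor (continuity of $G$, the minimizing line passing through $x$) that the paper leaves implicit.
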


\begin{proof}
  Given normal vector $\nu$, we may express the probability density assigned to a halfspace in terms of polar coordinates by
  \[
    H(x,\nu) = \int_{(x-y)\cdot \nu \geq 0} \rho(y)\,dy = \int_{\theta-\pi/2}^{\theta + \pi/2} \int_0^\infty \rho(r,\theta) r \,dr \,d\theta,
  \]
  where here $\theta$ represents the angle corresponding to $\nu$. In turn, taking a derivative with respect to $\theta$ and setting it to zero gives the desired equality. Applying that equality in two dimensions to uniform densities and integrating immediately gives the second result:
\end{proof}

\begin{lemma}\label{lem:boundary-convex-2D}
  In two dimensions, any point in the boundary of a closed, bounded convex set is either an extreme point or can be represented as a convex combination of two extreme points.
\end{lemma}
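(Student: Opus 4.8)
The plan is to use a supporting line at the boundary point and reduce the question to the elementary one-dimensional geometry of the face it cuts out. Let $C$ be the closed, bounded convex set and fix a boundary point $x$. By the supporting hyperplane theorem there is a line $\ell$ through $x$ and a nonzero linear functional $f$ with $f \equiv c := f(x)$ on $\ell$ and $f(y) \le c$ for every $y \in C$. The face $F := C \cap \ell$ is then a closed, bounded, convex subset of the line $\ell$, so it is either the single point $\{x\}$ or a nondegenerate closed segment $[a,b]$. This is the natural candidate: in the first case $x$ should be extreme, and in the second case $x$ should be a convex combination of the two endpoints.

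The crux of the argument is that the endpoints of $F$ are extreme not merely within $F$ but in all of $C$, and this is exactly the equality case of the supporting inequality. Concretely, suppose a point $w \in F$ satisfies $w = \lambda p + (1-\lambda) q$ with $p,q \in C$ and $\lambda \in (0,1)$. Then $c = f(w) = \lambda f(p) + (1-\lambda) f(q) \le c$ forces $f(p) = f(q) = c$, whence $p,q \in \ell \cap C = F$. Thus any convex decomposition of a point of $F$ automatically stays inside $F$, so every extreme point of the one-dimensional set $F$ is an extreme point of $C$.

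With this in hand the two cases are immediate. If $F = \{x\}$, the displayed argument with $w = x$ gives $p = q = x$, so $x$ is an extreme point of $C$. If $F = [a,b]$ is nondegenerate, then $a$ and $b$ are the (two) extreme points of the segment $F$, hence extreme points of $C$ by the previous paragraph, and $x \in [a,b]$ is visibly a convex combination of $a$ and $b$. I expect the only subtlety to be the degenerate situation in which $C$ has empty interior, i.e. $C$ is itself a point or a segment; there the supporting line can be taken to be the line containing $C$, so $F = C$ and the same dichotomy applies verbatim, requiring no separate treatment. I would note that this direct approach both avoids appealing to Minkowski's theorem and sharpens the generic Carath\'eodory bound of three points to the two points claimed for boundary points in two dimensions.
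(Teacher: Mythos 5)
Your proof is correct and takes essentially the same route as the paper's: a supporting line at the boundary point, whose intersection with the set is either the point itself or a closed segment whose endpoints are extreme points of the whole set. In fact your write-up is more complete than the paper's, which asserts without justification that the endpoints of the face are extreme in $C$; your equality-case argument (any convex decomposition of a point of the face $F = C \cap \ell$ stays inside $F$) supplies exactly the missing step.
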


\begin{proof}
  In two dimensions, a supporting hyperplane to a convex set is just a line, and so any point in the boundary of that convex set could only possibly be a linear combination of two points along that line. The intersection of that line with the convex set will consitute a closed line segment, whose endpoints will be extreme points, which establishes the lemma.
\end{proof}

We can now prove the theorem:

\begin{proof}[Proof of Theorem \ref{thm:convex-sub-sol}]

  To prove the theorem, we simply need to show that any $p \in D^+T(x)$ satisfies
  \begin{equation}\label{eqn:sub-sol-proof}
    |p| - \int_{p \cdot (\xi-x)} \rho(\xi) \,\,d\mathcal{H}^{n-1}(\xi) \leq 0.
  \end{equation}
  By Theorem \ref{thm:semi-diff-equalities} we have that this inequality (actually with equality) is satisfied at any extreme point of $D^+T$, and that those extreme points correspond to the directions minimizing the halfspace measure. Thus the only case we need to verify is the case where $D^+T$ contains multiple points, and we only need to verify it for points that are not extreme points.

  We claim that for any two distinct extreme points $p_1,p_2$ of $D^+T(x)$, we have that $tp_1 + (1-t)p_2$ satisfies \eqref{eqn:sub-sol-proof} at the point $x$, for any $0 \leq t \leq 1$. By taking an appropriate affine transformation, we may assume that $p_1 = 2 e_1, p_2 = 2 e_2$ and that $x = 0$. After applying this transformation by using Lemma \ref{lem:balanced-lines} and the fact that \eqref{eqn:sub-sol-proof} holds with equality at $p_1,p_2$, we know that $\partial \Omega \cap \{x=0\} = \{0\}\times [-1,1]$ and $\partial \Omega \cap \{y=0\} = [-1,1]\times\{0\}$. The desired estimate can be described geometrically as follows: given $0 \leq t \leq 1$, is $|tp_1 + (1-t)p_2|$ smaller than the length of the intersection of the line through the origin in the direction $tp_1 + (1-t)p_2$ with a convex domain containing the lines $\{0\}\times [-1,1]$ and $[-1,1]\times\{0\}$; see Figure \ref{fig:convex-vs-triangle}. This, however, is always true: in particular the smallest such convex domain must contain the triangles $( (0,0), (1,0), (0,1))$ and $( (0,0),(-1,0),(0,-1))$, and the length of the intersection of the line with those triangles is exactly equal to $|tp_1 + (1-t)p_2|$, see Figure \ref{fig:quad-vs-triangle}.

  \begin{figure}[h]
    \centering
\includegraphics[width=.25\columnwidth]{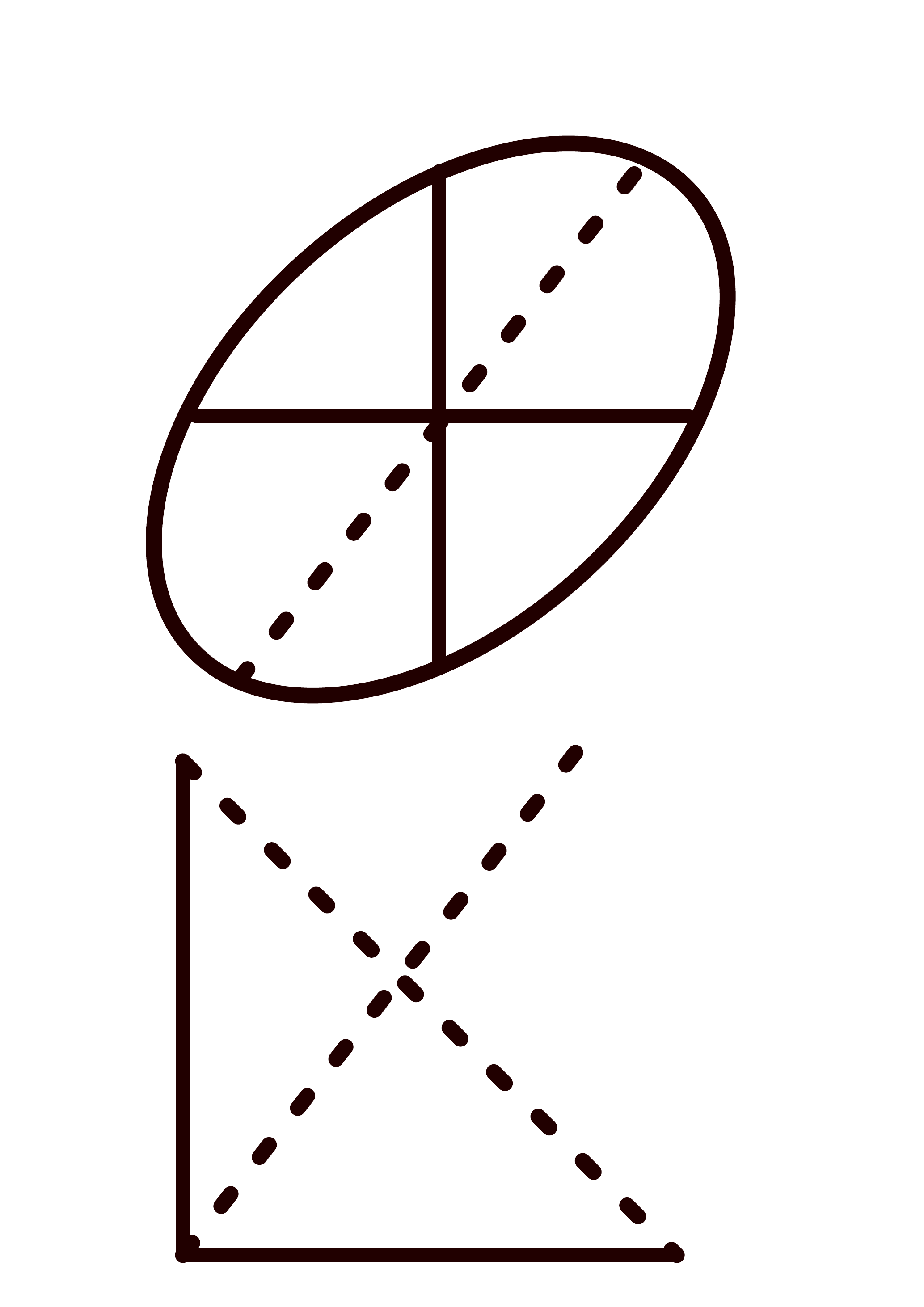}
\caption{Comparing the lengths of the lines inside the convex domain and the triangle, both pointing in the same direction.}
    \label{fig:convex-vs-triangle}
  \end{figure}

  \begin{figure}[h]
    \centering
\includegraphics[width=.25\columnwidth]{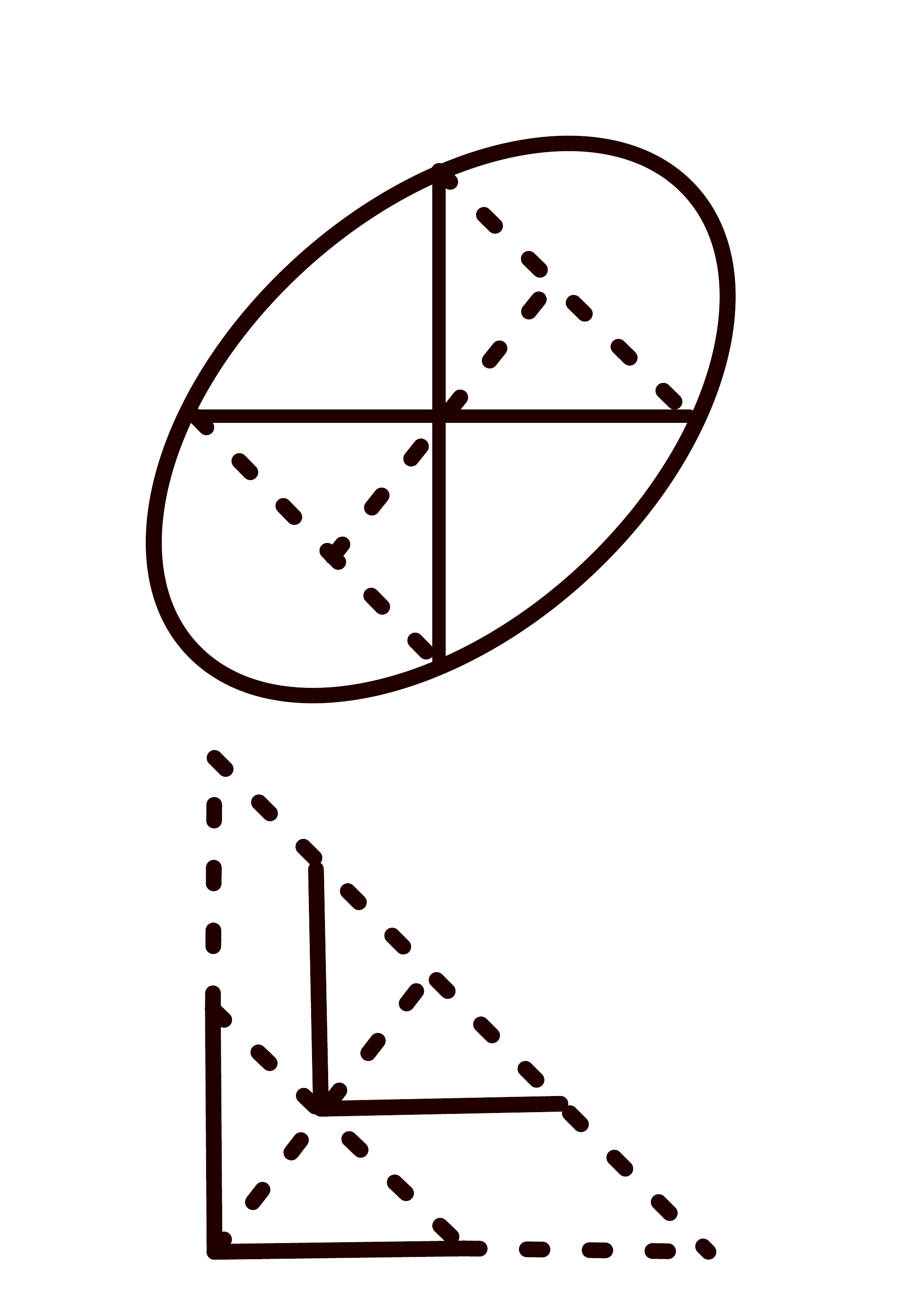}
\caption{Comparing the lengths of the lines inside the triangles inscribed in the convex domain and the triangle, both pointing in the same direction. The second illustration shows the lower triangle reflected in the other direction, and given the equal aspect ratios of the triangles, the equality is immediate.}
    \label{fig:quad-vs-triangle}
  \end{figure}

  Now, we claim that, in two dimensions, the previous claim is sufficient to establish \eqref{eqn:sub-sol-proof} for all $p \in D^+T(x)$. Suppose that $\tilde p \in D^+T(x)$ is of maximal magnitude for a specific direction, meaning $\sup\{s : s \tilde p \in D^+T(x) \} = 1$. Then either $\tilde p$ is an extreme point of $D^+T(x)$ or it can be represented as a convex combination of two other extreme points by Lemma \ref{lem:boundary-convex-2D}. The previous claim then implies that $\tilde p$ must satisfy \eqref{eqn:sub-sol-proof}. We then notice that the set where \eqref{eqn:sub-sol-proof} is satisfied is star-shaped with respect to the origin, meaning that if $\tilde p$ satisfies \eqref{eqn:sub-sol-proof} then $s\tilde p$ for any $s \in [-1,1]$: this is a simple consequence of the fact that $|\cdot|$ is one-homogeneous while the integral term is zero homogeneous. This then implies that \eqref{eqn:sub-sol-proof} holds for all $p \in D^+T(x)$, which concludes the proof.
\end{proof}

\begin{remark}
  A natural question is how to relax the assumptions of the previous theorem. In particular, the restriction to two dimensions and uniform measures seems overly strict. Example \ref{ex:not-visc-sol} suggests that some type of convexity is a necessary assumption: we suspect that the theorem should continue to hold for any log-concave measure in any dimension. However, the transparent geometric proof that we present here is not applicable in higher dimension, and other tools would need to be developed. For brevity and accessibility we restrict our attention to the present setting and leave extension to future work.
\end{remark}

\begin{remark}
  We also notice that Theorem \ref{thm:convex-sub-sol} can be restated in terms of convex floating bodies \cite{nagy2019halfspace}: namely that convex floating bodies in two dimensions are characterized by the viscosity solution to \eqref{eqn:TukeyEqn}. We remark that there is a regularity theory for convex floating bodies (see Proposition 13 in \cite{nagy2019halfspace}): namely if a strictly convex, centrally symmetric body has $C^1$ boundary then the floating body (which we identify as the level sets of the Tukey depth) has $C^2$ boundary. In that case the viscosity theory is unnecessary. However, the theory we give here applies for convex bodies which are not regular, and for which the convex floating body does not have smooth boundary.
\end{remark}

\section{Numerical Illustrations}\label{sec:illustrations}
In this section we demonstrate some numerical solutions to the HJB equations we derived in Section \ref{sec:Derivation}. The purpose of this section is to provide illustration and not to provide a full account or analysis of such numerical methods. Indeed, more detailed analysis and efficient implementation of the numerics for these equations is the topic of current work.

Figures \ref{fig:square} , \ref{fig:triangle}, \ref{fig:gaussian}, and \ref{fig:cauchy} illustrate a numerical approximation of the Tukey depth using the differential equation \eqref{eqn:TukeyEqn}, for four different input distributions. The first two cases deal with uniform distributions on the square and triangle, both of which are covered in Theorem \ref{thm:convex-sub-sol}. The third case, namely a Gaussian in two dimensions, does not fall within that theory, but clearly is in a regime in which the Tukey depth is a viscosity solution, as the solution is smooth except at its maximum. The fourth case, namely a bivariate Cauchy distribution, does not have the same degree of convexity as the other examples do, but the numerical approximations of the viscosity solution still match the Tukey depth. We also highlight the fact that qualitatively, the solutions match the true solutions extremely well, resolving the singularities in matching ways.

Each of these solutions was generated by using a grid-based representation of the PDE solution. The approximate PDE solution was found by using a fast marching method \cite{sethian1999level,sethian1999fast} which incorporates an upwind representation of derivatives that is consistent with viscosity solutions. Details about the grid size, and the method for approximating the integral on the right-hand side of the differential equation, are given in the captions for the plots.

We emphasize that these results are primarily illustrative and preliminary. Extending these numerical methods to large data sets is a challenging task. Detailed analysis of these numerical methods is also non-trivial. However, these illustrations provide evidence that the differential equation approach can be quite effective, and thus warrants further study.

\begin{figure}
\centering
     \begin{subfigure}[b]{0.49\textwidth}
         \centering
         \includegraphics[width=\textwidth]{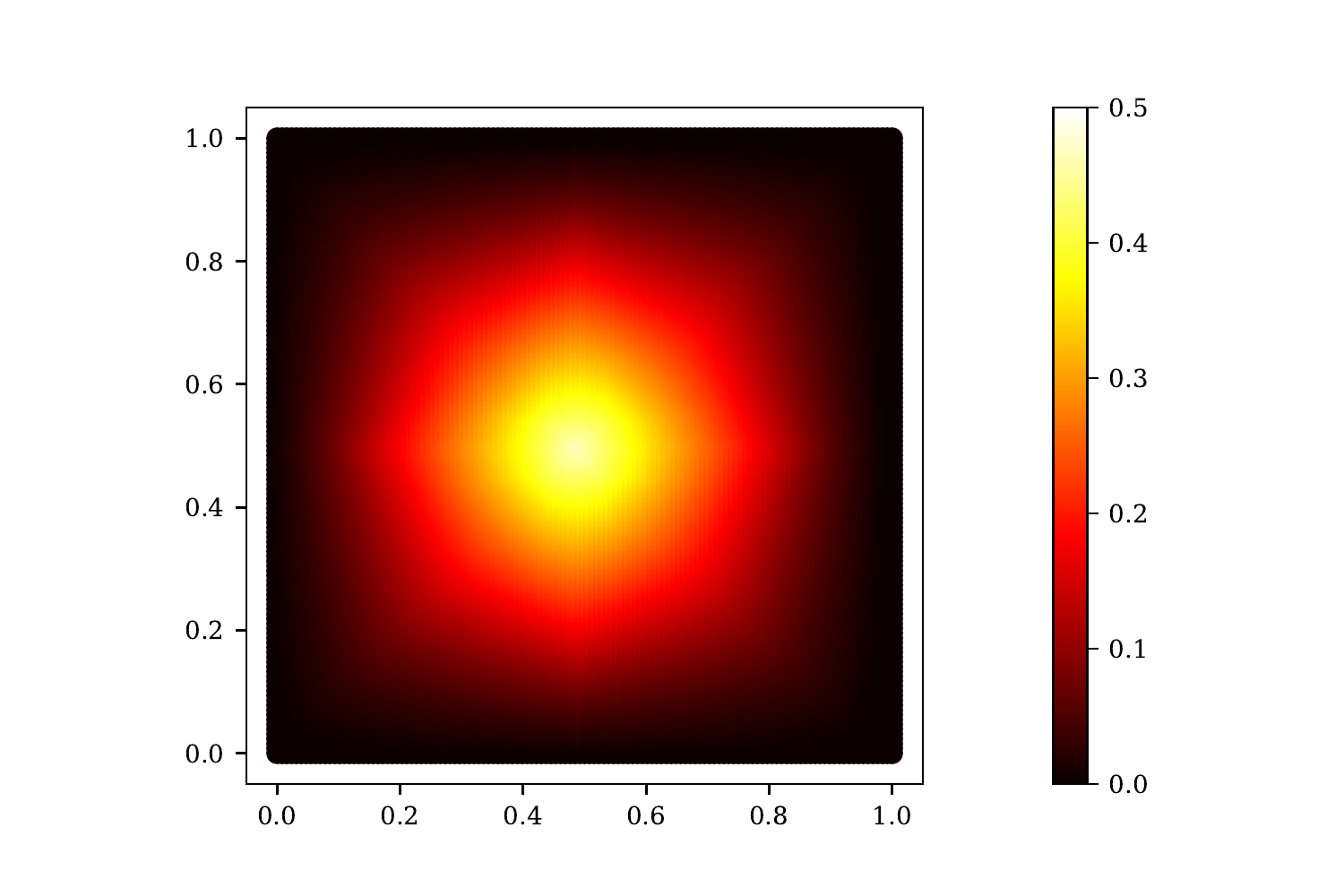}
         \caption{}
         \label{fig:square-vals}
     \end{subfigure}
     \hfill
     \begin{subfigure}[b]{0.49\textwidth}
         \centering
         \includegraphics[width=\textwidth]{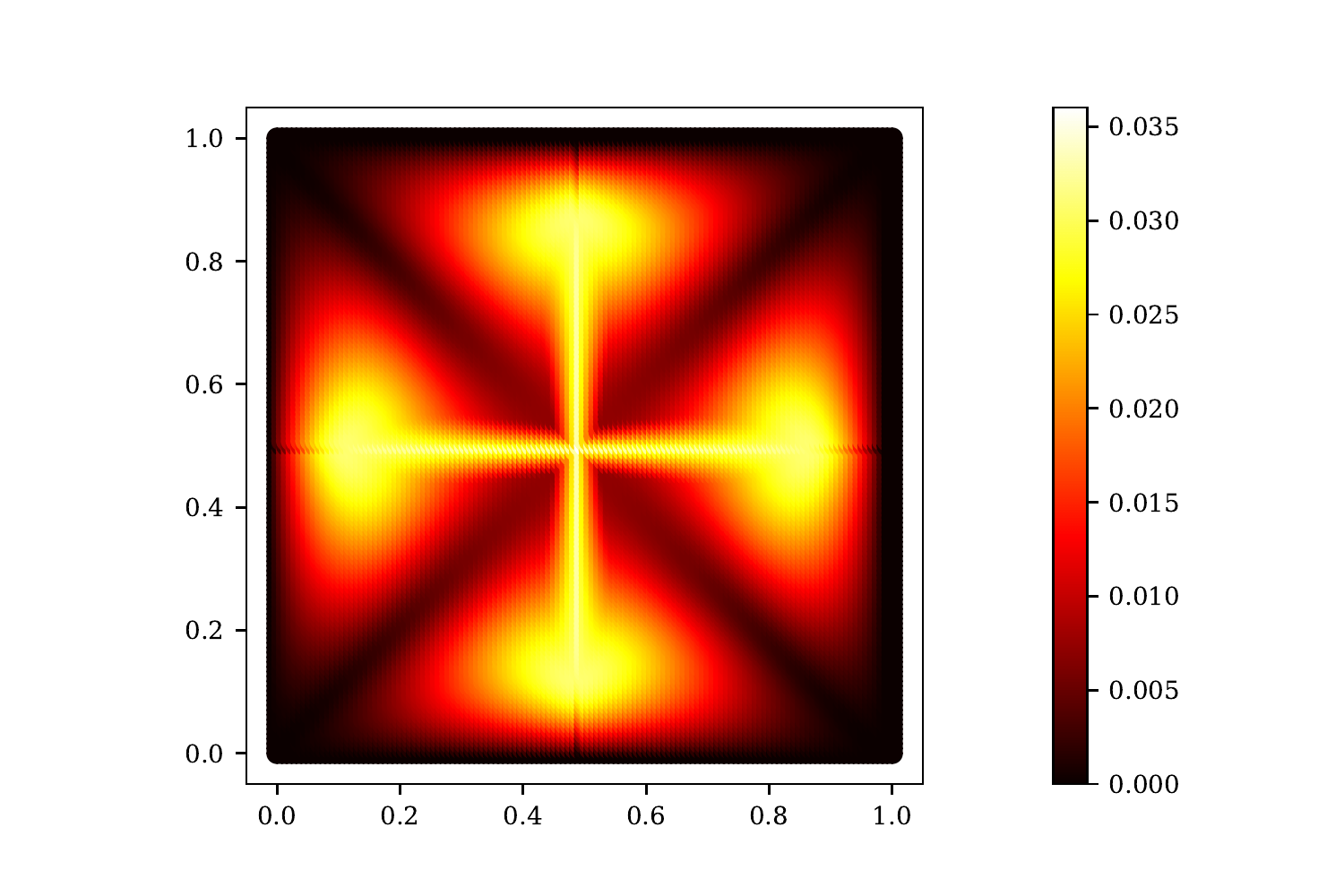}
         \caption{}
         \label{fig:square-errors}
     \end{subfigure}
     \\
     \begin{subfigure}[b]{0.49\textwidth}
         \centering
         \includegraphics[width=\textwidth]{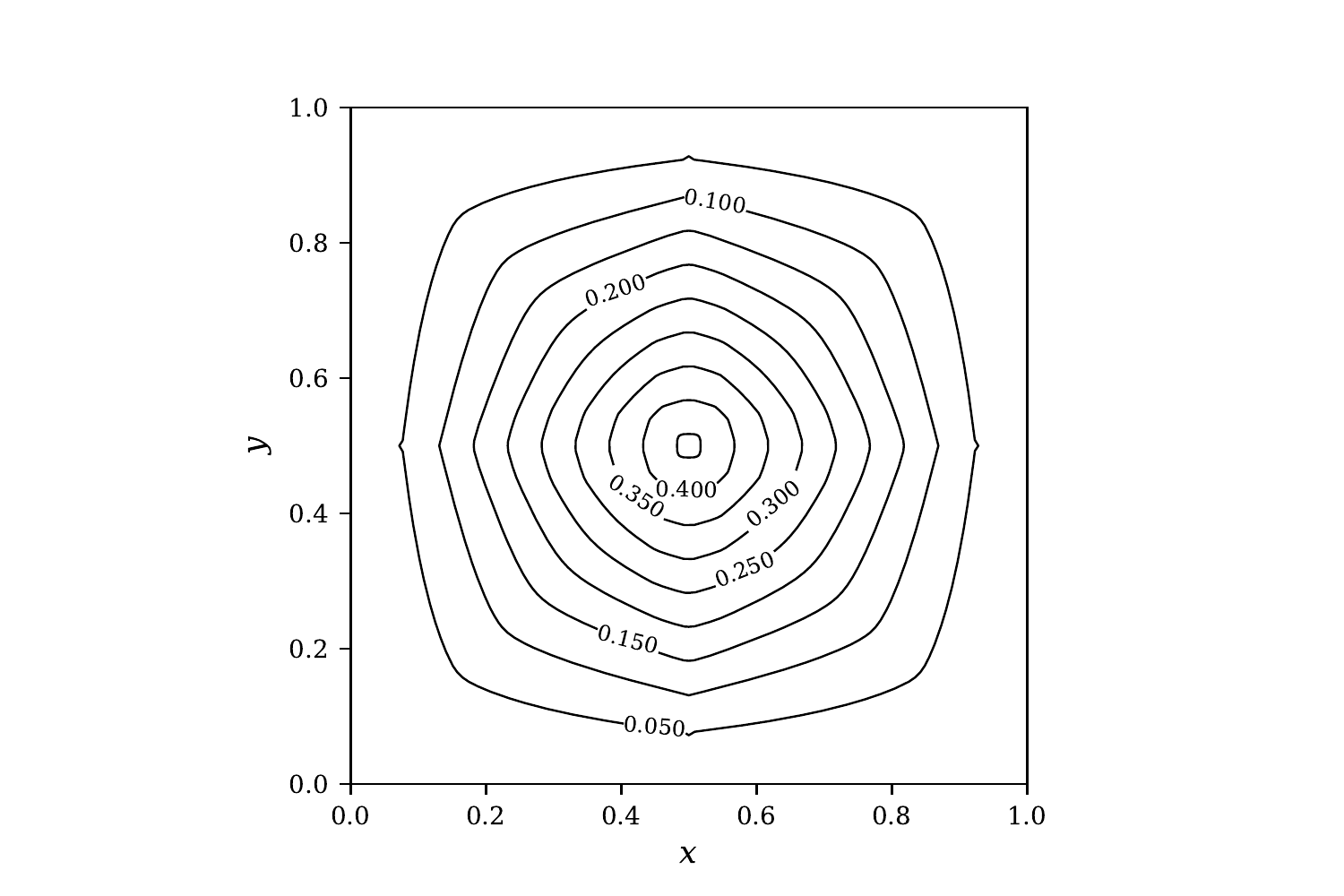}
         \caption{}
         \label{fig:square-calc-contours}
     \end{subfigure}
     \hfill
     \begin{subfigure}[b]{0.49\textwidth}
         \centering
         \includegraphics[width=\textwidth]{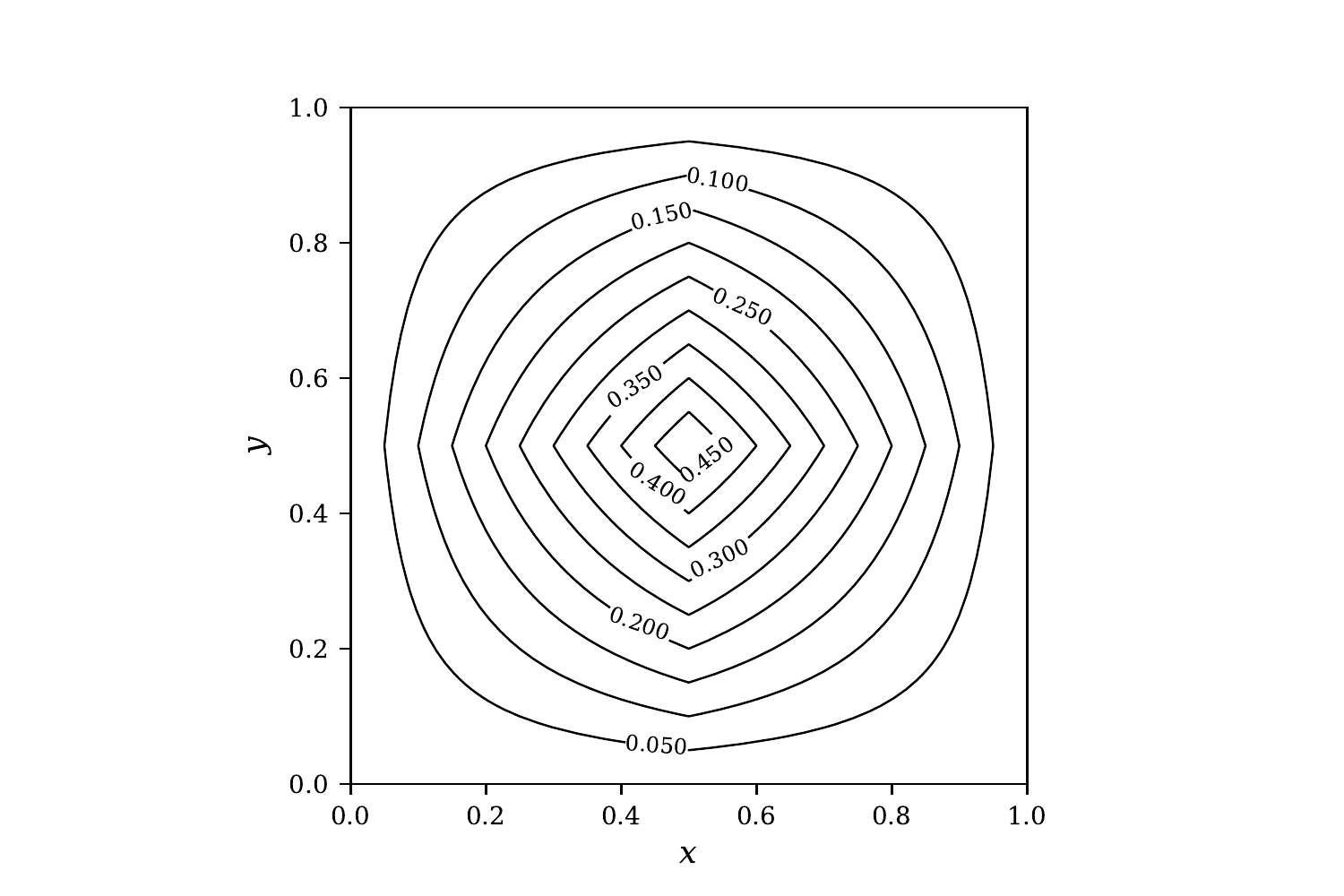}
         \caption{}
         \label{fig:square-actual-contours}
     \end{subfigure}
        \caption{Approximation of the Tukey depth using Equation \eqref{eqn:TukeyEqn}, for a uniform density on a square. Figure \ref{fig:square-vals} shows the computed values, Figure \ref{fig:square-errors} shows the difference with the exact solution, Figure \ref{fig:square-calc-contours} shows the level sets of the computed values and Figure \ref{fig:square-actual-contours} shows the level sets of the true contours. Here the (square) grid spacing is $1/128$, and the integral term in the right-hand side is computed analytically.}
        \label{fig:square}
\end{figure}

\begin{figure}
\centering
     \begin{subfigure}[b]{0.49\textwidth}
         \centering
         \includegraphics[width=\textwidth]{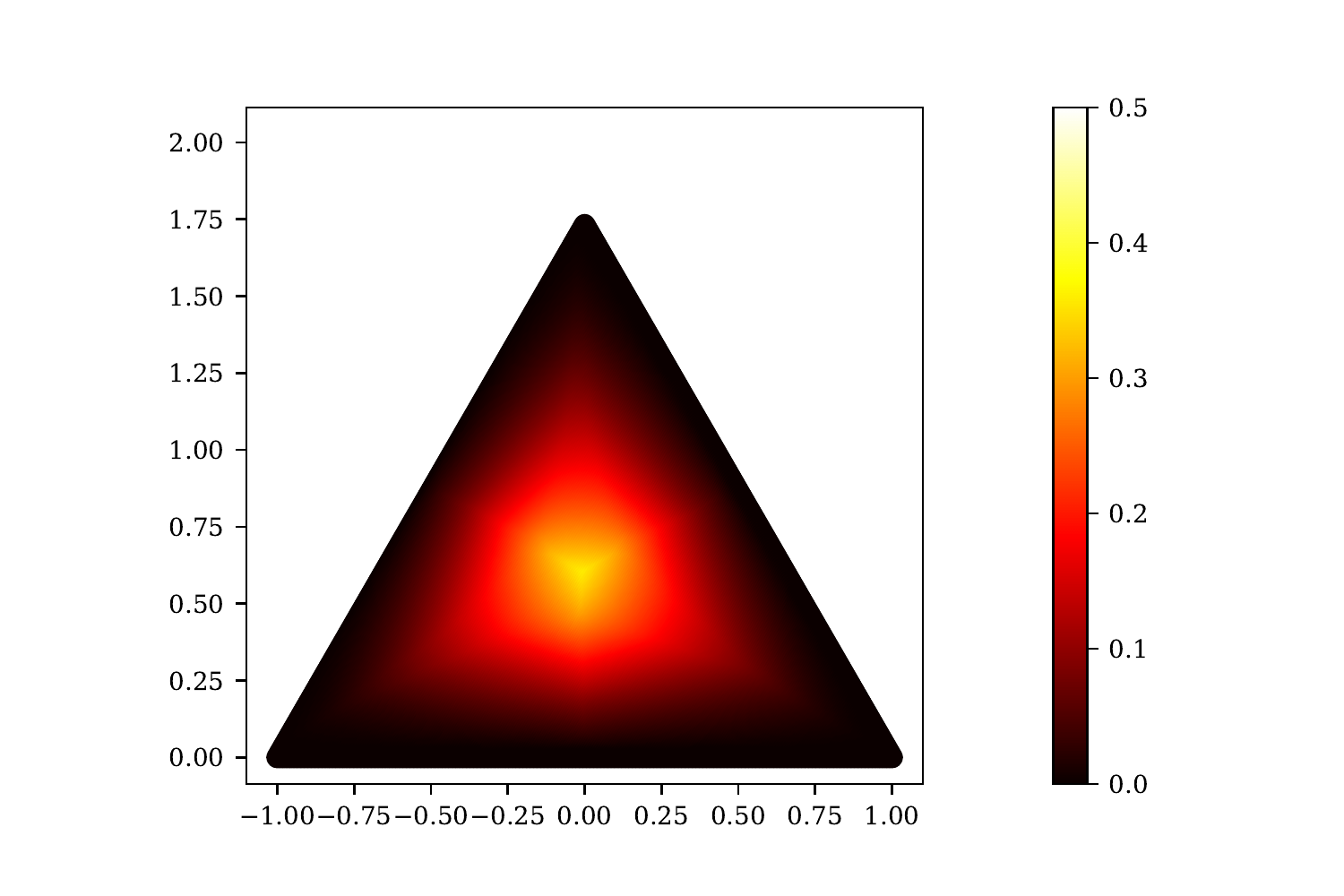}
         \caption{}
         \label{fig:triangle-vals}
     \end{subfigure}
     \hfill
     \begin{subfigure}[b]{0.49\textwidth}
         \centering
         \includegraphics[width=\textwidth]{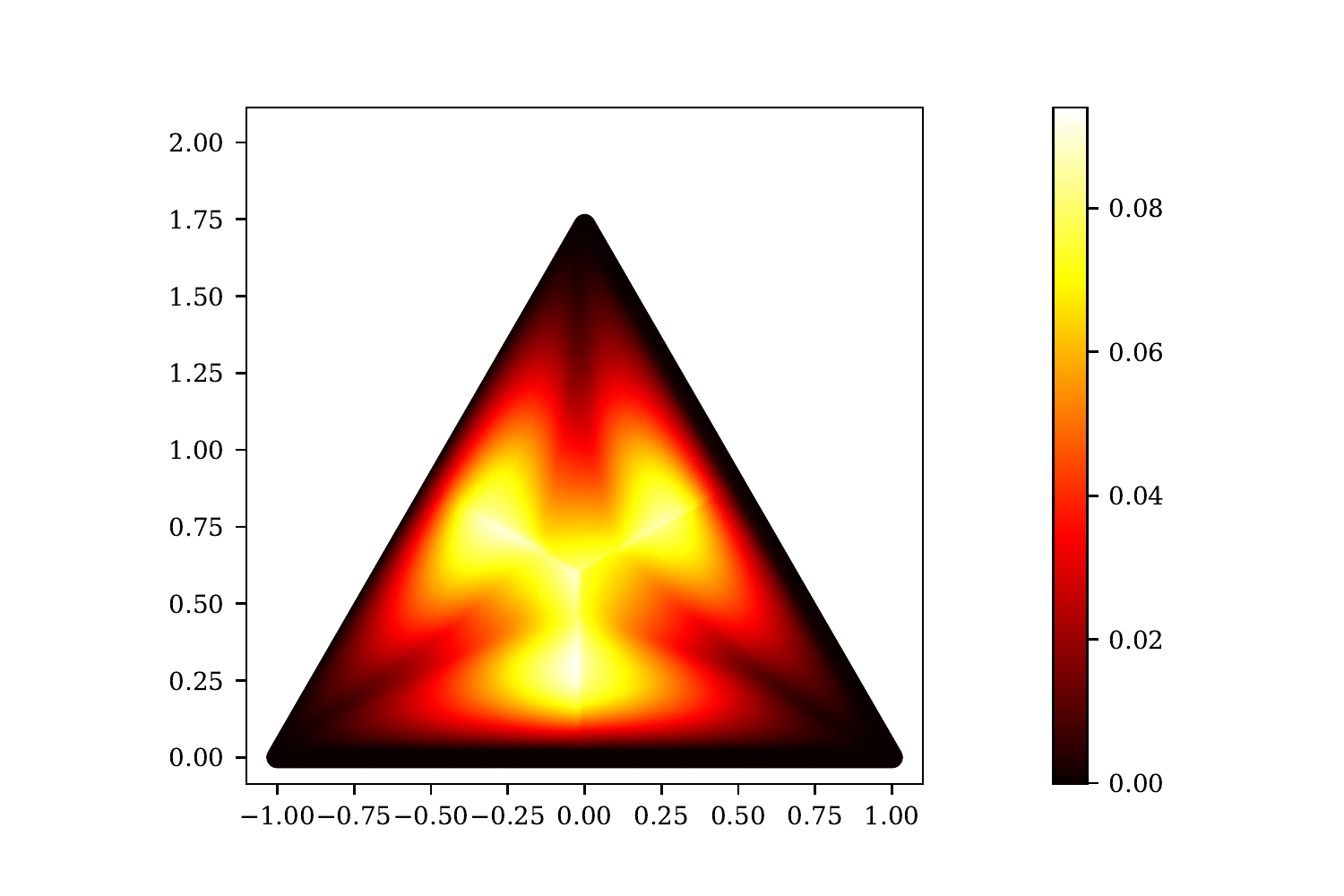}
         \caption{}
         \label{fig:triangle-errors}
     \end{subfigure}
     \\
     \begin{subfigure}[b]{0.49\textwidth}
         \centering
         \includegraphics[width=\textwidth]{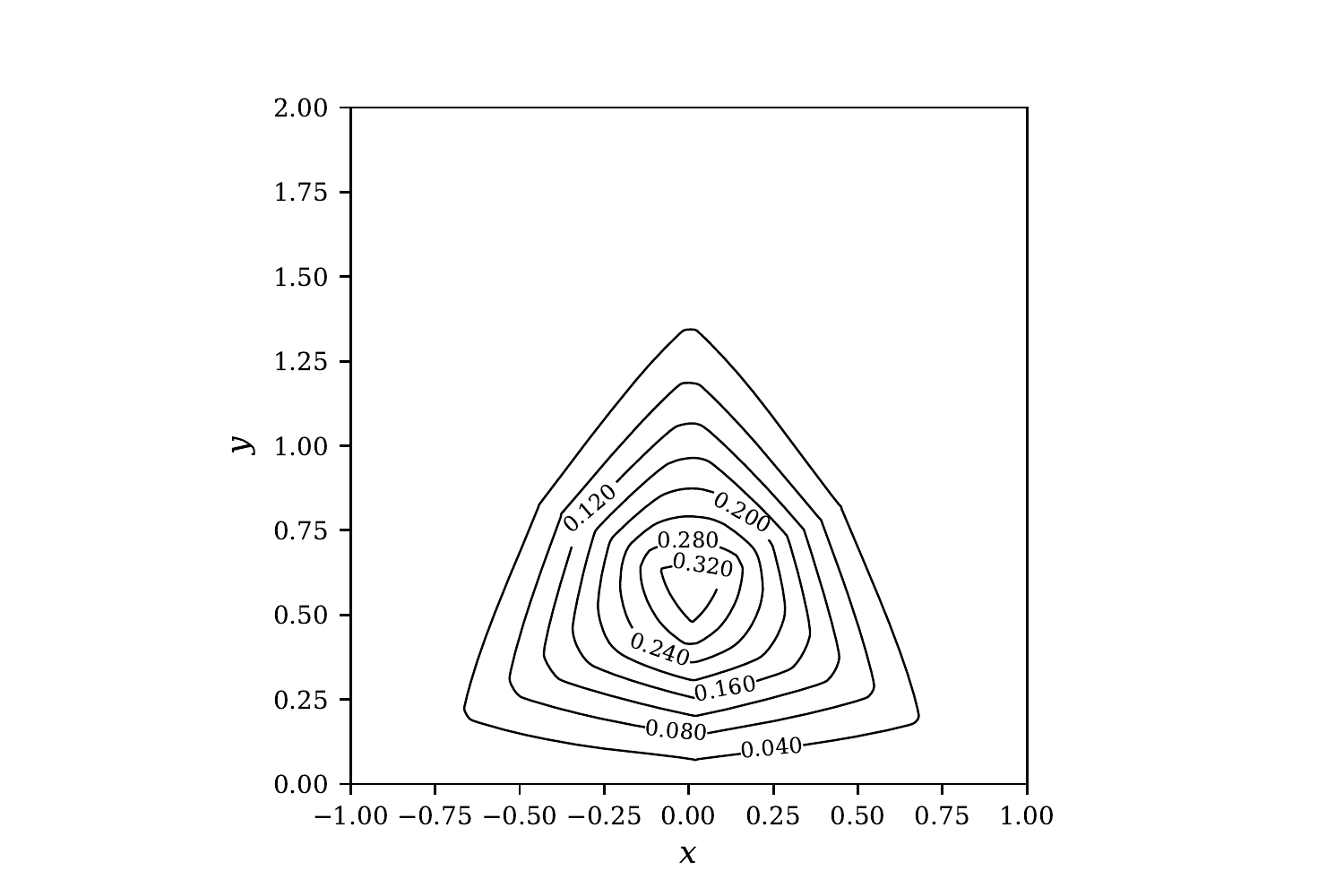}
         \caption{}
         \label{fig:triangle-calc-contours}
     \end{subfigure}
     \hfill
     \begin{subfigure}[b]{0.49\textwidth}
         \centering
         \includegraphics[width=\textwidth]{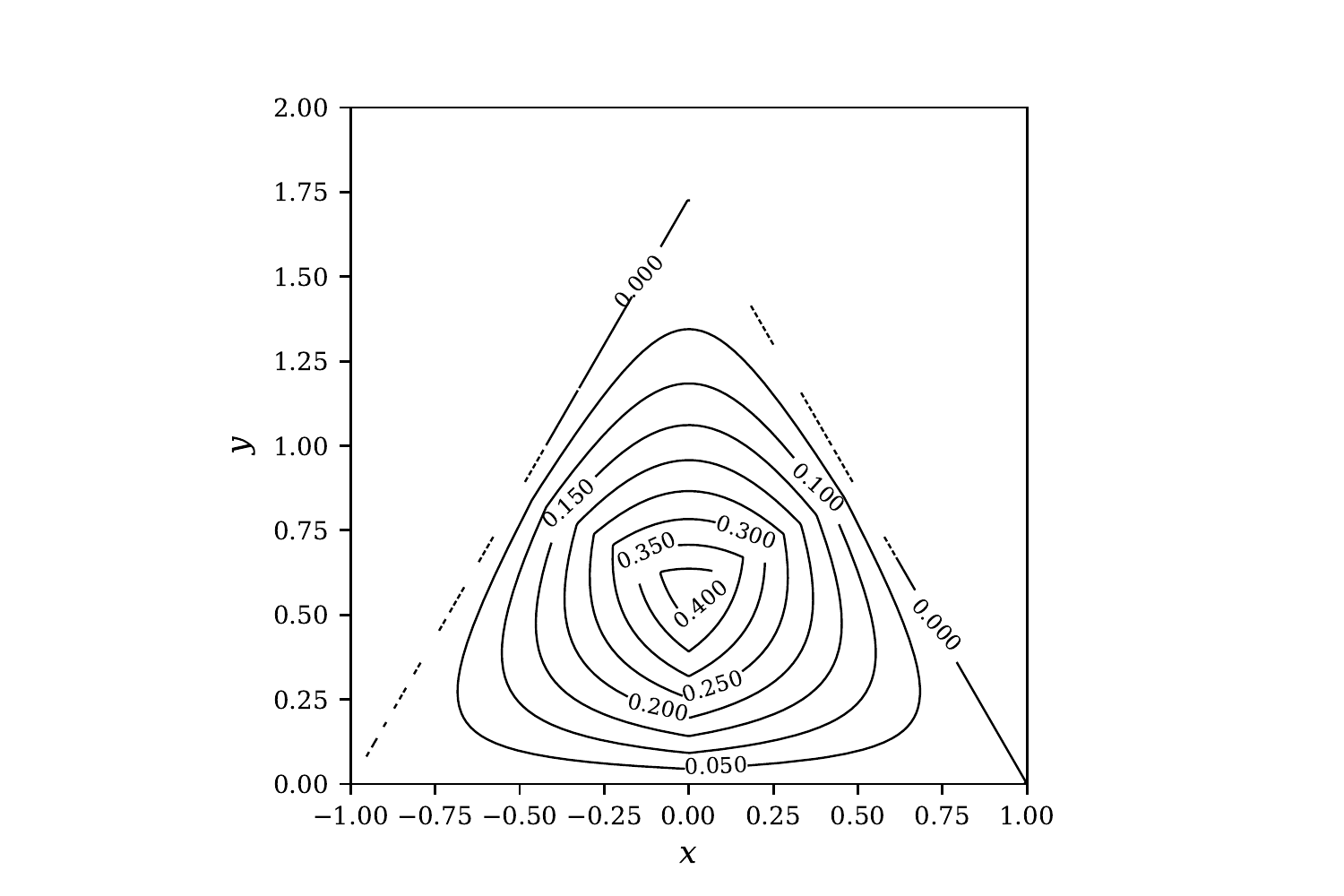}
         \caption{}
         \label{fig:triangle-actual-contours}
     \end{subfigure}
        \caption{Approximation of the Tukey depth using Equation \eqref{eqn:TukeyEqn}, for a uniform density on a triangle. Figure \ref{fig:triangle-vals} shows the computed values, Figure \ref{fig:triangle-errors} shows the difference with the exact solution, Figure \ref{fig:triangle-calc-contours} shows the level sets of the computed values and Figure \ref{fig:triangle-actual-contours} shows the level sets of the true contours. Here the (triangular) grid spacing is $1/128$, and the right-hand side is computed by approximating the integral using a random sample (size 12k) from the domain.}
        \label{fig:triangle}
\end{figure}

\begin{figure}
\centering
     \begin{subfigure}[b]{0.49\textwidth}
         \centering
         \includegraphics[width=\textwidth]{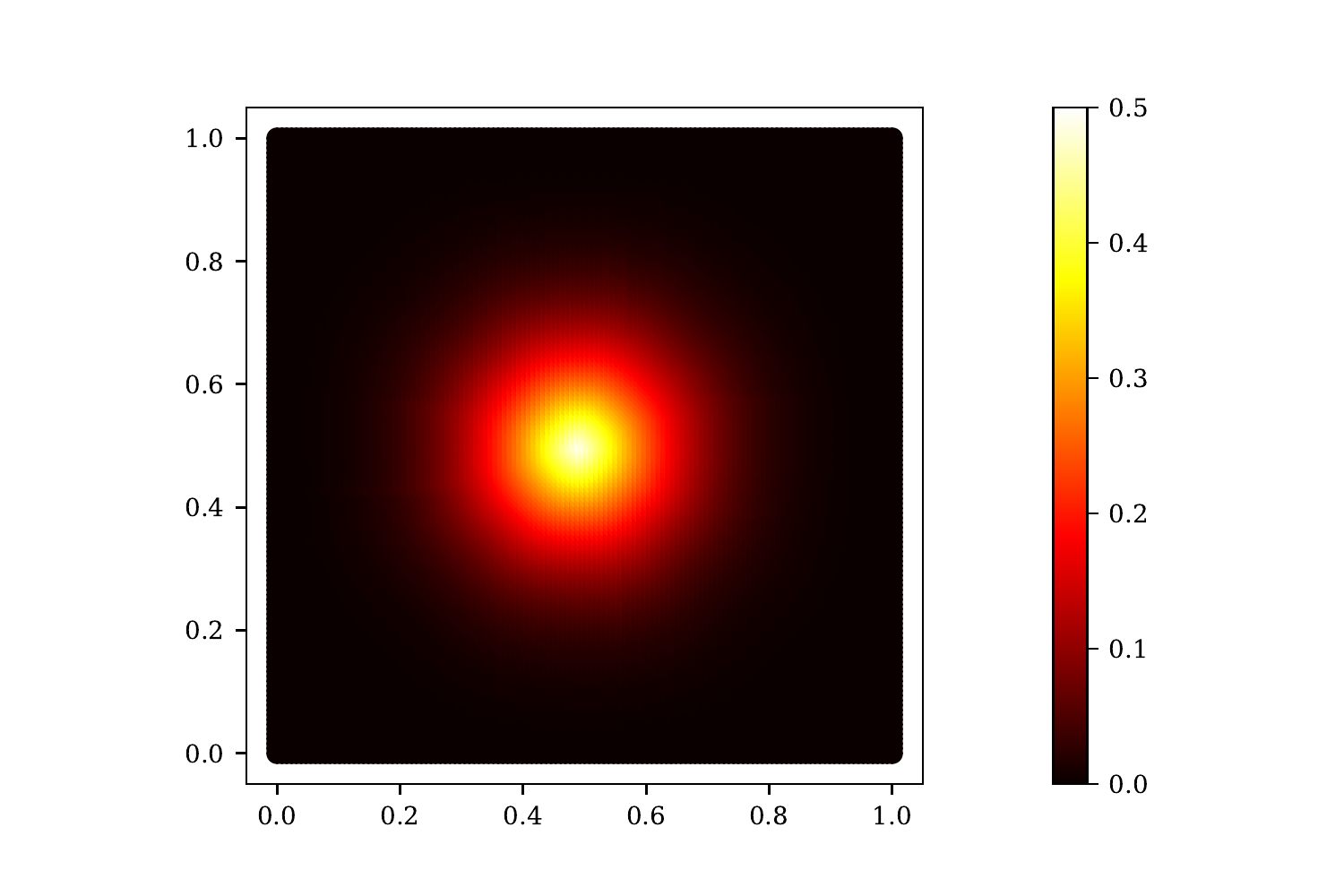}
         \caption{}
         \label{fig:gaussian-vals}
     \end{subfigure}
     \hfill
     \begin{subfigure}[b]{0.49\textwidth}
         \centering
         \includegraphics[width=\textwidth]{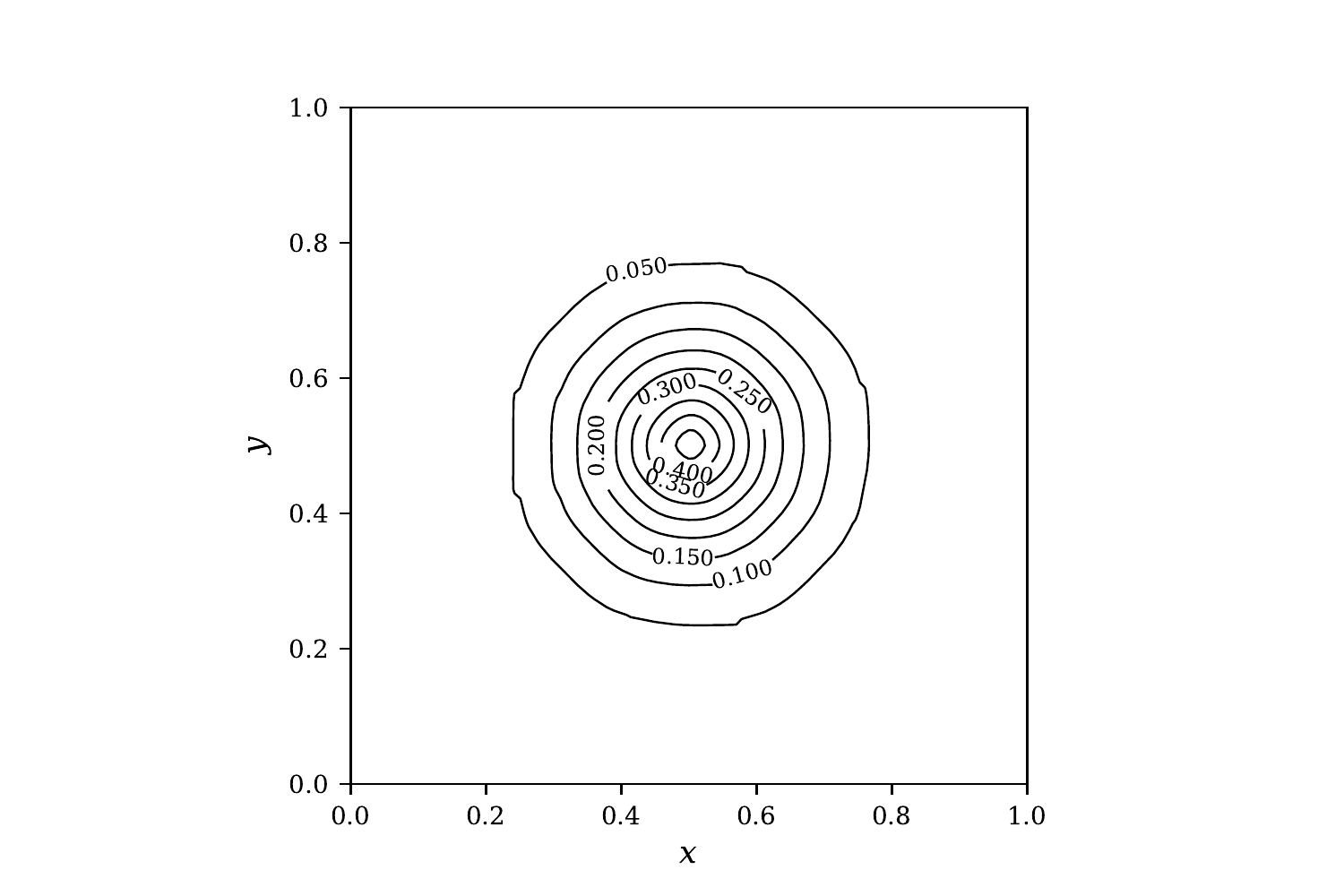}
         \caption{}
         \label{fig:gaussian-contours}
     \end{subfigure}
            \caption{Approximation of the Tukey depth using Equation \eqref{eqn:TukeyEqn}, for a normal distribution. Figure \ref{fig:gaussian-vals} shows the computed values, and Figure \ref{fig:gaussian-contours} shows the level sets of the computed values. Here the (square) grid spacing is $1/128$, and the right-hand side is computed by approximating the integral using a random sample (size 12k) from the domain.}
        \label{fig:gaussian}
\end{figure}

\begin{figure}
\centering
     \begin{subfigure}[b]{0.49\textwidth}
         \centering
         \includegraphics[width=\textwidth]{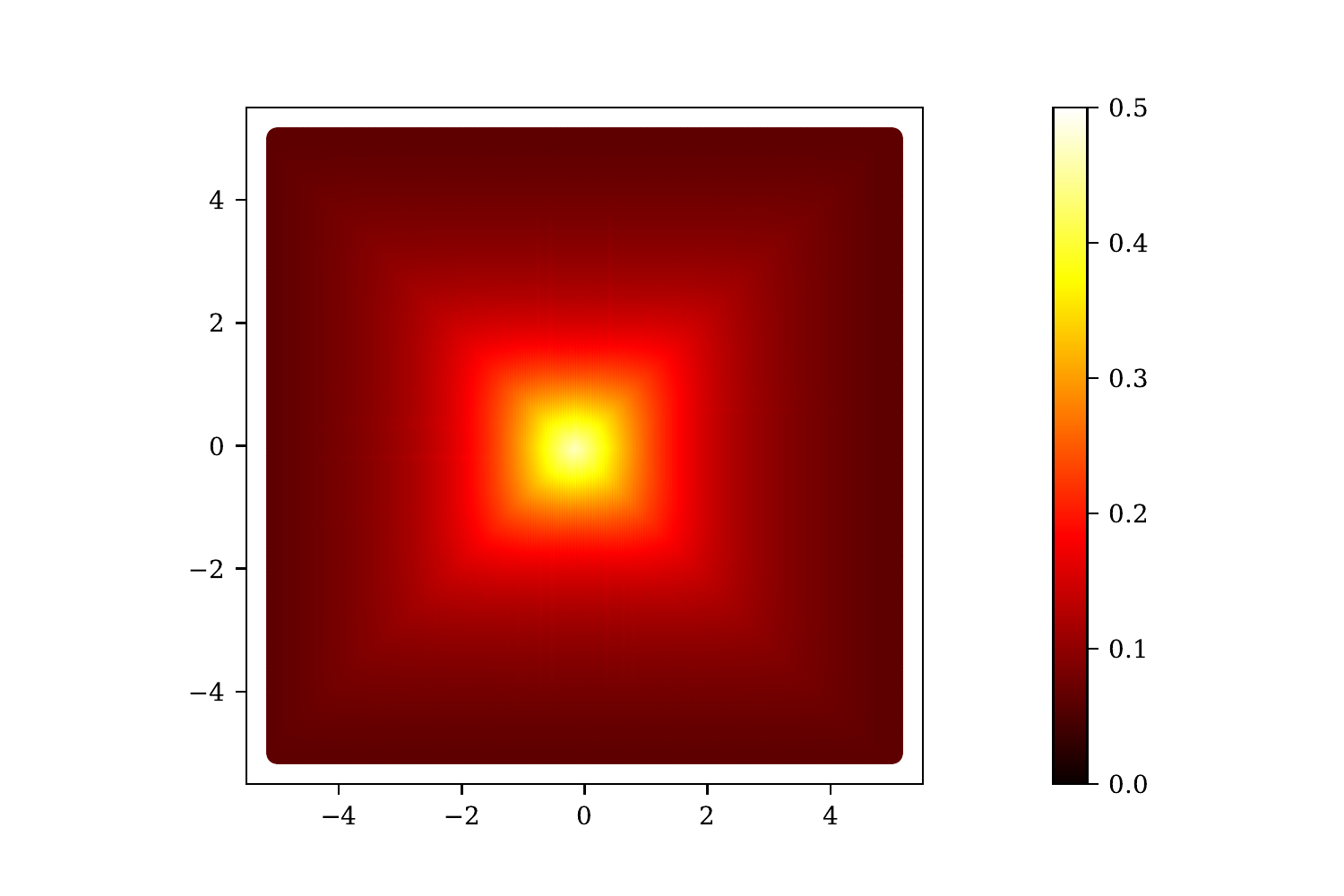}
         \caption{}
         \label{fig:cauchy-vals}
     \end{subfigure}
     \hfill
     \begin{subfigure}[b]{0.49\textwidth}
         \centering
         \includegraphics[width=\textwidth]{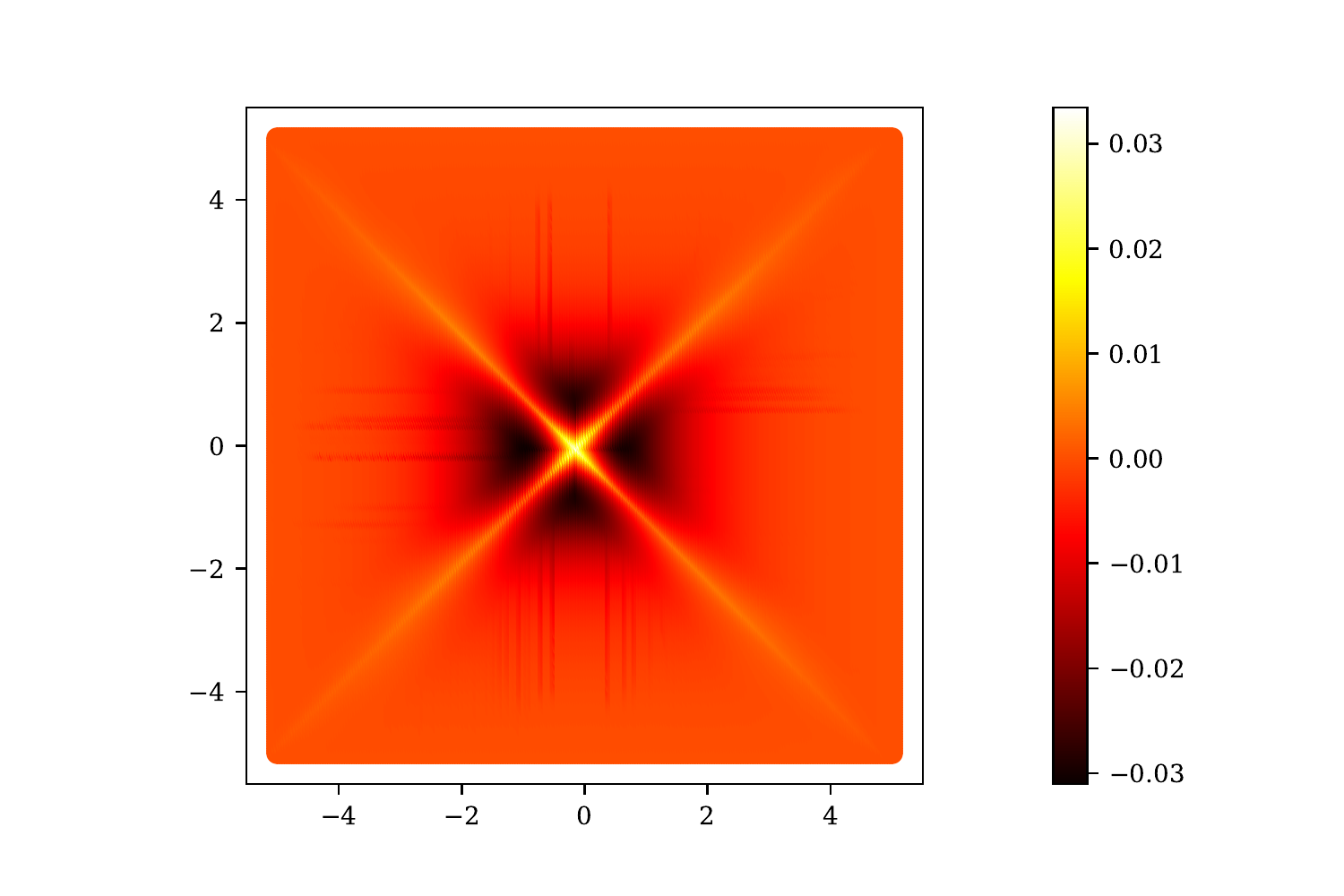}
         \caption{}
         \label{fig:cauchy-errors}
     \end{subfigure}
     \\
     \begin{subfigure}[b]{0.49\textwidth}
         \centering
         \includegraphics[width=\textwidth]{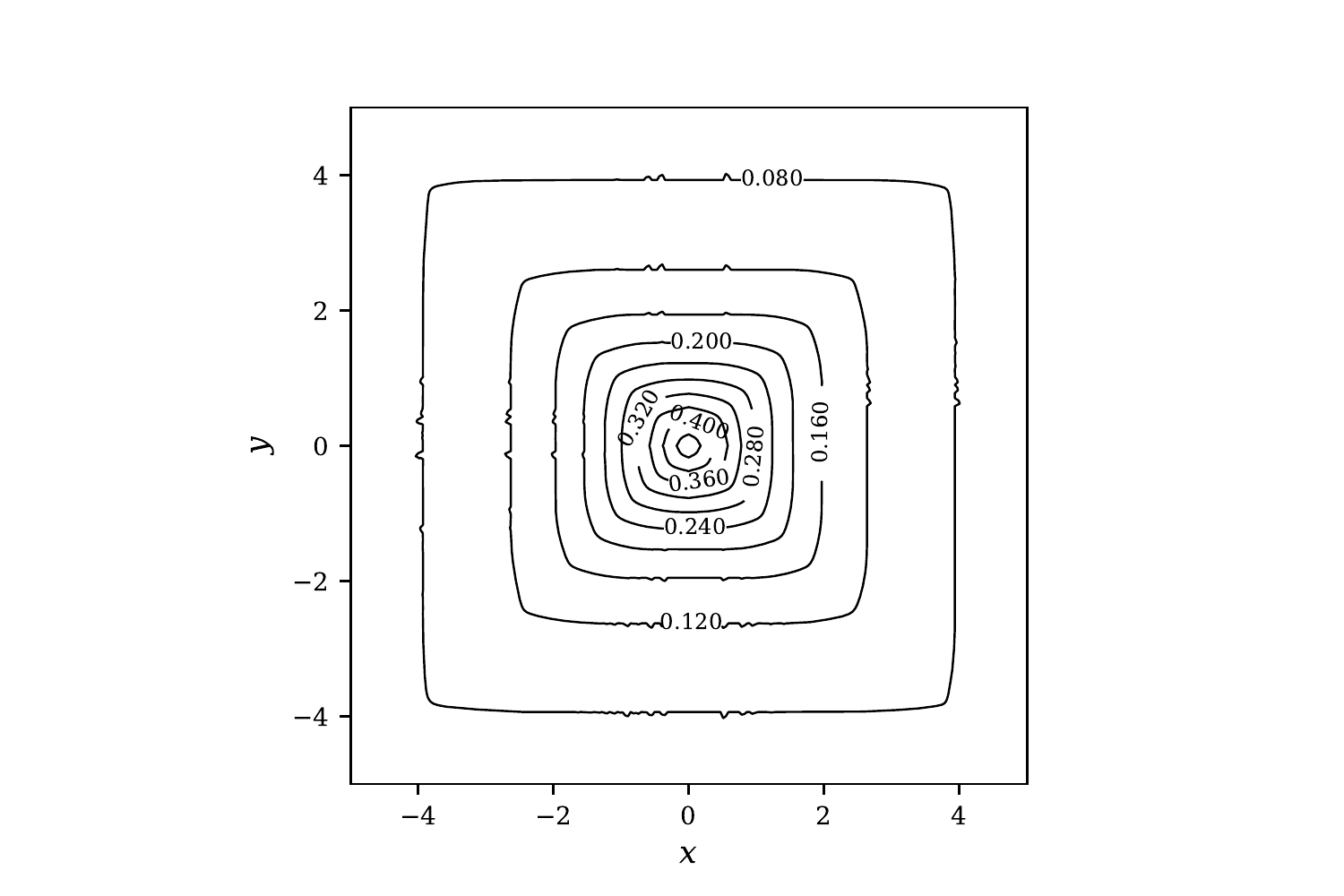}
         \caption{}
         \label{fig:cauchy-calc-contours}
     \end{subfigure}
     \hfill
     \begin{subfigure}[b]{0.49\textwidth}
         \centering
         \includegraphics[width=\textwidth]{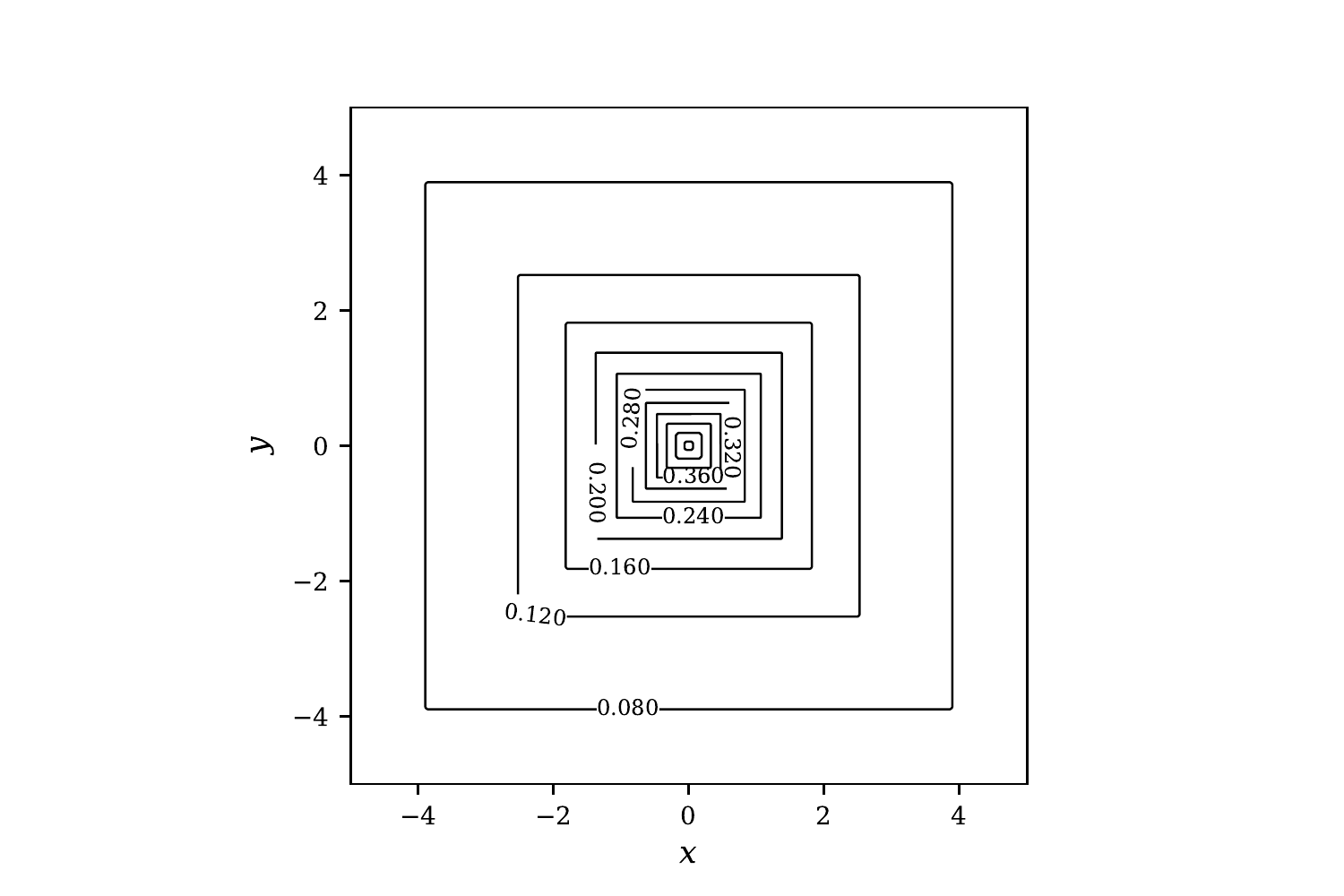}
         \caption{}
         \label{fig:cauchy-actual-contours}
     \end{subfigure}
     \\
     \begin{subfigure}[b]{0.49\textwidth}
     \centering
         \includegraphics[width=\textwidth]{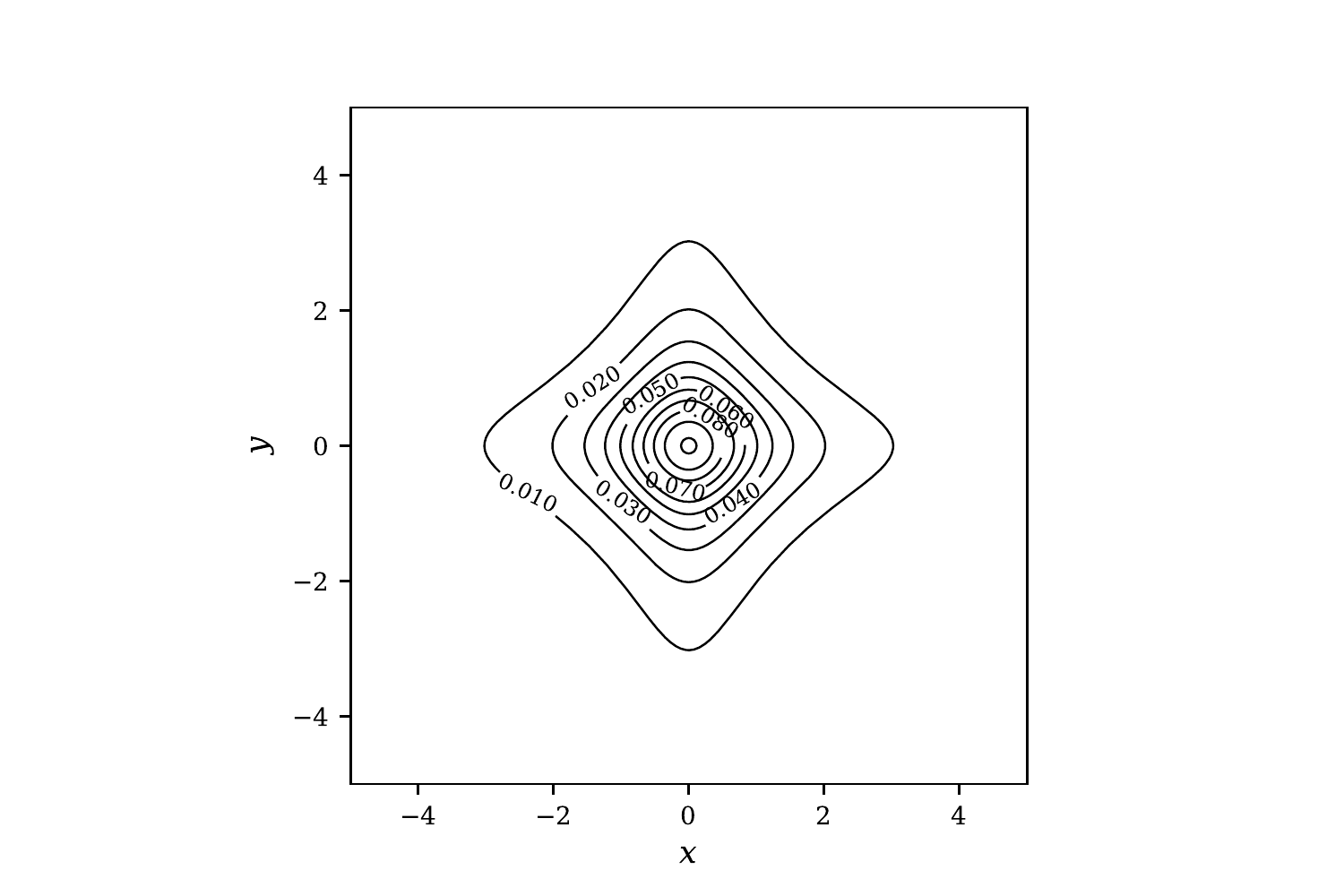}
         \caption{}
         \label{fig:cauchy-density-contours}
     \end{subfigure}
        \caption{Approximation of the Tukey depth using Equation \eqref{eqn:TukeyEqn}, for a bivariate Cauchy distribution with scale parameter 1. Figure \ref{fig:cauchy-vals} shows the computed values, Figure \ref{fig:cauchy-errors} shows the difference with the exact solution, Figure \ref{fig:cauchy-calc-contours} shows the level sets of the computed values and Figure \ref{fig:cauchy-actual-contours} shows the level sets of the true contours. Here the (square) grid spacing is $10/256$, and the right-hand side is computed by approximating the integral using a random sample (size 10k) from the domain. In Figure \ref{fig:cauchy-density-contours} we also provide the contours of the density function, which are not convex, as a point of reference.}
        \label{fig:cauchy}
\end{figure}

\clearpage
\bibliographystyle{plain}
\bibliography{mybib}

\end{document}